\setlist[itemize]{noitemsep}
\setlist[enumerate]{noitemsep}
\numberwithin{equation}{section}
\definecolor{forest}{rgb}{0.16, 0.5, 0.0}
\newcommand{\Red}[1]{{\color{red}{#1}}}
\newcommand{\Blue}[1]{{\color{blue}{#1}}}
\newcommand{\clo}[2][]{%
    \ifthenelse{ \equal{#1}{} }
        {\cl(#2)}
        {\cl_{#1}(#2)}
}
\newcommand{\coveredby}{\lessdot}
\newcommand{\fld}{\Bbbk}
\newcommand{\x}{\times}
\newcommand{\bd}{\partial}
\newcommand{\isom}{\cong}
\newcommand{\0}{\emptyset}
\newcommand{\sm}{\setminus}
\newcommand{\Sym}{\mathfrak{S}}
\newcommand{\st}{\colon} 
\newcommand{\Cc}{\mathbb{C}}
\newcommand{\Ff}{\mathbb{F}}
\newcommand{\Nn}{\mathbb{N}}
\newcommand{\Rr}{\mathbb{R}}
\newcommand{\bb}{\mathbf{b}}
\newcommand{\cc}{\mathbf{c}}
\newcommand{\ee}{\mathbf{e}}
\newcommand{\vv}{\mathbf{v}}
\newcommand{\xx}{\mathbf{x}}
\newcommand{\yy}{\mathbf{y}}
\newcommand{\XX}{\mathcal{X}} 
\newcommand{\YY}{\mathcal{Y}}
\newcommand{\B}{\mathcal{B}}
\newcommand{\D}{\mathcal{D}}
\newcommand{\F}{\mathcal{F}}
\newcommand{\K}{\mathcal{K}}
\renewcommand{\L}{\mathcal{L}}
\newcommand{\N}{\mathcal{N}}
\renewcommand{\P}{\mathcal{P}}
\newcommand{\ccc}{\mathfrak c} 
\newcommand{\ppp}{\mathfrak p}
\newcommand{\qqq}{\mathfrak q}
\newcommand{\sss}{\mathfrak s}
\newcommand{\BBB}{\mathfrak B} 
\newcommand{\AAA}{\mathfrak A} 
\newcommand{\III}{\mathfrak I} 
\DeclareMathOperator{\Atom}{Atom} 
\DeclareMathOperator{\cl}{\mathsf{cl}}  
\DeclareMathOperator{\codim}{codim}
\DeclareMathOperator{\conv}{conv}
\DeclareMathOperator{\Gr}{Gr}
\DeclareMathOperator{\Irr}{Irr}
\DeclareMathOperator{\Le}{Le} 
\DeclareMathOperator{\Ord}{Ord} 
\DeclareMathOperator{\rk}{rk}
\DeclareMathOperator{\rec}{rec} 
\newcommand{\llex}{<_{\text{lex}}}
\newcommand{\lgale}{<_{\text{gale}}}
\newcommand{\legale}{\le_{\text{gale}}}
\newcommand{\defterm}[1]{\textbf{#1}}
\newcommand{\supn}{\sup\nolimits}
\newcommand{\infn}{\inf\nolimits}
\newtheorem{thm}{Theorem}
\numberwithin{thm}{section}
\newtheorem{prop}[thm]{Proposition}
\newtheorem{cor}[thm]{Corollary}
\newtheorem{problem}[thm]{Problem}
\theoremstyle{definition}
\newtheorem{defn}[thm]{Definition}
\newtheorem{example}[thm]{Example}
\newtheorem{remark}[thm]{Remark}
\theoremstyle{remark}
\newtheorem{qn}[thm]{Question}
\author[JB]{Jonah Berggren}
\address{Department of Mathematics, University of Kentucky, Lexington, KY, United States}
\email{jrberggren@uky.edu}
\author[JLM]{Jeremy L.\ Martin}
\address{Department of Mathematics, University of Kansas, Lawrence, KS, United States}
\email{jlmartin@ku.edu}
\author[JAS]{Jos\'e A.\ Samper}
\address{Facultad de Matem\'aticas, Pontificia Universidad Cat\'olica, Santiago, Chile}
\email{jsamper@uc.cl}
\title{Unbounded Matroids}
\date{\today}
\keywords{Matroid, matroid base polytope, polyhedron, submodular system, shellable, subspace arrangement}
\thanks{JLM was supported in part by Simons Foundation Collaboration Grant \#315347 and by an award from the University of Kansas International Affairs Latin America Fund.  JAS was partially supported by FONDECYT Iniciaci\'on grant \#11221076.}
\thanks{Data availability statement: Not applicable to this article as no datasets were generated or analyzed during the current study.}
\subjclass[2020]{
05B35, 
52B40, 
52C35 
}
\begin{document}
\maketitle

\begin{abstract}
A matroid base polytope is a polytope in which each vertex has 0,1 coordinates and each edge is parallel to a difference of two coordinate vectors.  Matroid base polytopes are described combinatorially by integral submodular functions on a boolean lattice, satisfying the unit increase property.  We define a more general class of \textit{unbounded matroids}, or \textit{U-matroids}, by replacing the boolean lattice with an arbitrary distributive lattice.  U-matroids thus serve as a combinatorial model for polyhedra that satisfy the vertex and edge conditions of matroid base polytopes, but may be unbounded.  Like polymatroids, U-matroids generalize matroids and arise as a special case of submodular systems.  We prove that every U-matroid admits a canonical largest extension to a matroid, which we call the \textit{generous extension}; the analogous geometric statement is that every U-matroid base polyhedron contains a unique largest matroid base polytope.  We show that the supports of vertices of a U-matroid base polyhedron span a shellable simplicial complex, and we characterize U-matroid basis systems in terms of shelling orders, generalizing Bj\"orner's and Gale's criteria for a simplicial complex to be a matroid independence complex.  Finally, we present an application of our theory to subspace arrangements and show that the generous extension has a natural geometric interpretation in this setting.
\end{abstract}

\section{Introduction}

A \defterm{matroid} is a combinatorial structure that abstracts the idea of linear independence.  As such, matroids have bases, independent sets, spanning sets, and a wealth of other equivalent combinatorial axiomatizations; the equivalences between these axiomatizations were dubbed ``cryptomorphisms'' by Birkhoff, with the name and idea popularized by Gian-Carlo Rota.  Here we focus on the geometric axiomatization of a matroid as its \defterm{base polytope}, the convex hull of the characteristic vectors of its bases.  A famous theorem of Gel'fand, Goresky, Macpherson, and Serganova \cite{GGMS} states that a polytope $\ppp$ is a matroid base polytope if and only if (a) every vertex has 0,1-coordinates and (b) every edge is parallel to the difference of two standard basis vectors (equivalently, the normal fan of $\ppp$ coarsens the braid fan).  More generally, the polytopes satisfying condition (b), the so-called \defterm{generalized permutahedra} studied by Postnikov \cite{Postnikov}, have received considerable recent attention; see, e.g., \cite{FS,PRW,AA}.  Essentially equivalent but earlier definitions are the \defterm{polymatroids} of Edmonds \cite{Edmonds}, and the \defterm{M-convex sets} of Murota \cite{Murota}.
Even more generally, possibly-unbounded polyhedra satisfying condition (b) (called ``extended generalized permutahedra'' in \cite{AA}) correspond to \defterm{submodular systems}, which have been studied extensively in combinatorial optimization; see \cite{Fujishige}. 

This paper initiates the study of \defterm{unbounded matroid polyhedra}, which are generalized permutahedra $\ppp\subset\Rr^n$ all of whose vertices have 0,1-coordinates, but which need not be bounded.  The recession cone (which describes the unbounded directions) can be recorded by a distributive sublattice $\D\subseteq2^E$, the \defterm{characteristic lattice} of the unbounded matroid.  Specifically, the maximal chains in~$\D$ correspond to the chambers of the braid arrangement containing linear functionals bounded on $\ppp$, so that $\ppp$ is a polytope if and only if $\D=2^E$.  On the combinatorial side, we define an \defterm{unbounded matroid} (or \defterm{U-matroid}) as a triple $U=(E,\D,\rho)$, where $E$ is a finite ground set; $\D\subseteq 2^E$ is a distributive lattice; and $\rho:\D\to\Nn_{\geq0}$ satisfies the usual axioms of a matroid rank function, including submodularity.  By Birkhoff's theorem, $\D=J(\P)$ is the lattice of order ideals of some unique poset $\P$ on $E$, so $U$ may be regarded equivalently as a structure over $\D$ or over $\P$.  When $\D=2^E$, we recover the definition of a matroid.  Thus unbounded matroids and their base polyhedra fit into the framework of submodular systems, joining polymatroids as a special kind of submodular system that includes matroids as a subclass.  The relationships between all these objects, in their combinatorial and geometric incarnations, is shown in Figure~\ref{fig:cube}.

\begin{figure}
\begin{center}
\newcommand{\Ux}{2.5} \newcommand{\Uy}{1.5} 
\newcommand{\Gx}{8.0} \newcommand{\Gy}{0.0} 
\newcommand{\Zx}{0.0} \newcommand{\Zy}{3.0} 
\begin{tikzpicture}
\coordinate (Mat) at (0,0);
\coordinate (UMat) at (\Ux,\Uy);
\coordinate (PM) at (\Zx,\Zy);
\coordinate (SMS) at (\Ux+\Zx,\Uy+\Zy);
\coordinate (MBP) at (\Gx,\Gy);
\coordinate (UMBP) at (\Ux+\Gx,\Uy+\Gy);
\coordinate (GP) at (\Gx+\Zx,\Gy+\Zy);
\coordinate (UGP) at (\Ux+\Gx+\Zx,\Uy+\Gy+\Zy);
\draw (Mat)--(UMat)--(SMS)--(PM)--cycle;
\draw (MBP)--(UMBP)--(UGP)--(GP)--cycle;
\draw (Mat)--(MBP) (UMBP)--(UMat) (SMS)--(UGP) (PM)--(GP);
\node[fill=white] at (Mat) {Matroids};
\node[red,fill=white] at (UMat) {U-matroids};
\node[fill=white] at (PM) {Polymatroids};
\node[fill=white] at (SMS) {Submodular systems};
\node[fill=white, align=center] at (MBP) {Matroid base\\ polytopes};
\node[red,fill=white, align=center] at (UMBP) {Unbounded matroid\\ base polyhedra};
\node[fill=white, align=center] at (GP) {Generalized\\ permutahedra};
\node[fill=white, align=center] at (UGP) {Extended generalized\\ permutahedra};
\end{tikzpicture}
\end{center}
\caption{Matroids, matroid base polytopes, and their generalizations.  Horizontal lines are bijections, the rest inclusions.  The sides of the cube have these meanings: left/right = combinatorial/geometric; bottom/top = integral/real; front/back = bounded/possibly unbounded.\label{fig:cube}}
\end{figure}
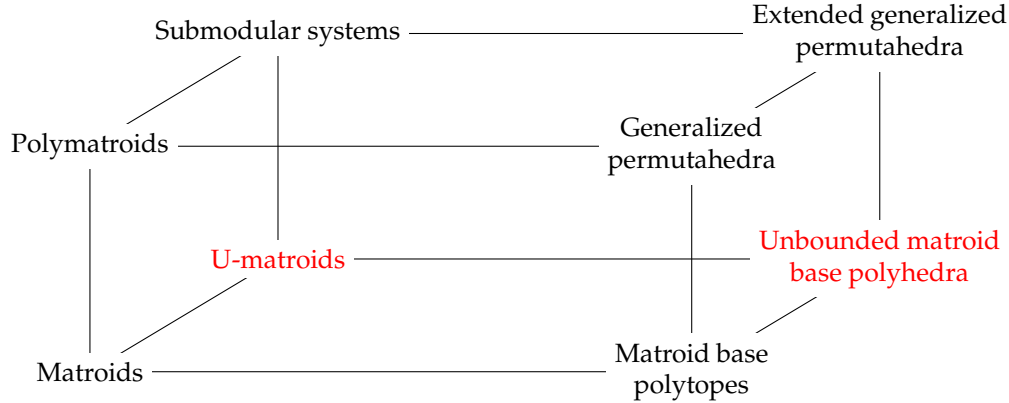

For this reason, the study of unbounded matroids addresses a question proposed by Gian-Carlo Rota: how does one define a matroid over a poset?  Answers were proposed by, among others, Nguyen \cite{Nguyen}, Faigle \cite{Faigle}, Korte--Lov\'asz~\cite{KorteLovasz}, and Barnabei--Nicoletti--Pezzoli \cite{Barnabei}.  In general these authors followed a purely combinatorial approach, focusing on generalizing the core matroid axiomatizations (e.g., bases, independent sets, circuits, rank function, greedy algorithm) and establishing cryptomorphisms between them.  There are often multiple choices for the definitions that are reasonable from a combinatorial viewpoint: for instance, \cite{Faigle} studies two distinct generalizations of matroid independence over a partial order.
For us, polyhedral geometry plays just as important a role as pure combinatorics: the geometry of the base polyhedron points the way to defining the right analogues of matroid axiomatizations such as bases.  Specifically:

\begin{enumerate}
\item Previous authors have generally regarded the partial order $\P$ underlying a generalized matroid as fixed.  By contrast, we want to understand what it means to add or subtract relations from $\P$ (which, here, is the poset of join-irreducibles in the characteristic lattice~$\D$).  This question is natural from a geometric point of view: we will see that containment between U-matroid polyhedra corresponds to restriction and extension of rank functions.  Moreover, we will prove that every U-matroid can be ``generously'' extended to a matroid in a canonical way, corresponding to the unique largest matroid polytope contained in its base polyhedron.  In particular, every unbounded matroid is the restriction of a classical matroid (not necessarily unique) to a poset.

\item We define the bases of a U-matroid as the supports of the vertices of the polyhedron.  The combinatorial definition of bases in terms of the rank function, due to Fujishige in the setting of submodular systems, is more subtle than ``minimal sets of maximal rank''.  One justification for our choice is that the simplicial complex generated by the bases is shellable (and indeed the proof relies crucially on geometry), and we are able to characterize such complexes exactly, as follows.

\item Total orderings on the ground set play a vital role in matroid theory.  By theorems of Bj\"orner \cite[Thm.~7.3.4]{Bjorner-shellability} and Gale~\cite{Gale} respectively, a pure simplicial complex is a matroid complex if and only if (i) for every ordering on the ground set, the corresponding lexicographic order on facets is a shelling order, and if and only if (ii) for every total ordering on the ground set, there is a unique minimal basis with respect to the Gale (or product) order.  We obtain exact analogues of Bj\"orner's and Gale's theorems in which the relevant orderings are the maximal chains in~$\D$.
\end{enumerate}

One application of U-matroids is to linear subspace arrangements~\cite{Bjorner-subspace}.  By way of background, the central combinatorial invariant of an arrangement $\XX$ is its (polymatroid) rank function, which specifies the codimension of every intersection of its spaces.  By work of Goresky and Macpherson~\cite{GM}, this data determines the homology groups of the complement of~$\XX$; it is a major open problem to describe the cohomology ring of the complement combinatorially.  
Barnabei, Nicoletti and Pezzoli~\cite{Barnabei} recorded the rank function of an arrangement by a \defterm{poset matroid}
(which we show is in fact a special kind of U-matroid), and Crowley, Huh, Larson, Simpson and Wang~\cite{Crowley} by a \defterm{multisymmetric matroid}.  Our combinatorial operation of generous extension corresponds to a natural operation on $\XX$: replacing one of its spaces $X$ with generically chosen spaces whose intersection is~$X$.

Unbounded polyhedra have proven to be valuable objects of study even if one's main objects of study are bounded polytopes.  Notable examples include the theory of matroid and polymatroid valuations \cite{DerksenFink,JocSan}, lattice point enumeration \cite{Brion}, and subword complexes of Coxeter systems \cite{Stump}.  Our goal is to understand matroids by viewing them as a special class of unbounded matroids.

Having given this higher-level overview, we now describe the structure of the article.

Section~\ref{sec:background} reviews the basic objects we will need: posets and lattices, polyhedra and generalized permutahedra, and submodular systems.

In Section~\ref{sec:UM}, we introduce unbounded matroids both geometrically (as 0/1 generalized permutahedra) and combinatorially (as a special case of submodular systems).  We include examples to illustrate how these new objects are both like and unlike matroids; a central example is a U-matroid we call the \defterm{stalactite} (Example~\ref{ex:stalactite}).

Section~\ref{sec:sheared} studies relationships between U-matroid polyhedra $\ppp,\ppp'$.  By the theory of submodular systems restricting the underlying lattice of a submodular system enlarges the base polyhedron, but can remove vertices.  Accordingly, we say that $\ppp'$ is a \defterm{sheared polyhedron} of $\ppp$ if $\ppp'\subseteq\ppp$, but the vertices of~$\ppp'$ contain those of~$\ppp$.
Indeed, the sheared polyhedra of~$\ppp$ correspond exactly to extensions of its rank function  (Theorem~\ref{thm:sheared-correspondence}).  When $U=(E,\D,\rho)$ is a U-matroid and $\D'$ is a distributive lattice obtained from $\D$ by adjoining a single atom, we construct an explicit extension of $\rho$ to $\D'$, which we call the \defterm{generous atom extension} (Definition~\ref{defn:generous-atom} and Theorem~\ref{thm:Dextend}).  By iterating this construction, we can extend U-matroid rank functions to any distributive lattice $\D''\supseteq\D$, and the U-matroid rank function thus obtained dominates every other extension of~$\rho$ to~$\D''$ (Theorem~\ref{generous-dominates-full}).  In particular, there is a canonical extension of $U$ to a matroid on the same ground set, corresponding to the largest matroid polytope contained in the base polyhedron of~$U$.

In Section~\ref{sec:basis}, we study the \defterm{pseudo-independence complex} of a U-matroid $U$, defined as the simplicial complex $\Delta(U)$ generated by its bases.  We first show (Theorem~\ref{thm:shelling}) that it is shellable, using the following geometric argument: first reduce the base polyhedron $\ppp$ to a polytope~$\qqq$ by slicing with an appropriately chosen hyperplane, then show that any line shelling of~$\qqq$ that starts with the hyperplane facet gives rise to a shelling of $\ppp$.  A consequence of the proof is that any linear functional lying in the braid cone of a bounded direction of $\ppp$ (that is, indexed by a maximal chain in the characteristic lattice) gives rise to a shelling.  This observation enables us to generalize Bj\"orner's and Gale's characterizations of matroid complexes to U-matroids (Theorem~\ref{thm:basis-theorem}).  Essentially, one need only replace ``all orderings of the ground set'' in Bj\"orner's and Gale's theorems with ``all orderings arising from maximal chains in the characteristic lattice''.

In Section~\ref{sec:dual}, we describe duality for U-matroids, which parallels matroid theory very closely.  The definitions of duality in terms of the base polyhedron, basis systems, and rank functions go through with essentially no change, and duality behaves as expected with respect to restriction of the characteristic lattice.  (In contrast, geometric and combinatorial duality are not mutually compatible for polymatroids.)

Section~\ref{sec:PMatroid} studies the application of unbounded matroids to subspace arrangements.  We first review the poset matroids introduced by Barnabei et al.~\cite{Barnabei}, and show that they are precisely those U-matroids for which every basis belongs to the characteristic lattice.  We then review the theory of subspace arrangements and describe how their (polymatroid) rank functions are equivalent both to the poset matroids of~\cite{Barnabei} and to the multisymmetric matroids of Crowley et al.~\cite{Crowley}.  The main result of this section (Theorem~\ref{generous-geometry}) states that generous atom extension corresponds to the following operation on an arrangement: replace one of its spaces~$X$ by a pair $(X',H)$, chosen generically with $\codim X'=\codim X-1$, $\codim H=1$, and $X=X'\cap H$.

We close by listing several problems for future study in Section~\ref{sec:further}.

\textbf{Acknowledgements.}  JLM thanks the Departamento de Matem\'aticas at Pontificia Universidad Cat\'olica de Chile for their hospitality during his sabbatical in July--December 2023, during which this work was completed.

\section{Background: Lattices and submodular systems} \label{sec:background}

The symbol $[n]$ denotes the set $\{1,\dots,n\}$, and $\Sym_n$ denotes the symmetric group of permutations of~$[n]$.  For a finite set $E$, the symbol $\binom{E}{r}$ means the set of $r$-element subsets of $E$.  

\subsection{Posets and lattices} \label{subsec:lattices}
We assume familiarity with the basic theory of posets and lattices, particularly distributive and semimodular lattices; see, e.g., \cite[Chapter~II]{Aigner} or~\cite[\S\S3.3,3.4]{EC1}.

Throughout, let $E$ be a finite set of cardinality $n$.
The symbol $\Ord(E)$ denotes the set of bijections $\sigma:[n]\to E$, which we regard as total orders of $E$ via $\sigma(1)<\cdots<\sigma(n)$.
For a poset $\P$ on ground set $E$, let $\Le(\P)\subseteq \Ord(E)$ denote the set of linear extensions of $P$.  The dual poset $\P^*$ is defined by reversing all order relations in $\P$.

Each total ordering $\sigma\in\Ord(E)$ gives rise to two orders on the set $\binom{E}{r}$, as follows.
Let $A,B\in\binom{E}{r}$, with $A=\{a_1<\cdots<a_r\}$ and $B=\{b_1<\cdots<b_r\}$.
The \defterm{Gale order} (or \defterm{componentwise order}) is defined by $A\legale B$ if $a_i\leq b_i$ for all $i$; in general this is not a total order.
The \defterm{lexicographic order} is defined by $A\llex B$ if, for some $k$, we have $a_1=b_1$, $a_2=b_2$, \dots, $a_{k-1}=b_{k-1}$, $a_k<b_k$.
Lexicographic order is a total order and is a linear extension of Gale order, i.e., $A\llex B$ whenever $A\lgale B$.  If we wish to specify the underlying ordering $\sigma$, we may write $A\llex^\sigma B$, $A\legale^\sigma B$, etc.

All distributive lattices we will consider are subsets $\D\subseteq 2^E$ that contain $\0$ and $E$ and are closed under union and intersection, which give the join and meet operations respectively.  In addition, we always assume that $\D$ is \defterm{accessible}\footnote{Fujishige uses ``simple''.}: for every nonempty $A\in\D$, there exists $x\in A$ such that $A-x\in\D$.  (It is equivalent to require that every covering relation in $\D$ be of this form, or that some (or every) maximal chain in $\D$ has length $|E|$.)

The poset $\Irr(\D)$ of join-irreducibles of an accessible distributive lattice $\D\subseteq2^E$ induces a poset on $E$, by associating each element of $E$ with the unique smallest join-irreducible element of $\D$ that contains it.  For example, if $E=[3]$ and $\D$ is the distributive lattice $\{\0,1,2,12,23,123\}$ with join-irreducibles $\{1,2,23\}$, then we take $\Irr(\D)$ to be the poset on $E$ with the single relation $2<3$.

For every accessible lattice $\D\subseteq2^E$ and every $A\subseteq E$, we define
\[\supn_\D(A)=\bigcap\{B\in\D\st B\supseteq A\} \quad\text{and}\quad
\infn_\D(A)=\bigcup\{B\in\D\st B\subseteq A\}.\]
These are respectively the unique largest and unique smallest elements of $\D$ containing $A$.  Observe that
\begin{equation} \label{sup-facts}
\supn_\D(A\cup B)=\supn_\D(A)\cup\supn_\D(B) \quad\text{and}\quad
\supn_\D(A\cap B)\subseteq\supn_\D(A)\cap\supn_\D(B)
\end{equation}
although equality need not hold in the second containment: for example, take $\D=\{\0,a,ab\}$, $A=\{a\}$, $B=\{b\}$.

\subsection{Polyhedra and generalized permutahedra} \label{sec:polyhedra}

We assume familiarity with basic facts about polyhedra; see, e.g., \cite{Grunbaum,Ziegler}.  The symbol $V(\ppp)$ denotes the set of vertices of a polyhedron $\ppp$.  The (polyhedral) \defterm{cone} $\ccc=\Rr_{\geq0}\langle S\rangle$ generated by a finite set $S\subseteq\Rr^n$ is by definition the set of all nonnegative linear combinations of vectors in $S$.
The \defterm{dual} of $\ccc$ is the cone
$\ccc^*=\{\ell\in(\Rr^n)^*\st \ell(\xx)\geq 0\ \ \forall\xx\in \ccc\}.$
For each face $F$ of a polyhedron $\ppp\subseteq\Rr^n$, the (closed) \defterm{normal cone} is\footnote{It is more usual to define $N_\ppp(F)$ as the set of linear functionals maximized on $F$, rather than minimized.  We adopt this convention for compatibility with \cite{Fujishige}.}
\[N_\ppp(F) = \{\ell\in(\Rr^n)^* \st \ell(\xx)\leq\ell(\yy) \ \ \forall \xx\in F,\ \yy\in \ppp\}.\]
The normal cones are the faces of a polyhedral complex, the \defterm{normal fan} $\N(\ppp)$.  We write $|\N(\ppp)|$ for the union of the normal cones, that is, the cone of all linear functionals that are bounded from below on~$\ppp$.  In particular, $|\N(\ppp)|=(\Rr^n)^*$ if and only if $\ppp$ is bounded.  The \defterm{recession cone} (or \defterm{characteristic cone}) of $\ppp$ is
\[\rec(\ppp)=|\N(\ppp)|^*=\{\yy\in \Rr^n\st \xx+\yy\in \ppp \ \ \forall \xx\in \ppp\}\]
\cite[\S2.1]{CrawMaclagan}.
Informally, $\rec(\ppp)$ consists of ``all the directions in which $\ppp$ is unbounded.''  If $\ppp$ is bounded then $\rec(\ppp)=\{0\}$; otherwise, $\rec(\ppp)$ is unbounded.  Every polyhedron $\ppp$ can be expressed as a Minkowski sum $\ppp=\ppp'+\rec(\ppp)$, where $\ppp'$ is a polytope; this decomposition is unique if and only if $\ppp$ itself is a polytope.

Recall that the \defterm{braid arrangement} consists of the $\binom{n}{2}$ hyperplanes in $\Rr^n$ defined by the equations $x_i=x_j$ for $1\leq i<j\leq n$.  The faces of the braid arrangement are cones indexed by total preorders on $[n]$ (i.e., complete relations that are reflexive and transitive).
Specifically, a preorder $\preceq$ corresponds to the cone $\ccc_\preceq$ defined by
\[\ccc_\preceq = \{\xx=(x_1,\dots,x_n)\in\Rr^n \st x_i\leq x_j \iff i\preceq j\}.\]
In particular, the maximal cones correspond to permutations $\sigma\in\Sym_n$, where a permutation $\sigma\in\Sym_n$ is associated to the preorder $\preceq_\sigma$ such that $i\preceq_\sigma j\iff\sigma(i)\leq\sigma(j)$.

A polytope $\ppp$ is a \defterm{generalized permutahedron} \cite{Postnikov,PRW} if its normal fan is a coarsening of the braid fan.  That is, if we identify $(\Rr^n)^*$ with $\Rr^n$ by the standard dot product, then for every linear functional $\ell=(\ell_1,\dots,\ell_n)\in(\Rr^n)^*$, the face of $\ppp$ on which $\ell$ is minimized depends only on the relative order of the scalars $\ell_i$, i.e., on the cone of the braid arrangement in $(\Rr^n)^*$ containing $\ell$.
Equivalently, $\ppp$ is a generalized permutahedron if and only if each of its edges is parallel to the difference of two standard basis vectors.

An \defterm{extended generalized permutahedron} is a polyhedron~$\ppp$ whose normal fan is a coarsening of a subfan of the braid fan: that is,  for any linear functional $\ell\in(\Rr^n)^*$ \textit{that is bounded from below on $\ppp$}, the face of~$\ppp$ minimized by $\ell$ depends only on the braid cone containing it.  Equivalently, every 1-dimensional face (both bounded edges and unbounded rays) is parallel to the difference of two standard basis vectors.

\subsection{Submodular systems} \label{sec:submodular}

\begin{defn}[\textbf{Submodular systems}] \label{def:submodsyst} \cite[pp.33--34]{Fujishige}
A \defterm{submodular system} is a triple
$S=(E,\D,\rho)$, where $E$ is a finite ground set (typically $[n]$), $\D\subseteq2^E$ is an accessible lattice, and $\rho:\D\to\Rr$ is a submodular \textit{rank function} satisfying the following additional properties for all $A,B\in\D$:
\begin{align}
& \rho(\0)=0  && \textup{(calibration);}\label{calibration}\\
& A\subseteq B\implies\rho(A)\leq\rho(B) && \textup{(monotonicity);} \label{monotonicity}\\
& \rho(A)+\rho(B)\geq \rho(A\cup B)+\rho(A\cap B) && \textup{(submodularity).} \label{submodularity}
\end{align}
\end{defn}

Monotonicity is absent from the definition in \cite{Fujishige}, but it is natural in a combinatorial context.

A \defterm{polymatroid} \cite[Defn.~3, p.338]{Welsh}; \cite[\S2.2, pp.25--26]{Fujishige} is a submodular system for which $\D=2^E$.  In the further special case that $\rho(A)\in\Nn$ and $\rho(A)\leq|A|$ for all $A$, we recover the definition of the rank function of a matroid.

For a vector $\xx=(x_1,\dots,x_n)\in\Rr^n$ and a set $A\subseteq[n]$, define
$\xx(A)=\sum_{a\in A} x_i.$
The \defterm{submodular polyhedron} and the \defterm{base polyhedron} of a submodular system $S=([n],\D,\rho)$ are respectively
\begin{align}
\AAA(S) &= \left\{\xx\in\Rr^n \st \xx(A)\leq\rho(A) \ \forall A\in\D\right\}, \label{define-submod-poly} \\
\BBB(S) &= \left\{\xx\in \AAA(S) \st \xx(E)=\rho(E)\right\}.\label{define-base-poly}
\end{align}
We will primarily be interested in the base polyhedron $\BBB(S)$, which is both a face of $\AAA(S)$ and an extended generalized permutahedron.   The correspondence between submodular systems and extended generalized permutahedra is in fact a bijection \cite[Thm.~3.1.6]{AA}, and much of the geometry of $\BBB(S)$ can be described explicitly in terms of $S$ \cite[\S2.3]{Fujishige}.

\begin{remark}
When $S$ is a matroid, the base polyhedron $\BBB(S)$ is a polytope, namely the convex hull of the characteristic vectors of its bases (the usual definition of the base polytope of a matroid).  On the other hand, the submodular polyhedron $\AAA(S)$ is still unbounded (its recession cone contains the negative orthant).  In particular, it does not coincide with the independence polytope $\III(S)$, defined as the convex hull of the characteristic vectors of independent sets in $M$.  (But in fact $\III(S)=\AAA(S)\cap[0,1]^E$; see Theorem~\ref{generous-polytopes} below.)
\end{remark}

\begin{defn}
\label{def:bounded-direction}
Let $E=[n]$, let $\sigma=(\sigma(1),\dots,\sigma(n))\in\Sym_n$, let $S=(E,\D,\rho)$ be a submodular system, and let $\P=\Irr(\D)$.  We say that $\sigma$ is \defterm{bounded with respect to $S$} if the following equivalent conditions hold:
\begin{enumerate}
\item\label{bounded:min} The linear functional $\xx\mapsto\sum_i\sigma^{-1}(i)\xx_i$ achieves a minimum on $\BBB(S)$.  (Equivalently,
the braid cone $\ccc_{\sigma^{-1}}$ is contained in the normal fan of $\BBB(S)$.)
\item\label{bounded:linex} $\sigma\in\Le(\P)$, i.e., if $i<_\P j$ then $\sigma(i)<\sigma(j)$;
\item\label{bounded:init} For every $k\in[n-1]$, the initial segment $\{\sigma(1),\dots,\sigma(k)\}$ belongs to $\D$.
\end{enumerate}
\end{defn}

Here the equivalence of (\ref{bounded:min}) and (\ref{bounded:linex}) is \cite[Thm.~3.13]{Fujishige}, and the equivalence of (\ref{bounded:linex}) and (\ref{bounded:init}) is a consequence of Birkhoff's theorem (where the ground set of $\P$ is identified with $E$, as in Section~\ref{subsec:lattices}).
In particular, $\BBB(S)$ is bounded if and only if $S$ is a polymatroid (i.e., $\D=2^E$).  In general, the smaller the lattice $\D$, the larger the recession cone.

We now review some known facts about the geometry of $\BBB(S)$.

\begin{thm}[\textbf{Vertices of base polyhedra of submodular systems}] \label{thm:submod-vertex}
\cite[Thm.~3.22, p.67]{Fujishige}
Let $S=(E,\D,\rho)$ be a submodular system.  For each $\sigma\in\Ord(E)$ that is bounded with respect to $S$ (i.e., $\sigma\in\Le(\Irr(\D))$), define a point $\xx=\xx^\rho_\sigma=(x_1,\dots,x_n)\in\Rr^n$ by
\begin{equation} \label{x-from-sigma}
x_{\sigma(i)} = \rho(\{\sigma(1),\dots,\sigma(i)\})-
\rho(\{\sigma(1),\dots,\sigma(i-1)\}), \quad 1\leq i\leq n.
\end{equation}
Then the vertices of the base polyhedron $\BBB(S)$ are exactly the points $\xx^\rho_\sigma$ defined by~\eqref{x-from-sigma}.
\end{thm}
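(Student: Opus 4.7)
The plan is to read the formula \eqref{x-from-sigma} as the output of a greedy algorithm that minimizes the linear functional $\ell_\sigma(\xx)=\sum_j j\,\xx_{\sigma(j)}$ over $\BBB(S)$. Since $\sigma\in\Le(\Irr(\D))$, each initial segment $A_k=\{\sigma(1),\dots,\sigma(k)\}$ lies in $\D$ by Definition~\ref{def:bounded-direction}(\ref{bounded:init}), so the formula is well-defined. I would prove the theorem in three steps: (i)~$\xx^\rho_\sigma\in\BBB(S)$; (ii)~$\xx^\rho_\sigma$ is the unique minimizer of $\ell_\sigma$ on $\BBB(S)$, hence a vertex; (iii)~every vertex of $\BBB(S)$ arises this way.

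For step~(i), telescoping gives $\xx^\rho_\sigma(E)=\rho(E)$. To verify $\xx^\rho_\sigma(A)\leq\rho(A)$ for an arbitrary $A\in\D$, note that both $A_{k-1}$ and $A_k\cap A$ lie in $\D$ (by closure under intersection), so submodularity gives
\[\rho(A_k)+\rho(A_{k-1}\cap A)\ \geq\ \rho\bigl(A_{k-1}\cup(A_k\cap A)\bigr)+\rho\bigl(A_{k-1}\cap(A_k\cap A)\bigr).\]
When $\sigma(k)\in A$ the right-hand sets simplify to $A_k$ and $A_{k-1}\cap A$, yielding $\rho(A_k)-\rho(A_{k-1})\leq\rho(A_k\cap A)-\rho(A_{k-1}\cap A)$; when $\sigma(k)\notin A$ the right-hand difference vanishes. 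Summing over all $k$ telescopes the right side to $\rho(A)$ and bounds $\xx^\rho_\sigma(A)$ from above.

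For step~(ii), use the defining equation $\xx(E)=\rho(E)$ of $\BBB(S)$ to rewrite
\[\ell_\sigma(\xx)\ =\ n\rho(E)\ -\ \sum_{k=1}^{n-1}\xx(A_k).\]
Each summand $\xx(A_k)$ is bounded above by $\rho(A_k)$ on $\BBB(S)$, so $\ell_\sigma$ is bounded below by $n\rho(E)-\sum_{k=1}^{n-1}\rho(A_k)$, with equality iff $\xx(A_k)=\rho(A_k)$ for all $k$. By construction $\xx^\rho_\sigma$ attains all these equalities; together with $\xx(E)=\rho(E)$, these $n$ linearly independent constraints (the indicators of $A_1\subsetneq\cdots\subsetneq A_n$ form a triangular system) have a unique common solution, which is therefore the unique minimizer of $\ell_\sigma$ and a vertex of $\BBB(S)$.

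For step~(iii), let $v$ be a vertex of $\BBB(S)$ and choose $\ell$ in the relative interior of its normal cone. Because $\BBB(S)$ is an extended generalized permutahedron, this relative interior contains a full-dimensional braid chamber $\ccc_\tau$ for some $\tau\in\Sym_n$, so $\ell_\sigma$ (with $\sigma=\tau^{-1}$) and $\ell$ are minimized on the same face of $\BBB(S)$. Boundedness of $\ell_\sigma$ forces $\sigma\in\Le(\Irr(\D))$ by Definition~\ref{def:bounded-direction}, and step~(ii) identifies $v=\xx^\rho_\sigma$. The main obstacle is the submodularity computation in step~(i), which is the only place the axioms of $\rho$ are used in an essential way; the other steps are routine once the greedy viewpoint and the equivalent characterizations of ``bounded direction'' are in hand.
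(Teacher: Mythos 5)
The paper does not prove this theorem: it is quoted from Fujishige \cite[Thm.~3.22]{Fujishige}, so there is no internal proof to compare against. Your argument is the standard greedy/Edmonds proof of that result, and its architecture --- membership via a telescoping submodularity bound, uniqueness as the minimizer of $\ell_\sigma$ via Abel summation and the triangular system of tight constraints $\xx(A_1)=\rho(A_1),\dots,\xx(A_{n-1})=\rho(A_{n-1}),\xx(E)=\rho(E)$, and surjectivity via full-dimensional normal cones of vertices being unions of braid cones --- is sound and complete in outline.

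One step is garbled as written. In step (i) you correctly announce that submodularity is to be applied to the pair $A_{k-1}$ and $A_k\cap A$, but the displayed inequality has left-hand side $\rho(A_k)+\rho(A_{k-1}\cap A)$ instead of $\rho(A_{k-1})+\rho(A_k\cap A)$. When $\sigma(k)\in A$ the right-hand side simplifies to exactly $\rho(A_k)+\rho(A_{k-1}\cap A)$, so the inequality as displayed is the tautology $u\geq u$ and does not yield the claimed conclusion $\rho(A_k)-\rho(A_{k-1})\leq\rho(A_k\cap A)-\rho(A_{k-1}\cap A)$. With the left-hand side corrected the deduction is immediate and the rest of step (i) goes through (note also that $A_k\cap A$ and $A_{k-1}\cup(A_k\cap A)$ lie in $\D$ by closure under intersection and union, so all ranks appearing are defined). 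A smaller imprecision occurs in step (iii): the relative interior of the normal cone cannot contain a \emph{closed} full-dimensional braid chamber. What you want is that the $n$-dimensional normal cone of the vertex $v$ is a finite union of braid cones (by the extended generalized permutahedron property), hence contains some full-dimensional chamber $\ccc_{\sigma^{-1}}$; the interior of that chamber, and in particular the point $\ell_\sigma$, then lies in the interior of the normal cone, so $\ell_\sigma$ is minimized exactly at $v$, it is bounded below on $\BBB(S)$ (whence $\sigma\in\Le(\Irr(\D))$), and step (ii) forces $v=\xx^\rho_\sigma$.
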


Observe that monotonicity of $S$ implies that the vertices of the base polyhedron lie in the principal orthant in $\Rr^E$; in fact these conditions are equivalent \cite[Lemma~3.23]{Fujishige}.  Moreover, $\D$ is accessible if and only if the base polyhedron $\BBB(S)$ has at least one vertex \cite[Thm.~3.11]{Fujishige}.

The inequalities $\xx(A)\leq\rho(A)$ are in fact tight on the base polyhedron:
\begin{prop} \label{prop:inequalities-are-tight}
Let $S=(E,\D,\rho)$ be an accessible submodular system and $A\in\D$.  Then $\BBB(S)$ has a vertex for which $\xx(A)=\rho(A)$.  In particular, for each $A\in\D$, the inequality $\xx(A)\leq\rho(A)$ defines a face of $\BBB(S)$.
\end{prop}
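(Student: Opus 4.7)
The plan is to produce a specific vertex of $\BBB(S)$ that saturates the inequality $\xx(A)\leq\rho(A)$. By Theorem~\ref{thm:submod-vertex}, vertices of $\BBB(S)$ are parametrized by linear extensions $\sigma$ of $\P=\Irr(\D)$, and the corresponding vertex $\xx^\rho_\sigma$ satisfies, by telescoping in~\eqref{x-from-sigma},
\[
\xx^\rho_\sigma\bigl(\{\sigma(1),\dots,\sigma(k)\}\bigr) \;=\; \rho\bigl(\{\sigma(1),\dots,\sigma(k)\}\bigr)
\]
for every $k$. Consequently, it is enough to exhibit a linear extension $\sigma\in\Le(\P)$ whose initial segment of length $|A|$ equals $A$; by the equivalence (\ref{bounded:linex})$\Leftrightarrow$(\ref{bounded:init}) of Definition~\ref{def:bounded-direction}, this amounts to exhibiting a maximal chain in~$\D$ that passes through~$A$.

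The existence of such a chain reduces to two independent accessibility statements: that $A$ is accessible from below and from above inside $\D$. The first is immediate from the standing hypothesis that $\D$ itself is accessible, which yields a maximal chain $\0=B_0\lessdot B_1\lessdot\cdots\lessdot B_{|A|}=A$ with each $B_i\in\D$. For the second, I claim that if $B\in\D$ with $A\subsetneq B$, then there exists $y\in B\sm A$ with $B\sm\{y\}\in\D$ and $B\sm\{y\}\supseteq A$. Using Birkhoff's theorem, identify $\D=J(\P)$ so that elements of $\D$ are order ideals of~$\P$; then $A$ and $B$ are order ideals, so any element of $B$ that is maximal in~$B$ (viewed in~$\P$) can be removed while staying in~$\D$. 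Since $A$ is itself an order ideal, not every maximal element of~$B$ can lie in~$A$, for otherwise $B\subseteq A$. Pick any maximal element $y$ of $B$ with $y\notin A$; then $B\sm\{y\}\in\D$ and still contains~$A$. Iterating yields a maximal chain $A=A_0\lessdot A_1\lessdot\cdots\lessdot A_{|E|-|A|}=E$ in $\D$, and concatenating produces the desired chain from $\0$ to $E$ through~$A$.

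The corresponding total order $\sigma\in\Ord(E)$ satisfies condition~(\ref{bounded:init}) of Definition~\ref{def:bounded-direction}, hence lies in $\Le(\P)$, so $\xx^\rho_\sigma$ is a vertex of $\BBB(S)$. By the telescoping observation above with $k=|A|$, we obtain $\xx^\rho_\sigma(A)=\rho(A)-\rho(\0)=\rho(A)$, proving the first assertion. For the ``in particular'' clause, the inequality $\xx(A)\leq\rho(A)$ holds everywhere on $\BBB(S)\subseteq\AAA(S)$ by~\eqref{define-submod-poly}, so its saturation locus $\{\xx\in\BBB(S)\st \xx(A)=\rho(A)\}$ is a (nonempty) face of~$\BBB(S)$.

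The only genuine obstacle is verifying that $A$ is accessible from above within $\D$; this is where one must use the distributive (equivalently, order-ideal) structure of~$\D$ rather than merely its accessibility, since accessibility only guarantees the existence of \emph{some} element to remove, not one avoiding~$A$.
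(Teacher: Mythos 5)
Your proof is correct and follows the same route as the paper: choose a bounded linear order $\sigma$ with $A$ as an initial segment and observe that the telescoping sum in~\eqref{x-from-sigma} gives $\xx^\rho_\sigma(A)=\rho(A)$. The only difference is that you carefully justify the existence of a maximal chain of $\D$ through $A$ (via accessibility below and the order-ideal structure above), a detail the paper's proof takes for granted.
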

\begin{proof}
Let $\sigma\in\Ord(E)$ be a linear order that is bounded with respect to $\D$ and for which $A$ is an initial segment.  Then the point $\xx=\xx^\rho_\sigma$ defined by~\eqref{x-from-sigma} is a vertex of $\BBB(S)$, and satisfies $\xx(A)=\rho(A)$.
\end{proof}

\begin{thm}[\textbf{Recession cones of base polyhedra of submodular systems}]\label{thm:extreme-ray}
\cite[Thm.~3.26, p.70]{Fujishige}
Let $S=(E,\D,\rho)$ be a submodular system, and let $\P$ be the poset on $E$ with relations $i\leq j$ whenever every element of $\D$ containing $j$ also contains $i$.  Equivalently, $\P\isom\Irr(\D)$, the poset of join-irreducible elements, in which $e\in E$ corresponds to the unique smallest element of~$\D$ containing~$e$.
Then the recession cone of $\BBB(S)$ depends only on $\D$ (not on $\rho$) and it is
\[
\rec(\D) = \Rr_{\geq0}\langle\ee_j-\ee_i \st j>_\P i\rangle.
\]
\end{thm}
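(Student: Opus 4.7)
The plan is to prove the claimed formula by two inclusions, after first identifying the recession cone from the defining inequalities. Since $\BBB(S)$ is nonempty (by accessibility of $\D$) and is cut out by the inequalities $\xx(A)\leq\rho(A)$ for $A\in\D$ together with the equation $\xx(E)=\rho(E)$, standard polyhedral theory gives
\[\rec(\BBB(S)) \;=\; R \;:=\; \{\yy\in\Rr^E \st \yy(A)\leq 0 \text{ for all } A\in\D,\ \yy(E)=0\},\]
which manifestly depends only on~$\D$, not on~$\rho$. Write $C := \Rr_{\geq 0}\langle\ee_j-\ee_i \st j>_\P i\rangle$; the goal is $R=C$.

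The inclusion $C\subseteq R$ is immediate. For any $j>_\P i$ the vector $\ee_j-\ee_i$ satisfies $(\ee_j-\ee_i)(E)=0$, and for any $A\in\D$ that contains $j$ the defining property of~$\P$ forces $i\in A$ as well, so $(\ee_j-\ee_i)(A)\in\{-1,0\}$. Hence every generator of $C$ lies in $R$.

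For the reverse inclusion $R\subseteq C$, I would argue by contradiction using Farkas' lemma. Suppose some $\yy\in R$ lies outside~$C$. Since $C$ is finitely generated and hence closed, Farkas produces $\ell\in(\Rr^E)^*$ with $\ell(\yy)>0$ and $\ell(\ee_j-\ee_i)\leq 0$ for every $j>_\P i$. The latter condition is exactly that $\ell$ is weakly order-reversing on~$\P$, i.e., $\ell_j\leq\ell_i$ whenever $j\geq_\P i$. Because $\yy(E)=0$, subtracting $(\min_e \ell_e)\mathbf{1}$ from $\ell$ preserves $\ell(\yy)$ while rendering $\ell$ pointwise nonnegative; I retain the name $\ell$.

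The key observation is now the dual dictionary: a nonnegative order-reversing function on~$\P$ is exactly a nonnegative combination of indicators of order ideals of~$\P$, that is, of elements of $\D=J(\P)$ by Birkhoff's theorem. Concretely, the pointwise identity $\ell = \int_0^\infty \mathbf{1}_{U_t}\, dt$, with $U_t := \{e\in E \st \ell_e > t\}$, decomposes $\ell$ into down-sets $U_t\in\D$, so
\[\ell(\yy) \;=\; \int_0^\infty \yy(U_t)\, dt \;\leq\; 0,\]
since every integrand is nonpositive by the definition of~$R$. This contradicts $\ell(\yy)>0$, completing the argument. The main obstacle is the reverse inclusion, and it hinges entirely on this dual dictionary between order-reversing $\ell$ and down-sets in $\P$; the remaining steps are routine polyhedral duality and a direct check.
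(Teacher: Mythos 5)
The paper does not prove Theorem~\ref{thm:extreme-ray} at all; it is quoted verbatim from Fujishige \cite[Thm.~3.26]{Fujishige}. Your argument is therefore not comparable to an in-paper proof, but it is a correct, self-contained derivation, and each step checks out. The identification $\rec(\BBB(S))=\{\yy \st \yy(A)\leq 0\ \forall A\in\D,\ \yy(E)=0\}$ is valid because $\BBB(S)\neq\emptyset$ (accessibility) and the recession cone of a nonempty $\{x\st Ax\leq b\}$ is $\{y \st Ay\leq 0\}$; this already gives independence from $\rho$. The easy inclusion is fine. For the reverse inclusion, the two ingredients you need both hold in the paper's setup: (i) separation of a point from the closed finitely generated cone $C$ yields an $\ell$ that is order-reversing on $\P$, and normalizing by a multiple of $\mathbf{1}$ is harmless because $\yy(E)=0$; (ii) the level sets $U_t=\{e\st\ell_e>t\}$ of a nonnegative order-reversing $\ell$ are down-sets of $\P$, and down-sets of $\P$ are exactly the elements of $\D$ (every $A\in\D$ is a down-set by the definition of $\leq_\P$, and conversely a down-set $I$ equals $\bigcup_{j\in I}\supn_\D(\{j\})$, which lies in $\D$ by closure under unions --- this is the concrete form of Birkhoff's theorem the paper uses in \S\ref{subsec:lattices}). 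The layer-cake identity then forces $\ell(\yy)\leq 0$, giving the contradiction. This is essentially the standard Lov\'asz-extension/greedy-dual argument for submodular polyhedra, and it is arguably more transparent than chasing extreme rays as in Fujishige; the only stylistic gap is that you assert rather than verify that down-sets of $\P$ lie in $\D$, which is the one place where closure of $\D$ under unions is genuinely used.
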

Observe that this theorem also completely describes the poset $\P$, and thus the distributive lattice~$\D$, in terms of the recession cone of the base polyhedron.

\section{Unbounded matroids} \label{sec:UM}

We can now define the central objects of our study, both geometrically and combinatorially.

\begin{defn}
A polyhedron $\ppp\subset\Rr^n$ is an \defterm{unbounded matroid polyhedron}, or for short a \defterm{U-polyhedron}, if the following conditions hold:
\begin{enumerate}
\item $\ppp$ is an extended generalized permutahedron (that is, every 1-dimensional face, whether bounded or unbounded, is parallel to a difference of two standard basis vectors); 
\item Every vertex of $\ppp$ is an 0,1-vector;
\item $\ppp$ has at least one vertex,
\end{enumerate}
\end{defn}
The word ``unbounded'' should be interpreted as ``possibly unbounded''.  In particular, a bounded U-polyhedron is just a matroid base polytope.

\begin{defn} \label{def:U-matroid}
A \defterm{U-matroid} is a submodular system $U=(E,\D,\rho)$ (see Defn.~\ref{def:submodsyst}) satisfying the following additional conditions for all $A,B\in\D$:
\begin{align}
& \rho(A)\in\Nn && \textup{(integrality);}\label{integrality}\\
& B=A\cup\{e\}\implies\rho(B)-\rho(A)\leq 1 && \textup{(unit increase).} \label{unit-increase}
\end{align}
The unit-increase property is equivalent to the condition that if $A\subseteq B$, then $\rho(B)-\rho(A) \leq |B|-|A|$.
\end{defn}

We call $\D$ the \defterm{characteristic lattice} of $U$, and $\P=\Irr(\D)$ is its \defterm{characteristic poset}.  By Birkhoff's theorem, each of $\D$ and $\P$ determines the other, so when defining a U-matroid we may specify whichever is more convenient.

This definition is very close to that of the rank function of a pregeometry in a partial order, as defined by Faigle~\cite[Lemma~5]{Faigle}; we will discuss the connection in more detail momentarily.

Observe that a U-matroid is a matroid if and only if $\D=2^E$.  That is, the intersection of the class of polymatroids and the set of U-matroids is precisely the class of matroids.

\begin{thm} \label{thm:ext-mat-GP}
The correspondence given by Theorem~\ref{thm:submod-vertex} restricts to a bijection between U-matroids and extended 0/1 generalized permutahedra.
\end{thm}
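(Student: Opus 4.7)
The plan is to leverage the bijection between submodular systems and extended generalized permutahedra from \cite[Thm.~3.1.6]{AA}, which is already in hand, and simply verify that on both sides, the extra conditions defining U-matroids correspond to the condition that every vertex is a 0/1 vector. The main tool is the explicit vertex formula from Theorem~\ref{thm:submod-vertex}, which expresses each coordinate of a vertex of $\BBB(S)$ as a rank difference across a covering pair in the characteristic lattice.

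First I would handle the forward direction. Suppose $U=(E,\D,\rho)$ is a U-matroid and $\sigma\in\Le(\Irr(\D))$. By Theorem~\ref{thm:submod-vertex}, the coordinates of the vertex $\xx^\rho_\sigma$ are the differences $\rho(A_i)-\rho(A_{i-1})$, where $A_i=\{\sigma(1),\dots,\sigma(i)\}$; by Definition~\ref{def:bounded-direction}(\ref{bounded:init}) each $A_i$ lies in $\D$, and $A_i=A_{i-1}\cup\{\sigma(i)\}$. Monotonicity~\eqref{monotonicity} forces each such difference to be $\geq 0$, unit-increase~\eqref{unit-increase} forces it to be $\leq 1$, and integrality~\eqref{integrality} forces it to be an integer; hence it lies in $\{0,1\}$. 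So $\BBB(U)$ is a U-polyhedron.

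For the reverse direction, assume $S=(E,\D,\rho)$ is a submodular system whose base polyhedron has only 0/1 vertices, and I want to deduce integrality and unit-increase. For integrality, fix $A\in\D$; Proposition~\ref{prop:inequalities-are-tight} supplies a vertex $\xx$ of $\BBB(S)$ with $\xx(A)=\rho(A)$, and summing 0/1 coordinates yields $\rho(A)\in\Nn$. For unit-increase, fix a covering pair $A\coveredby B=A\cup\{e\}$ in $\D$. Because both $A$ and $B$ are order ideals of $\P=\Irr(\D)$, concatenating any linear extension of $\P|_A$, then the singleton $e$, then any linear extension of $\P|_{E\sm B}$ produces some $\sigma\in\Le(\P)$ with $A=\{\sigma(1),\dots,\sigma(|A|)\}$ and $B=\{\sigma(1),\dots,\sigma(|A|+1)\}$. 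The $e$-coordinate of the corresponding vertex $\xx^\rho_\sigma$ then equals $\rho(B)-\rho(A)$, and by hypothesis this is $0$ or $1$.

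I do not anticipate a serious obstacle: the argument is essentially a two-sided application of the vertex formula, once one observes that (i) every coordinate of every vertex is a covering-pair rank difference in $\D$, and (ii) conversely, every covering-pair rank difference in $\D$ appears as a vertex coordinate for an appropriately chosen linear extension. The only small subtlety is to check that such a linear extension can always be chosen to place a prescribed covering pair as consecutive initial segments, which follows immediately from Birkhoff's theorem and accessibility of $\D$.
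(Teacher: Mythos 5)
Your proof is correct and follows essentially the same route as the paper: both directions rest on the vertex formula of Theorem~\ref{thm:submod-vertex}, with the forward direction reading off that each vertex coordinate is a covering-pair rank difference (hence in $\{0,1\}$ by monotonicity, integrality, and unit increase), and the converse realizing any prescribed covering pair of $\D$ as consecutive initial segments of a bounded linear order. The only cosmetic difference is that you argue the converse directly (and handle integrality via Proposition~\ref{prop:inequalities-are-tight}) where the paper argues by contrapositive; your write-up is, if anything, slightly more explicit than the paper's.
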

\begin{proof}
Let $U=(E,\D,\rho)$ be a U-matroid. Theorem~\ref{thm:submod-vertex} gives that $\BBB(U)$ is a polyhedron with vertices $\{\xx^\rho_\sigma\ |\ \sigma\in\Ord(E)\}$, where $\xx^\rho_\sigma$ is defined by \eqref{x-from-sigma}. It follows by the unit increase property that any such vertex is 0/1.

Now let $(E,\D,\rho)$ be a submodular system that is not a U-matroid. Then there exist $A\in\D$ and $e\in E$ with $A\cup\{e\}\in\D$ and $\rho(A\cup\{e\})\not\in\{\rho(A),\rho(A)+1\}$.  Take any maximal chain of $\D$ passing through $A$ and $A\cup\{e\}$.  By Theorem~\ref{thm:submod-vertex} this chain gives rise to a vertex of the base polyhedron whose $e$th coordinate is not 0/1.
\end{proof}

Recall from \S\ref{sec:polyhedra} that the normal fan of a U-polyhedron $\ppp$ is a coarsening of a maximal-dimensional convex subfan of the braid fan.  The support of that normal fan corresponds to a poset $\P$; specifically, the support is the union of the chambers of the braid arrangement that correspond to the linear extensions of~$\P$.  Thus $\P$ is in fact the characteristic poset of the U-matroid corresponding to $\ppp$.  The U-matroid is a matroid, and its base polyhedron is bounded, if and only if the lattice $\D$ is Boolean (equivalently, $\P$ is an antichain).

\begin{example}
Two U-polyhedra $\Blue{\ppp_1},\Red{\ppp_2}\subset\Rr^3$ are shown in Figure~\ref{fig:twoU}.  They have the same recession cone, namely the ray $\Rr_{\geq0}\langle\ee_3-\ee_2\rangle$, hence the same characteristic poset (with ground set $E=[3]$ and one relation $2<3$) and the same characteristic lattice (namely $\{\0,1,2,12,23,123\}$).  On the other hand, their rank functions $\Blue{\rho_1},\Red{\rho_2}$ differ.

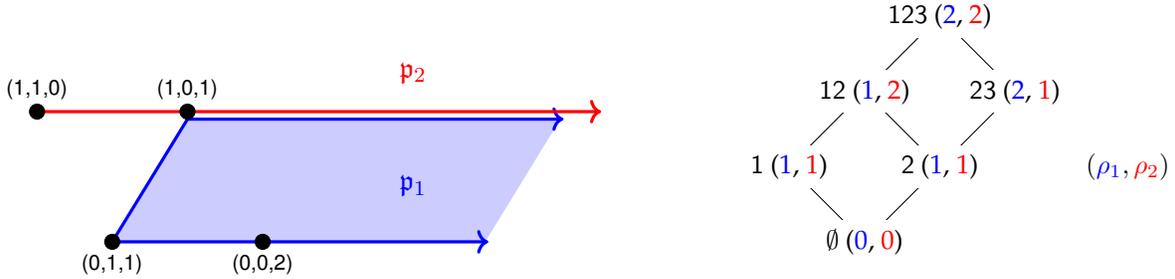
\begin{figure}[ht]
\begin{center}
\begin{tikzpicture}
\newcommand{\scl}{2}
\newcommand{\epsil}{.1}
\draw[blue, <->, very thick, fill=blue!20] (\scl*2.5,0)--(\scl*0,0)--(\scl*.5,\scl*.866-\epsil)--(\scl*3,\scl*.866-\epsil);
\draw[red, ->, very thick] (-\scl*0.5,\scl*.866)--(\scl*3.25,\scl*.866);
\draw[fill=black] (0,0) circle (\scl*.05);
\foreach \a in {0,60,120} \draw[fill=black] (\a:\scl) circle(\scl*.05);
\node at (\scl*0,-.3) {\sf\footnotesize(0,1,1)};
\node at (\scl*1,-.3) {\sf\footnotesize(0,0,2)};
\node at (\scl*.5,\scl*.866+.3) {\sf\footnotesize(1,0,1)};
\node at (-\scl*.5,\scl*.866+.3) {\sf\footnotesize(1,1,0)};
\node[blue] at (\scl*2,.75) {\large $\ppp_1$};
\node[red] at (\scl*2,2.25) {\large $\ppp_2$};

\begin{scope}[shift={(10,0)}]
 
\foreach \a in {0,1} \draw (-\a,\a)--(-\a+2,\a+2);
\foreach \a in {0,1,2} \draw (\a,\a)--(\a-1,\a+1);
\foreach \x/\y/\l/\r/\s in {
				1/3/123/2/2,
		0/2/12/1/2,		2/2/23/2/1,
-1/1/1/1/1,			1/1/2/1/1,
		0/0/\0/0/0}
\node[fill=white] at (\x,\y) {$\mathsf{\l}$ (\Blue{\r}, \Red{\s})};
\node at (3.5,1) {$(\Blue{\rho_1},\Red{\rho_2})$};
\end{scope}
\end{tikzpicture}
\end{center}
\caption{Two U-polyhedra with the same characteristic poset but different rank functions.\label{fig:twoU}}
\end{figure}
\end{example}

\begin{defn} \label{def:basis}
A \defterm{basis} of a U-matroid $U=(E,\D,\rho)$ is the support of a vertex of its base polyhedron.
\end{defn}
The set of all bases will be denoted $\B(U)$ or $\B(\rho)$.  Note that the definition of a U-matroid polyhedron implies that $\B(U)\neq\0$.
By Theorem~\ref{thm:submod-vertex}, every basis arises from a maximal chain $A=A_0\coveredby\cdots\coveredby A_n$ in $\D$, or equivalently from the corresponding linear extension $\sigma\in\P$ with $A_k=\{\sigma(1),\dots,\sigma(k)\}$.  The basis records the ``rank jumps'' in $A$:
\begin{equation} \label{basis-from-linext}
B_\rho(A)=B_\rho(\sigma)=\left\{\sigma(i)\st \rho(A_i)=\rho(A_{i-1})+1\}\right\}.
\end{equation}
That is, $B_\rho(\sigma)=\{\sigma(i_1),\dots,\sigma(i_{\rho(E)})\}$, where $i_1<\cdots<i_r$ and, for each $k\in[r]$, the set $\{\sigma(i_1),\dots,\sigma(i_k)\}$ is the $\sigma$-lexicographically smallest set of cardinality $k$ and rank $k$.

Theorem~\ref{thm:submod-vertex} describes how to calculate the basis system of a U-matroid from its rank function.  The next result describes how to recover the rank function from the vertex set and the characteristic lattice.

\begin{prop} \label{prop:rank-from-vertices}
The rank function of a U-matroid $U=(E,\D,\rho)$ is determined by the set $\B(U)$ and the lattice $\D$ by the formula
\begin{equation} \label{rank-vertex}
\rho(A) = \max\{|A\cap B|\st B\in\B(U)\}.
\end{equation}
\end{prop}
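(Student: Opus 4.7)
The plan is to derive the formula as a direct combination of two facts about the base polyhedron: every vertex is a 0/1 vector (the defining property of U-matroids) and every rank inequality is tight on some vertex (Proposition~\ref{prop:inequalities-are-tight}). Fix $A\in\D$.

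First I would establish the inequality $\rho(A)\geq|A\cap B|$ for every $B\in\B(U)$. By Definition~\ref{def:basis} and Theorem~\ref{thm:ext-mat-GP}, each basis $B\in\B(U)$ is the support of a vertex $\xx\in V(\BBB(U))$, and this vertex is a 0/1 vector (that is, $\xx=\mathbf{1}_B$). Since $\BBB(U)\subseteq\AAA(U)$, the defining inequality \eqref{define-submod-poly} applied to $A\in\D$ gives $\xx(A)\leq\rho(A)$. But for a 0/1 vector supported on $B$ we have $\xx(A)=|A\cap B|$, establishing the upper bound.

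For the reverse inequality, I would invoke Proposition~\ref{prop:inequalities-are-tight}: since $A\in\D$ and $U$ is an accessible submodular system, there is a vertex $\xx\in V(\BBB(U))$ with $\xx(A)=\rho(A)$. Taking $B=\supp(\xx)\in\B(U)$ and again using the 0/1 structure of $\xx$ yields $|A\cap B|=\xx(A)=\rho(A)$, so the maximum in \eqref{rank-vertex} is attained. The two inequalities together give the formula.

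There is no serious obstacle; the proof is essentially unpacking Theorem~\ref{thm:submod-vertex}, Theorem~\ref{thm:ext-mat-GP}, and Proposition~\ref{prop:inequalities-are-tight}. The only small point worth flagging is that the argument genuinely requires $A\in\D$: the inequality $\xx(A)\leq\rho(A)$ is only part of the description of $\AAA(U)$ for $A\in\D$, so the identity is stated (and proved) only on the domain of~$\rho$. Sets $A\subseteq E$ with $A\notin\D$ are outside the scope of the statement, as $\rho(A)$ is not defined there.
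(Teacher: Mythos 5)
Your proof is correct, but it takes a genuinely different route from the paper's. The paper argues combinatorially: for the direction $\rho(A)\geq|A\cap B|$ it fixes a maximal chain giving rise to $B$, intersects that chain with $A$, and uses submodularity to show that every rank jump of the chain occurring inside $A$ forces a rank jump of the intersected chain, so $\rho(A)\geq|A\cap B|$; for the other direction it observes that a chain through $A$ produces a basis $B$ with $|A\cap B|=\rho(A)$. You instead argue geometrically: since each basis is the support of a $0/1$ vertex $\mathbf{1}_B$ of $\BBB(U)$, the defining inequality $\mathbf{1}_B(A)\leq\rho(A)$ immediately gives the upper bound $|A\cap B|\leq\rho(A)$, and Proposition~\ref{prop:inequalities-are-tight} supplies a vertex tight at $A$, whose support attains the maximum. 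The attainment step is essentially the same in both proofs (Proposition~\ref{prop:inequalities-are-tight} is proved by exactly the chain-through-$A$ argument), but your upper bound replaces the paper's submodularity computation with an appeal to Theorem~\ref{thm:submod-vertex}, i.e., to Fujishige's result that the greedy points actually lie in $\BBB(U)$ and are its only vertices. This makes your argument shorter and more transparent at the cost of leaning on the imported polyhedral machinery, whereas the paper's version is self-contained at the level of the rank function. Your closing remark that the identity only makes sense for $A\in\D$ is also correct.
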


\begin{proof}
First, for every maximal chain $\0=A_0\coveredby\cdots\coveredby A_n=[n]$ in $\D$, let $B=B_\rho(A)$.  Then $|A_k\cap B|=\rho(A_k)$ for every $k$, implying the $\leq$ direction of~\eqref{rank-vertex}.  For the converse, let $C\in\D$.  Consider the chain
\begin{equation} \label{Cchain}
\0=A_0\cap C\subset A_1\cap C\cdots\subset A_n\cap C=C
\end{equation}
in $\D$ (which includes repeats).  If the unique element of $A_i\sm A_{i-1}$ belongs to $C$ and $\rho(A_i)=\rho(A_{i-1})+1$, then by submodularity
\[\rho(A_i\cap C)\geq \rho(A_i)+\rho(A_{i-1}\cap C)-\rho(A_{i-1})\geq \rho(A_{i-1}\cap C)+1\]
so equality must hold.  It follows that the number of rank jumps in the chain~\eqref{Cchain} is at least $|B\cap C|$, and that number of rank jumps is just $\rho(A)$, establishing the reverse inequality.
\end{proof}

\begin{cor} \label{easy-basis}
If $A\in\D$ and $\rho(A)=|A|=\rho(E)$, then $A$ is a basis.
\end{cor}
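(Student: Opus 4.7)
The plan is to exhibit a maximal chain in $\D$ passing through $A$ for which the vertex produced by Theorem~\ref{thm:submod-vertex} has support exactly $A$. Equivalently, via \eqref{basis-from-linext}, we want a linear extension $\sigma \in \Le(\P)$ of the characteristic poset $\P = \Irr(\D)$ such that $A$ is an initial segment of $\sigma$ and $B_\rho(\sigma) = A$.

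First, because $A \in \D$, it is an order ideal of $\P$. Choose any linear extension of $\P|_A$ to order the elements of $A$, then append any linear extension of $\P|_{E\sm A}$ to obtain an ordering $\sigma \in \Ord(E)$. Since $A$ is a down-set, no element of $E \sm A$ lies below any element of $A$ in~$\P$, so $\sigma$ is a linear extension of $\P$ and is therefore bounded with respect to $U$ by Definition~\ref{def:bounded-direction}. By construction, $A_{|A|} = \{\sigma(1), \ldots, \sigma(|A|)\} = A$, so the chain $\0 = A_0 \coveredby \cdots \coveredby A_n = E$ passes through $A$.

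Next, we determine the rank jumps along this chain. By the unit-increase property \eqref{unit-increase}, $\rho(A_i) - \rho(A_{i-1}) \in \{0,1\}$ for each~$i$. Since $\rho(A_0) = 0$ and $\rho(A_{|A|}) = \rho(A) = |A|$, every step from $A_0$ to $A_{|A|}$ must be a rank jump, forcing $\rho(A_i) = i$ for $0 \leq i \leq |A|$. Beyond that, monotonicity gives $\rho(A) \leq \rho(A_i) \leq \rho(E) = |A|$ for $|A| \leq i \leq n$, so there are no further jumps. By formula~\eqref{basis-from-linext}, the basis produced by $\sigma$ is
\[ B_\rho(\sigma) = \{\sigma(1), \ldots, \sigma(|A|)\} = A, \]
so $A \in \B(U)$, as desired.

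There is essentially no serious obstacle here: the argument is a direct unwinding of \eqref{basis-from-linext} once Theorem~\ref{thm:submod-vertex} produces the vertex. The only point requiring a moment of care is that the concatenated ordering in the first step actually lies in $\Le(\P)$, which is the standard order-ideal fact noted above.
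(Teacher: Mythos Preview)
Your proof is correct. You construct an explicit linear extension $\sigma\in\Le(\P)$ with $A$ as an initial segment, and then read off from unit-increase and monotonicity that the rank jumps along the associated maximal chain occur precisely at the first $|A|$ steps, so $B_\rho(\sigma)=A$.

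The paper, however, derives this statement as an immediate corollary of Proposition~\ref{prop:rank-from-vertices}: by the formula $\rho(A)=\max\{|A\cap B|:B\in\B(U)\}$ there is some basis $B$ with $|A\cap B|=\rho(A)=|A|$, hence $A\subseteq B$; since every basis has cardinality $\rho(E)=|A|$, it follows that $A=B\in\B(U)$. Your argument bypasses Proposition~\ref{prop:rank-from-vertices} entirely and works directly from Theorem~\ref{thm:submod-vertex} and~\eqref{basis-from-linext}, which makes it more self-contained and constructive (it actually exhibits a chain witnessing $A$ as a basis). The paper's route is shorter once the rank-from-vertices formula is in hand, but your version would stand on its own even if that proposition were removed.
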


On the other hand, the connection between bases and the rank function is more complicated for U-matroids than for matroids, and some standard statements of matroid theory turn out to fail in the setting of U-matroids.  For instance, not every minimal element of full rank in the characteristic lattice~$\D$ is a basis.  In fact, not every basis must be an element of $\D$.

\begin{example} \label{ex:not-comb-scheme}
Consider the U-matroid with characteristic lattice $\D$ and rank function \Red{$\rho$} shown below.
\begin{center}
\begin{tikzpicture}[scale=0.8]
\foreach \a in {0,1,2} \draw (\a,\a)--(\a-3,\a+3);
\foreach \a in {0,1,2,3} \draw (-\a,\a)--(2-\a,2+\a);
\foreach \x/\y/\l/\r in {
0/0/\0/0,  1/1/4/1,    2/2/45/2,
-1/1/1/1,  0/2/14/2,   1/3/145/3,
-2/2/12/1, -1/3/124/2, 0/4/1245/3,
-3/3/123/2,-2/4/1234/3,-1/5/12345/3}
\node[fill=white] at (\x,\y) {$\l$ \color{red}(\r)};
\end{tikzpicture}
\end{center}
The minimal elements of full rank are $145$ and $1234$.  The former is a basis (by Corollary~\ref{easy-basis}), but the latter cannot be because it has the wrong cardinality.  In fact the other basis is $134$ (arising from, e.g., the chain $\0<4<14<124<1234<12345$), which does not belong to~$\D$.
\end{example}

Basis systems of U-matroids are pure set families (i.e., all members have the same cardinality), but they are strictly more general than matroid basis systems.  The following U-matroid, which was first described in \cite[p.~31]{CMS}, is a core example: its vertex set is not the vertex set of a matroid base polytope.

\begin{example} \label{ex:stalactite}
The \defterm{stalactite}, shown in Figure~\ref{fig:stalactite}, is the U-polyhedron
\[\sss=\{\xx\in\Rr^4\st x_1+x_2+x_3+x_4=2,\ x_2,x_3,x_4\geq 0,\ x_1,x_2,x_3\leq 1\}.\]
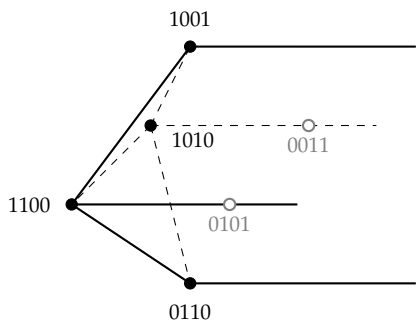
\begin{figure}[htb]
\begin{center}
\begin{tikzpicture}[scale=0.75]
\newcommand{\scl}{0.7}
\newcommand{\xiooi}{0*\scl}	\newcommand{\yiooi}{6*\scl}
\newcommand{\xooii}{3*\scl}	\newcommand{\yooii}{4*\scl}
\newcommand{\xioio}{-1*\scl}	\newcommand{\yioio}{4*\scl}
\newcommand{\xoioi}{1*\scl}	\newcommand{\yoioi}{2*\scl}
\newcommand{\xiioo}{-3*\scl}	\newcommand{\yiioo}{2*\scl}
\newcommand{\xoiio}{0*\scl}	\newcommand{\yoiio}{0*\scl}
\coordinate (oiio) at (\xoiio,\yoiio);
\coordinate (ioio) at (\xioio,\yioio);
\coordinate (ooii) at (\xooii,\yooii);
\coordinate (iioo) at (\xiioo,\yiioo);
\coordinate (oioi) at (\xoioi,\yoioi);
\coordinate (iooi) at (\xiooi,\yiooi);
\foreach \coo in {(oiio),(iooi),(iioo)} \draw[dashed] (ioio)--\coo;
\draw[thick] (oiio)--(iioo)--(iooi);
\draw[dashed] (ioio)--(4+\xioio,\yioio);
\draw[thick] (iooi)--(4+\xiooi,\yiooi);
\draw[thick] (oiio)--(4+\xoiio,\yoiio);
\draw[thick] (iioo)--(4+\xiioo,\yiioo);
\draw[thick,gray,fill=white] (\xooii,\yooii) circle(.1); \node[gray] at (\xooii,\yooii-.35) {\footnotesize0011};
\draw[thick,gray,fill=white] (\xoioi,\yoioi) circle(.1); \node[gray] at (\xoioi,\yoioi-.35) {\footnotesize0101};
\draw[fill=black] (\xoiio,\yoiio) circle(.1); \node at (\xoiio,\yoiio-.5) {\footnotesize0110};
\draw[fill=black] (\xioio,\yioio) circle(.1); \node at (\xioio+.75,\yioio-.25) {\footnotesize1010};
\draw[fill=black] (\xiioo,\yiioo) circle(.1); \node at (\xiioo-.75,\yiioo) {\footnotesize1100};
\draw[fill=black] (\xiooi,\yiooi) circle(.1); \node at (\xiooi,\yiooi+.5) {\footnotesize1001};
\end{tikzpicture}
\caption{The stalactite.\label{fig:stalactite}}
\end{center}
\end{figure}

We now describe the corresponding U-matroid.
The recession cone is the ray spanned by $(-1,0,0,1)$, so by Theorem~\ref{thm:extreme-ray} the characteristic poset $\P$ on $E=\{1,2,3,4\}$ has only one relation, namely $1<4$.  Thus the characteristic lattice $\D$ is $J(\P)=\{A\subseteq[4]\st 4\in A\implies 1\in A\}$.
By Proposition~\ref{prop:rank-from-vertices}, the rank function $\rho$ is given by $\rho(A) = \min(|A|,2)$ for all $A\in D$.

The supports of the vertices of the stalactite do not form a matroid basis system.  It may be easiest to see this geometrically: the convex hull of the vertices is a simplex, and the edge between $(1,0,0,1)$ and $(0,1,1,0)$ is not parallel to the difference of two standard basis vectors.  (On the other hand, $\sss$ is the Minkowski sum of its recession cone with the base polytope of the uniform matroid $U_2(4)$, whose vertices include the four black and two white points shown in Figure~\ref{fig:stalactite}.  As we will see, every U-matroid is the Minkowski sum of its recession cone with a matroid base polytope.) 
\end{example}

We now describe the connection to the work of Faigle~\cite{Faigle}.  Let $\P$ be a partial order on $E$.
Faigle~\cite[\S2]{Faigle} defined a \defterm{pregeometric closure operator} on $\D=J(\P)$ to be a map $\cl:2^E\to\D$
satisfying the following conditions for all $A,B\subseteq E$ and $p,q\in E$:
\begin{enumerate}[label=(C\arabic*)]
\item\label{Faigle:D1} $A\subseteq\clo{A}$.
\item\label{Faigle:D2} if $A\subseteq\clo{B}$ then $\clo{A}\subseteq\clo{B}$; and
\item\label{Faigle:D3} Suppose that $r\in\clo{A}$ for all $r<p$; $q\notin\clo{A}$; and $q\in\clo{A\cup p}$.  Then $p\in\clo{A\cup q}$.
\end{enumerate}

A set of the form $\clo{A}$ is called a \defterm{flat}.
The set $\L$ of all flats, regarded as a poset under containment, is in fact a semimodular lattice whose meet is intersection~\cite[Prop.~3]{Faigle}.  In particular, $\L$ is ranked, and its rank function satisfies the axioms of Definition~\ref{def:U-matroid} \cite[Lem.~5]{Faigle}.  Conversely, by \cite[Thm.~6, Prop.~10]{Faigle}, every U-matroid $U=(E,\D,\rho)$ gives rise to the pregeometric closure operator $\cl=\cl_U:2^E\to\D$ defined by
\begin{equation}\label{ranktoclosure}
\clo{A}=\{e\in E\ |\ \rho(\supn_\D(A\cup e))=\rho(\supn_\D(A))\},
\end{equation}
with associated lattice of flats
\begin{subequations}
\begin{equation}
\L(U)=\L(\rho)= \{A\in\D\st \rho(B)>\rho(A) \ \ \forall B\in \D\st B\supsetneq A\} \label{flats-from-rank-alt}
\end{equation}
and the correspondences between rank functions and closure operators (resp., lattices of flats) are bijections.
For later use, we observe that~\eqref{flats-from-rank-alt} may be rewritten as
\begin{align}
\L(U)=\L(\rho)
&= \{A\in\D\st \rho(A\cup e)>\rho(A) \ \ \forall e\in E\sm A \textup{ with } A\cup e\in\D\} \label{flats-from-rank-no-sup}\\
&= \{A\in\D\st \rho(\supn_\D(A\cup e))>\rho(A) \ \ \forall e\in E\sm A\}. \label{flats-from-rank}
\end{align}
\end{subequations}

\begin{remark}
Faigle~\cite[pp.~39,43]{Faigle} defined two kinds of bases: a minimal element of $\D$ whose closure is $E$ (a ``basis''), and a tuple $(x_1,\dots,x_r)$ of elements of~$E$ such that $r=\rho(E)$ and $\rho(\{x_1,\dots,x_k\})=k$ for all $k$ (a ``B-basis'').  By the last example, neither definition coincides with ours.  The set 1234 is a basis in Faigle's sense but not in ours, and the B-bases  include tuples such as  $(2,3,4)$ that do not correspond to vertices.
\end{remark}

\section{Sheared polyhedra and generous extensions} \label{sec:sheared}

With our combinatorics--geometry dictionary in hand, we wish to study relationships among U-matroids on the same ground set.  A natural question is when a given U-matroid $U=(E,\D,\rho)$ can be extended to a matroid on $E$, or more generally to a U-matroid $(E,\D',\rho')$ with $\D\subseteq\D'\subseteq 2^E$.  Geometrically, such a lattice extension corresponds to constructing a special subpolyhedron of $\BBB(U)$ that we call a \defterm{sheared polyhedron} (or a \defterm{sheared polytope} in the case $\D'=2^E$).  In fact, we will construct a canonical extension of $\rho$ to $\D'$, the \defterm{generous extension}, which dominates every other lattice extension of $\rho$.  When $\D'=2^E$, this construction gives the unique largest matroid base polytope contained in any U-matroid base polyhedron.

\subsection{Sheared polyhedra and lattice extensions} \label{sec:sheared-basics}

\begin{defn} \label{defn:sheared}
Let $\ppp$ and $\ppp'$ be U-matroid polyhedra.  We say that $\ppp'$ is a \defterm{sheared polyhedron} of $\ppp$ if $\ppp'\subseteq \ppp$ and $V(\ppp')\supseteq V(\ppp)$.  A \defterm{sheared polytope} is a bounded sheared polyhedron.
\end{defn}

\newcommand{\ssstop}{\sss_\textup{top}}
\begin{example}\label{ex:halfstalact}
Let $\ssstop$ be the subpolyhedron of the stalactite~$\sss$ (see Example~\ref{ex:stalactite}) satisfying $x_1+x_4\geq1$.  The square pyramid $\ssstop'$ shown in violet below is a sheared polytope of $\ssstop$.
\begin{center}
\begin{tikzpicture}[scale=0.75]
\newcommand{\scl}{0.7}
\newcommand{\xiooi}{0*\scl}	\newcommand{\yiooi}{6*\scl}
\newcommand{\xooii}{3*\scl}	\newcommand{\yooii}{4*\scl}
\newcommand{\xioio}{-1*\scl}	\newcommand{\yioio}{4*\scl}
\newcommand{\xoioi}{1*\scl}	\newcommand{\yoioi}{2*\scl}
\newcommand{\xiioo}{-3*\scl}	\newcommand{\yiioo}{2*\scl}
\coordinate (ioio) at (\xioio,\yioio);
\coordinate (ooii) at (\xooii,\yooii);
\coordinate (iioo) at (\xiioo,\yiioo);
\coordinate (oioi) at (\xoioi,\yoioi);
\coordinate (iooi) at (\xiooi,\yiooi);
\foreach \coo in {(iooi),(iioo)} \draw[dashed,violet] (ioio)--\coo;
\draw[thick, violet] (iioo)--(iooi);
\draw[dashed, violet] (oioi)--(ooii)--(iooi);
\draw[thick, violet] (iooi)--(oioi);
\draw[dashed, violet] (ioio)--(ooii);
\draw[dashed] (ooii)--(4+\xioio,\yioio);
\draw[thick] (iooi)--(4+\xiooi,\yiooi);
\draw[thick, violet] (iioo)-- (oioi);
\draw[thick] (oioi)--(4+\xiioo,\yiioo);
\draw[thick,violet,fill=violet] (\xooii,\yooii) circle(.1); \node[violet] at (\xooii+.25,\yioio-.35) {\footnotesize0011};
\draw[thick,violet,fill=violet] (\xoioi,\yoioi) circle(.1); \node[violet] at (\xoioi,\yoioi-.35) {\footnotesize0101};
\draw[fill=black] (\xioio,\yioio) circle(.1); \node at (\xioio+.5,\yioio-.25) {\footnotesize1010};
\draw[fill=black] (\xiioo,\yiioo) circle(.1); \node at (\xiioo-.75,\yiioo) {\footnotesize1100};
\draw[fill=black] (\xiooi,\yiooi) circle(.1); \node at (\xiooi,\yiooi+.5) {\footnotesize1001};
\end{tikzpicture}
\end{center}
\end{example}

\begin{defn} \label{defn:lattice-restriction}
Let $U=(E,\D,\rho)$ be a U-matroid and $\D'\subseteq\D$ an accessible distributive sublattice.  The \defterm{lattice restriction} of $U$ to $\D'$ is the U-matroid $U'=U|_{\D'}=(E,\D',\rho|_{\D'})$.  In this case we say that $U$ is a \defterm{lattice extension} of $U'$ to $\D$.
\end{defn}

\begin{thm}\label{thm:sheared-correspondence}
Let $U=(E,\D,\rho)$ and $U'=(E,\D',\rho')$ be U-matroids, with $\ppp=\BBB(U)$ and $\ppp'=\BBB(U')$.  Then $U'$ is a lattice extension of $U$
if and only if $\ppp'$ is a sheared polyhedron of $\ppp$.
\end{thm}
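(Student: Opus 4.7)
The plan is to prove the two directions of the biconditional separately, leaning on three ingredients from the preceding material: the vertex description of Theorem~\ref{thm:submod-vertex}, the recession cone description of Theorem~\ref{thm:extreme-ray}, and the tightness of the submodular inequalities from Proposition~\ref{prop:inequalities-are-tight}. A preliminary remark I will use in both directions: via Birkhoff's theorem, $\D\subseteq\D'$ if and only if $\P'\subseteq\P$ as partial orders on $E$, where $\P=\Irr(\D)$ and $\P'=\Irr(\D')$. Indeed, if $\D\subseteq\D'$ and $i\not\leq_\P j$ then some $A\in\D\subseteq\D'$ contains $j$ but not $i$, so $i\not\leq_{\P'}j$; the converse is similar.

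For the forward direction, assume $\D\subseteq\D'$ and $\rho'|_\D=\rho$. The inclusion $\ppp'\subseteq\ppp$ follows because every defining constraint $\xx(A)\leq\rho(A)$ of $\ppp$ (for $A\in\D$) is also a defining constraint of $\ppp'$, and the hyperplane $\xx(E)=\rho(E)=\rho'(E)$ contains both. For $V(\ppp)\subseteq V(\ppp')$, note that $\P'\subseteq\P$ gives $\Le(\P)\subseteq\Le(\P')$; then for any $\sigma\in\Le(\P)$, the vertex $\xx^\rho_\sigma$ from~\eqref{x-from-sigma} depends only on $\rho$-values of initial segments of $\sigma$, which all lie in $\D$. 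So $\xx^\rho_\sigma=\xx^{\rho'}_\sigma$, which is a vertex of $\ppp'$ by Theorem~\ref{thm:submod-vertex}.

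For the backward direction, assume $\ppp'\subseteq\ppp$ and $V(\ppp)\subseteq V(\ppp')$. Since $\ppp'\subseteq\ppp\subseteq\{\xx(E)=\rho(E)\}$ is nonempty, we immediately get $\rho'(E)=\rho(E)$. To extract $\D\subseteq\D'$ from the polyhedral containment, I would pass to recession cones: $\ppp'\subseteq\ppp$ forces $\rec(\ppp')\subseteq\rec(\ppp)$, and I would then use the criterion that $\ee_j-\ee_i\in\rec(\ppp)$ if and only if $i\leq_\P j$. The nontrivial direction follows because $(\ee_j-\ee_i)(A)\leq 0$ for every $A\in\D$ exactly says that $j\in A$ forces $i\in A$. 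Applying this to both cones, every relation $i<_{\P'}j$ must be a relation $i<_\P j$, hence $\P'\subseteq\P$ and $\D\subseteq\D'$. Finally, to show $\rho'|_\D=\rho$, fix $A\in\D\subseteq\D'$. By Proposition~\ref{prop:inequalities-are-tight} applied to $\ppp'$, some vertex $\vv$ of $\ppp'$ satisfies $\vv(A)=\rho'(A)$; since $\vv\in\ppp$, we get $\rho'(A)\leq\rho(A)$. Conversely, choose $\sigma\in\Le(\P)$ in which $A$ appears as an initial segment (possible because $A\in J(\P)$); then $\xx^\rho_\sigma\in V(\ppp)\subseteq V(\ppp')\subseteq\ppp'$ satisfies $\rho(A)=\xx^\rho_\sigma(A)\leq\rho'(A)$, yielding equality.

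The main obstacle is the first step of the backward direction, where one must translate a polyhedral containment into a lattice containment. The key observation that unlocks this is that the recession cone records exactly the relations of the characteristic poset, via $\ee_j-\ee_i\in\rec(\ppp)\iff i\leq_\P j$, so the poset comparison reduces neatly to a cone comparison. Once this is in place, everything else is routine bookkeeping with the vertex formula and tightness of the submodular inequalities.
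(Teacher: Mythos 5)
Your proof is correct and follows essentially the same route as the paper: constraints give the containment $\ppp'\subseteq\ppp$, the vertex formula of Theorem~\ref{thm:submod-vertex} gives $V(\ppp)\subseteq V(\ppp')$, and tightness (Proposition~\ref{prop:inequalities-are-tight}) combined with the mutual containments recovers $\rho'|_\D=\rho$. The only cosmetic difference is that you deduce $\D\subseteq\D'$ from $\rec(\ppp')\subseteq\rec(\ppp)$ via Theorem~\ref{thm:extreme-ray}, whereas the paper phrases the same fact dually in terms of bounded directions (Definition~\ref{def:bounded-direction}); both are valid and equivalent.
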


\begin{proof}
($\Longrightarrow$) Suppose that $U'$ is a lattice extension of $U$.  The equations~\eqref{define-base-poly} defining $\ppp$ are a subset of those defining $\ppp'$, so $\ppp\supseteq \ppp'$.  Moreover, every direction that is bounded with respect to $\D$ is bounded with respect to $\D'$, so the description of vertices of base polyhedra of submodular systems (Theorem~\ref{thm:submod-vertex}) implies that $V(\ppp)\subseteq V(\ppp')$.
\medskip

($\Longleftarrow$) Suppose that $\ppp'$ is a sheared polyhedron of $\ppp$.  Since $\ppp'\subseteq \ppp$, every direction that is bounded with respect to $\ppp$ is bounded with respect to $\ppp'$.  Therefore $\D\subseteq\D'$.  It remains to show that $\rho'|_\D=\rho$.

Let $A\in\D$.  By Prop.~\ref{prop:inequalities-are-tight}, there exist vertices $\xx\in V(\ppp)$ and $\xx'\in V(\ppp')$ such that $\xx(A)=\rho(A)$ and $\xx'(A)=\rho'(A)$.  On the other hand, the definition of a sheared polyhedron implies that $V(\ppp)\subset \ppp'$ and $V(\ppp')\subset \ppp$.  Therefore, by~\eqref{define-base-poly}, we have
\[\rho(A)=\xx(A)\leq\rho'(A)=\xx'(A)\leq\rho(A)\]
and so equality holds throughout.  Thus $\rho'|_\D=\rho$.
\end{proof}

\begin{cor}\label{cor:recession-sum}
Let $U=(E,\D,\rho)$ and $U'=(E,\D',\rho')$ be U-matroids such that $U$ is a lattice restriction of $U'$. Then $\BBB(U')+\rec(\D)=\BBB(U)$.
(See Theorem~\ref{thm:extreme-ray} for a description of $\rec(\D)$.)
\end{cor}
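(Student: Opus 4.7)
My plan is to combine Theorem~\ref{thm:sheared-correspondence} with Theorem~\ref{thm:extreme-ray} and the Minkowski decomposition of a pointed polyhedron recalled in Section~\ref{sec:polyhedra}.

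First, by Theorem~\ref{thm:sheared-correspondence}, the hypothesis that $U$ is a lattice restriction of $U'$ (equivalently, $U'$ is a lattice extension of $U$) translates into the geometric statement that $\BBB(U')$ is a sheared polyhedron of $\BBB(U)$. Unwinding Definition~\ref{defn:sheared}, this gives two facts that I will use: $\BBB(U')\subseteq\BBB(U)$, and $V(\BBB(U))\subseteq V(\BBB(U'))$. Separately, Theorem~\ref{thm:extreme-ray} identifies $\rec(\BBB(U))$ with $\rec(\D)$.

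For the inclusion $\BBB(U')+\rec(\D)\subseteq\BBB(U)$, I would simply note that since $\rec(\D)=\rec(\BBB(U))$ and $\BBB(U')\subseteq\BBB(U)$, we have
\[\BBB(U')+\rec(\D)\ \subseteq\ \BBB(U)+\rec(\BBB(U))\ =\ \BBB(U),\]
using the defining property of the recession cone. For the reverse inclusion $\BBB(U)\subseteq\BBB(U')+\rec(\D)$, I would invoke the Minkowski decomposition: since $\BBB(U)$ is pointed (it has a vertex, by the definition of a U-matroid polyhedron), it admits the representation $\BBB(U)=\conv(V(\BBB(U)))+\rec(\BBB(U))$. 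Now $V(\BBB(U))\subseteq V(\BBB(U'))\subseteq\BBB(U')$, so $\conv(V(\BBB(U)))\subseteq\BBB(U')$, and combining this with $\rec(\BBB(U))=\rec(\D)$ yields $\BBB(U)\subseteq\BBB(U')+\rec(\D)$.

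I do not anticipate a real obstacle; the corollary follows essentially immediately once one has the sheared-polyhedron correspondence together with the combinatorial description of the recession cone. The only point requiring a moment of care is the appeal to the Minkowski decomposition with the convex hull of vertices as the polytope summand, which presumes that $\BBB(U)$ is pointed; but this is guaranteed by the requirement that a U-matroid polyhedron has at least one vertex.
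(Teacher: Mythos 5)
Your proof is correct and follows essentially the same route as the paper: both directions are obtained exactly as in the paper's argument, using the sheared-polyhedron correspondence for the containment $\BBB(U')+\rec(\D)\subseteq\BBB(U)$ and the decomposition of a pointed polyhedron as the convex hull of its vertices plus its recession cone for the reverse containment. Your explicit remark that pointedness of $\BBB(U)$ is what justifies this decomposition is a small but welcome clarification that the paper leaves implicit.
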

\begin{proof}
    By Theorem~\ref{thm:sheared-correspondence}, $\BBB(U)$ is a sheared polyhedron of $\BBB(U')$.
    Then $\BBB(U')\subseteq \BBB(U)$ and $\rec(\D)$ is the recession cone of $\BBB(U)$, so it is immediate that $\BBB(U')+\rec(\D)\subseteq \BBB(U)$.
    On the other hand, every point in $\BBB(U)$ is a convex combination of the vertices of $V(\BBB(U))$ plus a vector in $\rec(\D)$. Since $V(\BBB(U'))\supseteq V(\BBB(U))$, every such element is also in $\BBB(U')+\rec(\D)$.
\end{proof}

We now characterize sheared polyhedra of a U-matroid polyhedron using Minkowski sums.

\begin{prop}\label{minkmat}
Let $\ppp,\ppp'$ be U-matroid polyhedra. Then $\ppp'$ is a sheared polyhedron of $\ppp$ if and only if $\ppp$ is the Minkowski sum $\ppp'+\rec(\ppp)$.
\end{prop}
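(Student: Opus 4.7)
The plan is to prove the two implications separately, with the forward direction packaging earlier results in this section and the converse requiring a short polyhedral argument.

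\textbf{Forward direction.} Suppose $\ppp'$ is a sheared polyhedron of $\ppp$. Let $U$ and $U'$ be the U-matroids with $\ppp=\BBB(U)$ and $\ppp'=\BBB(U')$. Theorem~\ref{thm:sheared-correspondence} says that $U$ is a lattice restriction of $U'$, and then Corollary~\ref{cor:recession-sum} gives $\BBB(U')+\rec(\D)=\BBB(U)$, where $\D$ is the characteristic lattice of $U$. Since Theorem~\ref{thm:extreme-ray} identifies $\rec(\D)$ with the recession cone of $\BBB(U)=\ppp$, this is exactly $\ppp=\ppp'+\rec(\ppp)$.

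\textbf{Converse.} Suppose $\ppp=\ppp'+\rec(\ppp)$. Taking the zero vector in $\rec(\ppp)$ shows immediately that $\ppp'\subseteq\ppp$, so the remaining task is to prove $V(\ppp)\subseteq V(\ppp')$. Fix $v\in V(\ppp)$ and choose a linear functional $\ell$ that is uniquely minimized at $v$ on $\ppp$. Because $\ell$ is bounded below on $\ppp$, and $\ppp$ contains translates of $\rec(\ppp)$, $\ell$ must be nonnegative on the cone $\rec(\ppp)$. Write $v=p'+r$ with $p'\in\ppp'\subseteq\ppp$ and $r\in\rec(\ppp)$. Then $\ell(v)=\ell(p')+\ell(r)$, but $\ell(p')\geq\ell(v)$ (because $v$ minimizes $\ell$ on $\ppp$) and $\ell(r)\geq 0$, so both inequalities must be equalities. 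In particular $\ell(p')=\ell(v)$, and unique minimization on $\ppp$ forces $p'=v$, so $v\in\ppp'$. Then $v$ is the unique minimizer of $\ell$ on the smaller set $\ppp'\subseteq\ppp$, exhibiting $v$ as a vertex of $\ppp'$.

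The crux of the argument, and the only nontrivial step, is the observation that $\ell\geq 0$ on $\rec(\ppp)$ whenever $\ell$ is bounded below on $\ppp$; once this is in hand, everything reduces to a clean unique-minimization argument. I do not foresee any substantive obstacle, and no properties of U-matroids beyond what is encoded in Theorem~\ref{thm:sheared-correspondence} and Corollary~\ref{cor:recession-sum} are needed.
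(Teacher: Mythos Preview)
Your proof is correct. The interesting point is that you and the paper essentially swap methods between the two directions. For the forward implication, the paper argues directly (every point of $\ppp$ is a convex combination of vertices plus a recession vector, and $V(\ppp)\subseteq V(\ppp')$), whereas you invoke Theorem~\ref{thm:sheared-correspondence} and Corollary~\ref{cor:recession-sum}; since the proof of Corollary~\ref{cor:recession-sum} is itself that same direct argument, these are equivalent. For the converse, the paper goes through the combinatorial correspondence: it observes that $\ppp'+\rec(\ppp)$ is the base polyhedron of the lattice restriction of $U'$ to $\D$ (via Corollary~\ref{cor:recession-sum}), concludes $\rho'|_\D=\rho$, and then applies Theorem~\ref{thm:sheared-correspondence}. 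Your converse is instead a purely polyhedral unique-minimization argument that uses nothing about U-matroids. Your route is more elementary and in fact proves the statement for arbitrary pointed polyhedra; the paper's route has the virtue of keeping everything within the submodular/lattice dictionary but tacitly needs the step $\rec(\ppp')\subseteq\rec(\ppp)$ (hence $\D\subseteq\D'$) before Corollary~\ref{cor:recession-sum} can be applied.
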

See Example~\ref{ex:halfstalact}, where $\ssstop$ is the Minkowski sum of the sheared polytope $\ssstop'$ with the recession cone of $\ssstop$ (namely, the ray generated by $e_4-e_1$).
\begin{proof}
We first show that if $\ppp'$ is a sheared polyhedron of $\ppp$, then $\ppp'+\rec(\ppp)=\ppp$. Since $\ppp'\subseteq \ppp$ (by definition of sheared polyhedron), it is immediate that $\ppp'+\rec(\ppp)\subseteq \ppp$.  On the other hand, every point in $\ppp$ can be expressed as the sum of a point on a bounded face of $\ppp$ with a direction in which $\ppp$ is unbounded, and since $V(\ppp')\supseteq V(\ppp)$, it follows that in fact $\ppp'+\rec(\ppp)=\ppp$.

Now suppose that $\ppp'$ and $\ppp$ are U-matroid polyhedra such that $\ppp'+\rec(\ppp)=\ppp$. Let $(E,\D,\rho)$ and $(E,\D',\rho')$ be the corresponding U-matroids. 
By Corollary~\ref{cor:recession-sum}, the rank function of the U-matroid polyhedron $\ppp'+\rec(\ppp)$ is the lattice restriction of $\rho'$ to $\D$. Since $\ppp'+\rec(\ppp)=\ppp$, we have $\rho'|_{\D}=\rho$. Then the rank function of $\ppp'$ is an extension of the rank function of $\ppp$. Theorem~\ref{thm:sheared-correspondence} now shows that $\ppp'$ is a sheared polyhedron of $\ppp$.
\end{proof}

For any polyhedron $\qqq\subseteq \ppp$ it is automatic that $\qqq+\rec(\ppp)\subseteq \ppp$ as well, but this containment can easily be strict.  Proposition~\ref{minkmat} says that, actually, for sheared polyhedra, equality holds.  Observe also that the corollary does not depend on the characterization of recession cones of submodular systems (Theorem~\ref{thm:extreme-ray}).

\begin{example}\label{ex:stal-extensions}
    Consider the stalactite $\sss$ of Figure~\ref{fig:stalactite}. The convex hull of the four black points together 0011 and 0101 is the uniform matroid polytope, and hence a sheared polytope of $\sss$. The convex hull of the four black points with \emph{either} of 0011 or 0101 is a sheared polytope contained within $\sss$, but the four black points by themselves do not span a U-matroid polytope and hence do not span a sheared polytope of $\sss$. These three sheared polytopes of $\sss$  correspond to the three matroid lattice extensions of $\rho$, which necessarily give $\rho(4)=1\leq\rho(24),\rho(34)$ and must set $\rho(24)$, $\rho(34)$, or both equal to $2$.
\end{example}

\subsection{Generous extensions} \label{sec:generous}

Let $(E,\D,\rho)$ be a U-matroid with $\D\neq 2^E$.  Since $\D$ is assumed to be accessible, its atoms are all singleton subsets of $E$.  Define $\Atom(\D)=\{a\in E\st\{a\}\in\D\}$.

Suppose that $a\in E\sm\Atom(\D)$.  Let $\D[a]$ be the distributive sublattice of $2^E$ generated by $\D\cup\{\{a\}\}$, i.e.,
\begin{equation} \label{define-Da}
\D[a] = \D\cup\{S\cup\{a\}\st S\in\D\}.
\end{equation}
More generally, for $A\subseteq E$, let $\D[A]$ be the distributive sublattice of $2^E$ generated by $\D\cup \{ \{a\} : a \in A\}$, i.e.,
\begin{equation} \label{define-DA}
\D[A] = \D\cup\{S\cup A'\st S\in\D,\ A'\subseteq A\}.
\end{equation}

\begin{defn} \label{defn:generous-atom}
With the foregoing setup, the \defterm{generous atom extension} of $\rho$ to $\D[a]$ is the function $\rho_a:\D[a]\to\Nn$ defined by
\begin{subnumcases}{\rho_a(S)=}
\rho(S) & if $S\in\D$, \label{genX:1}\\
\rho(S-a) & if $S\not\in\D$ and $\rho(S-a)=\rho(\supn_\D(S))$, \label{genX:2}\\
\rho(S-a)+1 & if $S\not\in\D$ and $\rho(S-a)<\rho(\supn_\D(S))$. \label{genX:3}
\end{subnumcases}
\end{defn}

To explain the term ``generous'', appending $a$ to $S$ increases its rank whenever possible, i.e., except when
the condition $\rho(S-a)=\rho(\supn_\D(S))$
forces $\rho_a(S-a)=\rho_a(S)=\rho_a(T)$.

\begin{remark} \label{rmk:magnanimous}
The results and proofs of \S\ref{sec:sheared-basics} regarding sheared polyhedra and lattice restriction/extension for U-matroids (Definition~\ref{defn:sheared}--Proposition~\ref{minkmat}) remain valid without change if ``U-matroid'' is replaced by ``submodular system'' throughout.  Accordingly, it is natural to ask whether generous extension makes sense for general submodular systems as well.  Indeed, there is a much simpler submodular lattice extension of $\rho$ to $\D'\supseteq\D$, namely the rank function $\rho'(S)=\rho(\supn_\D(S))$, which we call the \defterm{magnanimous extension}.  It is well-defined for any lattice $\D'\supseteq\D$, and proving submodularity is much easier than for generous atom extensions.  On the other hand, the magnanimous extension has the disadvantage that it does not  generally take U-matroids to U-matroids.  (For example, take $\D=\{\0,1,12\}$, $\rho(A)=|A|$, and $\D'=2^E$.)  We will return to it briefly later; see Remark~\ref{rmk:magnanimous2}.
\end{remark}

\begin{thm} \label{thm:Dextend}
The triple $(E,\D[a],\rho_a)$ is a U-matroid (hence a lattice extension of $(E,\D,\rho)$).
\end{thm}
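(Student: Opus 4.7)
The plan is to verify each axiom of Definition~\ref{def:U-matroid} for $(E,\D[a],\rho_a)$ in turn. First, $\D[a]$ as described in~\eqref{define-Da} is a distributive sublattice of $2^E$ containing $\D$, and it is accessible because any $T\in\D[a]\sm\D$ has $T-a\in\D\subseteq\D[a]$. Calibration, integrality, and the fact that $\rho_a|_\D=\rho$ (which yields the lattice extension conclusion) are immediate from Definition~\ref{defn:generous-atom}. The central observation unifying the remaining axioms is that cases~\eqref{genX:2} and~\eqref{genX:3} can be merged into the single formula
\[
\rho_a(T)=\min\bigl(\rho(T-a)+1,\ \rho(\supn_\D(T))\bigr)\qquad\text{for } T\in\D[a]\sm\D,
\]
using integrality to promote the strict inequality in~\eqref{genX:3} to $\rho(T-a)+1\le\rho(\supn_\D(T))$. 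In particular, $\rho_a(T)\in\{\rho(T-a),\,\rho(T-a)+1\}$ and $\rho_a(T)\le\rho(\supn_\D(T))$ in both cases.

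For monotonicity, given $A\subseteq B$ in $\D[a]$, I would case on the locations of $A$ and $B$ relative to $\D$. Most subcases reduce directly to $\rho$-monotonicity applied to $\supn_\D(A)\subseteq\supn_\D(B)$, or (when applicable) to $A-a\subseteq B-a$ in $\D$. The subtle subcase is $A\in\D$ with $a\in A$ and $B\in\D[a]\sm\D$: here I would first note that $A-a\in\D$ must hold, for otherwise $\supn_\D(A-a)=A\ni a$ while $B-a\in\D$ contains $A-a$ but not $a$, a contradiction. Given $A-a\in\D$, unit increase and monotonicity for $\rho$ yield $\rho(A)\le\rho(A-a)+1\le\rho(B-a)+1$, and combining with $\rho(A)\le\rho(\supn_\D(B))$ gives $\rho(A)\le\rho_a(B)$. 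Unit increase is then verified cover-by-cover in $\D[a]$: a cover $T_1\coveredby T_2$ with $T_1\in\D$ and $T_2\in\D[a]\sm\D$ must satisfy $T_2=T_1\cup\{a\}$, because otherwise $T_2=T_1\cup\{x\}$ with $x\ne a$ and lattice closure applied to $T_1$ and $T_2-a=(T_1-a)\cup\{x\}\in\D$ would force $T_1\cup\{x\}\in\D$, a contradiction. The remaining covers (within $\D$, within $\D[a]\sm\D$, or from $\D[a]\sm\D$ up into $\D$) are handled by combining the $\min$ formula with $\rho$-unit-increase and $\supn_\D(T_1)\subseteq\supn_\D(T_2)$.

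The main obstacle is submodularity: for arbitrary $A,B\in\D[a]$, establishing $\rho_a(A)+\rho_a(B)\ge\rho_a(A\cup B)+\rho_a(A\cap B)$. I would case-split on which of the four sets lie outside $\D$; when all four lie in $\D$, the inequality inherits directly from submodularity of $\rho$. In the other cases I would substitute the $\min$ formula and reduce to a combination of (i) submodularity of $\rho$ applied to the pairs $(A-a,B-a)$ and $(\supn_\D(A),\supn_\D(B))$, (ii) the identity $\supn_\D(A\cup B)=\supn_\D(A)\cup\supn_\D(B)$ from~\eqref{sup-facts}, and (iii) the relationship between $(A\cap B)-a$ and $(A-a)\cap(B-a)$. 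The delicate subcase is when $\rho_a(A)$ and $\rho_a(B)$ are computed via different branches of the minimum: each $+1$ contribution on the left must be accounted for by a rank gap of the form $\rho(\supn_\D(\cdot))-\rho(\cdot-a)$ on the right. Because the second containment in~\eqref{sup-facts} may be strict, this matching requires careful choice of which argument of each right-hand minimum to bound against, and I expect this bookkeeping to be the most technical part of the proof.
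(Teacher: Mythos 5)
Your reformulation $\rho_a(T)=\min\bigl(\rho(T-a)+1,\ \rho(\supn_\D(T))\bigr)$ for $T\in\D[a]\sm\D$ is correct and tidy, and your treatment of monotonicity and unit increase is sound (the paper instead handles both at once by writing $\rho_a(S')-\rho_a(S)=p+q-r$ with $p,q,r\in\{0,1\}$ and ruling out the patterns $(0,0,1)$ and $(1,1,0)$, but your cover-by-cover argument, including the observation that a cover from $\D$ into $\D[a]\sm\D$ must adjoin $a$ itself, works).

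The genuine gap is submodularity, which is the heart of the theorem, and your proposal stops exactly where the difficulty begins. The toolkit you commit to --- $\rho$-submodularity applied to the pairs $(A-a,B-a)$ and $(\supn_\D(A),\supn_\D(B))$ --- does not close the mixed-branch case. Concretely, suppose $A,B\in\D[a]\sm\D$ with $\rho_a(A)=\rho(A-a)+1$ but $\rho_a(B)=\rho(B-a)=\rho(\supn_\D(B))$. Applying submodularity to $(\supn_\D(A),\supn_\D(B))$ bounds $\rho(\supn_\D(A))+\rho(\supn_\D(B))$ from below, but the left-hand side you need to bound is $\rho_a(A)+\rho_a(B)$, which is $\leq\rho(\supn_\D(A))+\rho(\supn_\D(B))$ and here strictly smaller --- the inequality points the wrong way. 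Applying submodularity to $(A-a,B-a)$ gives $\rho_a(A)+\rho_a(B)\geq\rho((A\cup B)-a)+\rho((A\cap B)-a)+1$, which falls one unit short of the needed $\rho_a(A\cup B)+\rho_a(A\cap B)$ unless you can show that at least one of $A\cup B$, $A\cap B$ lands in the branch~\eqref{genX:2}; that fact is true but requires a further application of $\rho$-submodularity to a \emph{mixed} pair such as $\bigl(\supn_\D(B),\,(A\cup B)-a\bigr)$. The paper's proof resolves all of this by choosing, in each of its cases 2a, 2b, 3a, 3b, 3c, a mixed pair tailored to the branches in play (e.g., $(\supn_\D(S_1),\,S_2-a)$ in case 3b), applying $\rho$-submodularity there, and then descending to $S_1\cup S_2$ and $S_1\cap S_2$ using the monotonicity and unit-increase of $\rho_a$ that were established first. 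Your sentence ``I expect this bookkeeping to be the most technical part'' is accurate, but it is precisely the content that a proof of this theorem must supply.
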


\begin{proof}
Calibration and integrality are immediate from the construction of $\rho_a$.  Let $S,S'\in\D[a]$ with $S'=S\cup\{e\}$.  If $S\in\D$ then either $S'\in\D$, when monotonicity and unit-increase for $\rho_a$ follow from those properties for $\rho$, or $e=a$, when they follow from the definition.  Suppose that $S\notin\D$, so in particular $a\in S$ and $e\neq a$.
Then
\[\rho_a(S')-\rho_a(S)
= \underbrace{\rho_a(S')-\rho_a(S'-a)}_p ~+~ 
\underbrace{\rho_a(S'-a)-\rho_a(S-a)}_q ~-~
\underbrace{\big(\rho_a(S)-\rho_a(S-a)\big)}_r
\]
Each of $p,q,r$ equals 0 or 1, either by monotonicity and unit-increase for $\rho$, or by definition of $\rho_a$.  Therefore, $\rho_a(S')-\rho_a(S)\in\{0,1\}$ unless either $(p,q,r)=(0,0,1)$ or $(p,q,r)=(1,1,0)$.
\begin{itemize}
\item If $p=q=0$, then $\rho_a(S')=\rho_a(S-a)$, and since $S'\supseteq S\supseteq S-a$ all three sets must have equal ranks, so $r=0$, ruling out the possibility $(p,q,r)=(0,0,1)$.
\item If $r=0$, then $\rho_a(S)=\rho(T)$, where $T=\supn_\D(S)$.  In particular $a\in T$, so $T\cup\{e\}=T\cup(S'-a)\in\D$ as well, and
\[\rho(T\cup\{e\})\leq \rho(T)+1=\rho(S-a)+1=\rho_a(S')-p-q+1\leq \rho(T\cup\{e\})-p-q+1\]
which implies $p+q\leq 1$, ruling out the possibility $(p,q,r)=(1,1,0)$.
\end{itemize}
\medskip

Now we prove that $\rho_a$ is submodular.  In all cases, let $S_1,S_2\in\D[a]$, $T_1=\supn_\D(S_1)$ and $T_2=\supn_\D(S_2)$.
\medskip

\noindent\underline{\textbf{Case 1:}  $S_1,S_2\in\D$.}
Then submodularity for $\rho_a$ reduces to submodularity for $\rho$.
\medskip

\noindent\underline{\textbf{Case 2:}  $S_1\in\D$ and $S_2\notin\D$} (w.l.o.g.).
Then $a\in S_2$ and $S_2-a\in\D$.
\medskip

\underline{\textbf{Case 2a:} $\rho(S_2-a)=\rho(T_2)$.} Then
\begin{align*}
\rho_a(S_1)+\rho_a(S_2) 
&= \rho(S_1)+\rho(T_2) \\
&\geq \rho(S_1\cup T_2)+\rho(S_1\cap T_2) && \textup{(by submodularity of $\rho$)}\\
&= \rho_a(S_1\cup T_2)+\rho_a(S_1\cap T_2)\\
&\geq \rho_a(S_1\cup S_2)+\rho_a(S_1\cap S_2) && \textup{(by monotonicity)}.
\end{align*}

\underline{\textbf{Case 2b:} $\rho(S_2-a)<\rho(T_2)$.}  Then
\begin{align*}
\rho_a(S_1)+\rho_a(S_2) 
&= \rho(S_1)+\rho(S_2-a) + 1 \\
&\geq \rho(S_1\cup(S_2-a))+\rho(S_1\cap(S_2-a)) + 1 && \textup{(by submodularity of $\rho$)}\\
&= \rho_a(S_1\cup(S_2-a))+\rho_a(S_1\cap(S_2-a)) + 1\\
&= \rho_a(S_1\cup(S_2-a))+\rho_a((S_1\cap S_2)-a) + 1\\
&= \begin{cases}
\rho_a(S_1\cup S_2)+\rho_a((S_1\cap S_2)-a) + 1 & \textup{ if $a\in S_1$}\\
\rho_a(S_1\cup(S_2-a))+\rho_a(S_1\cap S_2) + 1 & \textup{ if $a\notin S_1$}
\end{cases}\\
&\geq \rho_a(S_1\cup S_2)+\rho_a(S_1\cap S_2) && \textup{(by unit-increase)}.
\end{align*}

\noindent\underline{\textbf{Case 3:} $S_1,S_2\notin\D$.}  Then $a\in S_1\cap S_2$ and $S_1-a,S_2-a\in\D$.
\medskip

\underline{\textbf{Case 3a:} $\rho(S_i-a)=\rho(T_i)$ for both $i=1$ and $i=2$.}  Then
\begin{align*}
\rho_a(S_1)+\rho_a(S_2) 
&= \rho(T_1)+\rho(T_2) \\
&\geq \rho(T_1\cup T_2)+\rho(T_1\cap T_2) && \textup{(by submodularity of $\rho$)}\\
&= \rho_a(T_1\cup T_2)+\rho_a(T_1\cap T_2)\\
&\geq \rho_a(S_1\cup S_2)+\rho_a(S_1\cap S_2) && \textup{(by monotonicity)}.
\end{align*}

\underline{\textbf{Case 3b:} $\rho(S_1-a)=\rho(T_1)$ and $\rho(S_2-a)<\rho(T_2)$.}  Then
\begin{align*}
\rho_a(S_1)+\rho_a(S_2) 
&= \rho(T_1)+\rho(S_2-a) + 1 \\
&\geq \rho(T_1\cup(S_2-a))+\rho(T_1\cap(S_2-a)) + 1 && \textup{(by submodularity of $\rho$)}\\
&= \rho_a(T_1\cup(S_2-a))+\rho_a(T_1\cap(S_2-a)) + 1\\
&= \rho_a(T_1\cup S_2)+\rho_a((T_1\cap S_2)-a) + 1\\
&\geq \rho_a(S_1\cup S_2)+\rho_a((S_1\cap S_2)-a) + 1 && \textup{(by monotonicity)}\\
&\geq \rho_a(S_1\cup S_2)+\rho_a(S_1\cap S_2) && \textup{(by unit-increase)}.
\end{align*}

\underline{\textbf{Case 3c:} $\rho(S_i-a)<\rho(T_i)$ for both $i=1$ and $i=2$.}  Then
\begin{align*}
\rho_a(S_1)+\rho_a(S_2) 
&= \rho(S_1-a)+\rho(S_2-a) + 2 \\
&\geq \rho((S_1-a)\cup(S_2-a))+\rho((S_1-a)\cap(S_2-a)) + 2 && \textup{(by submodularity of $\rho$)}\\
&= \rho_a((S_1\cup S_2)-a)+\rho_a((S_1\cap S_2)-a) + 2\\
&\geq \rho_a(S_1\cup S_2)+\rho_a(S_1\cap S_2) && \textup{(by unit-increase)}. 
\qedhere
\end{align*}
\end{proof}

Intuitively, $\rho_a$ is designed so that appending $a$ to an element of $\D$ increases its rank whenever possible; the proof makes this precise.  Given two real-valued functions $\rho,\phi$ on $\D$, we say that $\phi$ \defterm{dominates} $\rho$ if $\phi(S)\geq\rho(S)$ for every $S\in\D$.  Similarly, if $\rho,\phi$ are submodular functions, we say that the U-matroid $(E,\D,\phi)$ dominates the U-matroid $(E,\D,\rho)$.

\begin{prop} \label{generous-dominates-slightlystronger}
Let $(E,\D,\rho)$ and $(E,\D,\phi)$ be U-matroids such that~$\phi$ dominates~$\rho$.  Then the generous atom extension $\phi_a$ of $\phi$ to $\D[a]$ dominates every lattice extension of $\rho$ to $\D[a]$.
\end{prop}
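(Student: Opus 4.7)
The plan is to fix an arbitrary lattice extension $\rho' : \D[a] \to \Nn$ of $\rho$ and show that $\phi_a(S) \geq \rho'(S)$ for every $S \in \D[a]$, by a short case analysis following the three-part definition of $\phi_a$ in Definition~\ref{defn:generous-atom}. The key fact I will lean on is that $\rho'$, being a U-matroid rank function, is monotone and satisfies unit-increase, and that $\rho'|_\D = \rho$.

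First I would dispose of the easy case $S \in \D$: here $\phi_a(S) = \phi(S) \geq \rho(S) = \rho'(S)$, using that $\phi$ dominates $\rho$ on $\D$.

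The substantive work is the case $S \in \D[a] \setminus \D$. From~\eqref{define-Da}, such an $S$ must be of the form $S_0 \cup \{a\}$ with $S_0 \in \D$ and $a \notin S_0$, so $S-a \in \D$. Set $T = \supn_\D(S)$; note $a \in S \subseteq T \in \D$. I then split into the two subcases defining $\phi_a$:
\begin{itemize}
\item If $\phi(S-a) = \phi(T)$, then $\phi_a(S) = \phi(S-a) = \phi(T)$. Since $S \subseteq T$ and $T \in \D$, monotonicity of $\rho'$ plus domination of $\phi$ over $\rho$ gives
\[
\rho'(S) \leq \rho'(T) = \rho(T) \leq \phi(T) = \phi_a(S).
\]
\item If $\phi(S-a) < \phi(T)$, then $\phi_a(S) = \phi(S-a) + 1$. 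Since $S = (S-a) \cup \{a\}$ with $S-a \in \D$, unit-increase for $\rho'$ followed by $\rho'(S-a) = \rho(S-a) \leq \phi(S-a)$ yields
\[
\rho'(S) \leq \rho'(S-a) + 1 = \rho(S-a) + 1 \leq \phi(S-a) + 1 = \phi_a(S).
\]
\end{itemize}

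I do not expect a real obstacle here: the generous atom extension is specifically designed to be the pointwise largest lattice extension of a U-matroid rank function, and each of the two displayed chains of inequalities is a one-line deduction from the standard matroid axioms together with domination. The only place one must be even slightly careful is verifying that $S - a \in \D$ and $a \in T$, which follow directly from the definition of $\D[a]$ and from $S \subseteq T$ respectively.
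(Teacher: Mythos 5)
Your proof is correct and follows essentially the same route as the paper's: the same three-way case split according to Definition~\ref{defn:generous-atom}, with monotonicity of $\rho'$ through $\supn_\D(S)$ in the case $\phi(S-a)=\phi(\supn_\D(S))$ and unit-increase from $S-a$ in the case $\phi(S-a)<\phi(\supn_\D(S))$. No gaps.
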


\begin{proof}
Let $\rho'$ be a lattice extension of $\rho$ to $\D[a]$, and let $S\subseteq E$. If $S\in\D$, then evidently $\rho'(S)=\rho(S)\leq\phi(S)=\phi_a(S)$ by~\eqref{genX:1}.  Accordingly, consider the case that $S\not\in\D$ and so $S-a\in\D$.

Suppose first that $\phi(\supn_\D(S))=\phi(S-a)$. Then by~\eqref{genX:2}, $\phi(S-a)=\phi_a(S)=\phi_a(\supn_\D(S))$.  So
\begin{align*}
\rho'(S)
&\leq\rho'(\supn_\D(S))&&\textup{(by monotonicity)}\\
&\leq\phi_a(\supn_\D(S))&&(\supn_\D(S)\in\D)\\
&=\phi_a(S).
\end{align*}
On the other hand, if $\phi(\supn_\D(S))>\phi(S-a)$, then by \eqref{genX:3}, $\phi(S)=\phi(S-a)+1$. So
\begin{align*}
\phi'(S)
&\leq\rho'(S-a)+1&&\textup{(by unit increase)}\\
&\leq\phi_a(S-a)+1&&(S-a\in\D)\\
&=\phi_a(S).\qedhere
\end{align*}
\end{proof}

\begin{thm}\label{generous-dominates-full}
Let $U=(E,\D,\rho)$ be a U-matroid with $E\sm\Atom(\D)=\{a_1,\dots,a_m\}$. Then:
\begin{enumerate}
\item The iterated generous matroid extension $\hat\rho=\rho_{a_1,\dots,a_m}=(((\rho_{a_1})_{a_2})\dots)_{a_m}$ dominates every other matroid extension of $\rho$ to $2^E$.
\item In particular, $\hat\rho$ is an invariant of $\rho$ that is independent of the order of the $a_i$ and dominates every other matroid extension of $\rho$.
\item For every lattice $\D'$ with $\D\subseteq\D'\subseteq 2^E$, the lattice extension $\hat\rho|_{\D'}$ dominates every other lattice extension of $U$ to $\D'$.
\end{enumerate}
\end{thm}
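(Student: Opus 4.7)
The plan is to establish (1) by induction on $m$, applying Proposition~\ref{generous-dominates-slightlystronger} at each step, and then to deduce (2) and (3) as formal consequences.

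For (1), I would set $\D_k = \D[a_1,\dots,a_k]$ and $\phi_k = \rho_{a_1,\dots,a_k}$ for $0 \le k \le m$, so that $\D_0 = \D$, $\phi_0 = \rho$, $\D_m = 2^E$, and $\phi_m = \hat\rho$. Iterated application of Theorem~\ref{thm:Dextend} ensures that $(E,\D_k,\phi_k)$ is a U-matroid for every $k$. Now fix an arbitrary matroid extension $\psi$ of $\rho$ to $2^E$; restricting a matroid rank function to any accessible sublattice preserves the axioms of Definition~\ref{def:U-matroid}, so each $\psi|_{\D_k}$ is also a U-matroid rank function on $\D_k$. I would prove by induction on $k$ that $\phi_k$ dominates $\psi|_{\D_k}$. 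The base case $k=0$ is immediate since $\psi$ extends $\rho = \phi_0$. For the inductive step, I would apply Proposition~\ref{generous-dominates-slightlystronger} on the lattice $\D_k$, taking its ``$\rho$'' to be $\psi|_{\D_k}$ and its ``$\phi$'' to be $\phi_k$, with the required dominance being the inductive hypothesis. The conclusion yields that $\phi_{k+1} = (\phi_k)_{a_{k+1}}$ dominates every lattice extension of $\psi|_{\D_k}$ to $\D_{k+1}$, and in particular dominates $\psi|_{\D_{k+1}}$. Setting $k=m$ gives $\hat\rho \geq \psi$ on $2^E$, which is (1).

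For (2), any iterated generous matroid extension $\hat\rho^\sharp$ arising from a different ordering of $E \sm \Atom(\D)$ is itself a matroid extension of $\rho$ on $2^E$, so applying (1) to $\rho$ (with the original ordering) gives $\hat\rho \geq \hat\rho^\sharp$; rerunning the argument of (1) with the alternate ordering gives the reverse inequality, forcing equality. For (3), given a lattice extension $\rho'$ of $\rho$ to $\D'$ with $\D \subseteq \D' \subseteq 2^E$, I would apply part (1) to the U-matroid $(E,\D',\rho')$ to produce its own iterated generous matroid extension $\widehat{\rho'}$ on $2^E$, which by construction restricts to $\rho'$ on $\D'$. Since $\widehat{\rho'}$ is also a matroid extension of $\rho$, part (1) applied to $\rho$ gives $\hat\rho \geq \widehat{\rho'}$ on $2^E$; restricting to $\D'$ then yields $\hat\rho|_{\D'} \geq \widehat{\rho'}|_{\D'} = \rho'$.

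The main point to verify carefully, and the most likely source of obstacle, is that at each step of the induction in (1) the hypotheses of Proposition~\ref{generous-dominates-slightlystronger} are genuinely met: that $\D_k$ is an accessible sublattice of $2^E$ (automatic from the iterative definition of $\D[A]$), that $\phi_k$ is a U-matroid rank function on $\D_k$ (Theorem~\ref{thm:Dextend} applied $k$ times), and that $\psi|_{\D_k}$ is one as well (by inheritance of the axioms of Definition~\ref{def:U-matroid} to any accessible sublattice). Once these bookkeeping items are in place, the inductive machinery runs mechanically and parts (2) and (3) are immediate.
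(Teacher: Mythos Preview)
Your proposal is correct and follows essentially the same approach as the paper: iterate Proposition~\ref{generous-dominates-slightlystronger} along the chain $\D=\D_0\subset\D_1\subset\cdots\subset\D_m=2^E$ to obtain (1), deduce order-independence in (2) by mutual domination, and obtain (3) by extending an arbitrary lattice extension on $\D'$ to a matroid extension and applying (1). Your write-up is somewhat more explicit than the paper's (particularly for part (3), which the paper declares ``immediate''), but the underlying argument is the same.
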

Accordingly, we say that $\hat\rho|_{\D'}$ is the \defterm{generous extension of $\rho$ to $\D'$}, and $\hat\rho$ is the \defterm{generous matroid extension} of~$\rho$.

\begin{proof}
1.   Let $(E,2^E,\rho')$ be any matroid extension of $\rho$. By Proposition~\ref{generous-dominates-slightlystronger}, $\hat\rho|_{\D[a_1]}$ dominates $\rho'|_{\D[a_1]}$. Then these latter two restrictions fill the hypothesis of the same theorem; applying it again yields that $\hat\rho|_{\D[a_1,a_2]}$ dominates $\rho'|_{\D[a_1,a_2]}$; Proposition~\ref{generous-dominates-slightlystronger} can be thus applied recursively until we get that $\hat\rho$ dominates $\rho'$.
\medskip

2. By assertion~(1), no matter the order of the $a_i$, the iterated generous atom extension $\rho_{a_1,\dots,a_m}$ dominates all matroid extensions of $\rho$. Thus, all such generous extensions must be equal.
\medskip

3. Immediate from assertion~(2).
\end{proof}

A first consequence of the existence of generous extensions is that U-matroid rank functions on $E$ are precisely the restrictions of matroid rank functions to accessible distributive sublattices of $2^E$.

\begin{cor}\label{cor:any-is-restrict}
    U-matroid rank functions on $E$ are precisely the lattice restrictions of matroid rank functions to accessible distributive sublattices of $2^E$.
\end{cor}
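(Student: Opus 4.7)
The plan is to establish both directions of the claimed equivalence. The forward direction (every U-matroid rank function is the restriction of a matroid rank function) is essentially a repackaging of the generous matroid extension construction, while the reverse direction is a short axiom check. Neither step looks like it will present serious difficulty, since the heavy lifting has already been done in Theorem~\ref{generous-dominates-full}.

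For the forward direction, I would start with an arbitrary U-matroid $U = (E,\D,\rho)$ and appeal directly to Theorem~\ref{generous-dominates-full} to produce the generous matroid extension $\hat\rho : 2^E \to \Nn$, which is a matroid rank function on $E$ since $(E,2^E,\hat\rho)$ satisfies all U-matroid axioms on the full Boolean lattice. It then remains only to verify that $\hat\rho|_\D = \rho$, i.e., that each step of the iterated generous atom extension genuinely extends the previous one. This is immediate from clause~\eqref{genX:1} of Definition~\ref{defn:generous-atom}: the generous atom extension $\rho_a$ agrees with $\rho$ on all of $\D$ by construction, so by induction on the enumeration $a_1,\dots,a_m$ of $E\sm\Atom(\D)$, each intermediate rank function $\rho_{a_1,\dots,a_i}$ restricts to $\rho$ on $\D$, and therefore so does $\hat\rho$.

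For the reverse direction, I would take an arbitrary matroid rank function $\mu$ on $E$ and an arbitrary accessible distributive sublattice $\D\subseteq 2^E$ (automatically containing $\0$ and $E$) and check that $\rho := \mu|_\D$ satisfies the axioms of Definitions~\ref{def:submodsyst} and~\ref{def:U-matroid}. Calibration, integrality, monotonicity, and unit-increase are immediate by restriction, since comparable pairs in $\D$ remain comparable in $2^E$ and every covering relation in $\D$ (by accessibility) is also a covering relation in $2^E$. Submodularity is the only condition using more than one element at a time, and it requires both $A\cup B$ and $A\cap B$ to lie in the domain; this is exactly where closure of $\D$ under joins and meets is used, and after that the inequality is just submodularity of $\mu$ applied to two elements of $\D$.

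The hardest part of the statement was the existence of a canonical matroid extension, which is already settled by Theorem~\ref{generous-dominates-full}; once that is in hand, the corollary is a short assembly of the two directions above, and I do not anticipate any real obstacle.
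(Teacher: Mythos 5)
Your proof is correct and follows exactly the same route as the paper's: the reverse direction is the routine axiom check that restriction to an accessible distributive sublattice preserves calibration, integrality, monotonicity, unit-increase, and submodularity (the last using closure of $\D$ under union and intersection), and the forward direction invokes the generous matroid extension of Theorem~\ref{generous-dominates-full}, which restricts to $\rho$ on $\D$ by construction. No gaps.
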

\begin{proof}
    It follows from the definitions that the lattice restriction of a matroid rank function is a U-matroid rank function. On the other hand, any U-matroid rank function $(E,\D,\rho)$ is the lattice restriction of its generous matroid extension.
\end{proof}

We next describe the independence and basis polytopes of a generous matroid extension.

\begin{thm} \label{generous-polytopes}
Let $U=(E,\D,\rho)$ be a U-matroid with generous matroid extension $\hat U$. Then
\begin{align} 
\BBB(\hat U) &= \conv(\BBB(U)\cap\{0,1\}^n) = \BBB(U)\cap[0,1]^n, \textup{ and} \label{generous-base-poly}\\
\III(\hat U) &=  \conv(\AAA(U)\cap\{0,1\}^n) = \AAA(U)\cap[0,1]^n. \label{generous-indep-poly}
\end{align}
\end{thm}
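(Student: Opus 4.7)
The plan is to reduce both displayed equalities to a single key technical inequality.  Specifically, I would prove the following

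\emph{Key Claim}: If $\xx\in[0,1]^n$ satisfies $\xx(A')\leq\rho(A')$ for every $A'\in\D$, then $\xx(A)\leq\hat\rho(A)$ for every $A\subseteq E$.

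I would establish this by induction along the tower of lattice extensions $\D=\D_0\subseteq\D_1\subseteq\cdots\subseteq\D_m=2^E$, where $\D_k=\D[a_1,\ldots,a_k]$, with corresponding rank functions $\rho_0=\rho,\rho_1,\ldots,\rho_m=\hat\rho$.  The base case is the hypothesis.  For the inductive step, let $A\in\D_{k+1}$ with $a=a_{k+1}$; the case $A\in\D_k$ is immediate, so assume $a\in A$ and $A-a\in\D_k$.  I would then split according to the two nontrivial clauses of Definition~\ref{defn:generous-atom}: in clause~\eqref{genX:2}, bound $\xx(A)\leq\xx(\supn_{\D_k}(A))\leq\rho_k(\supn_{\D_k}(A))=\rho_{k+1}(A)$ using $\xx\geq 0$ and the inductive hypothesis; in clause~\eqref{genX:3}, bound $\xx(A)=\xx(A-a)+x_a\leq\rho_k(A-a)+1=\rho_{k+1}(A)$ using $x_a\leq 1$ and the inductive hypothesis.

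With the Key Claim in hand, I would derive \eqref{generous-indep-poly} as follows.  Since $\hat\rho|_\D=\rho$, the inclusion $\AAA(\hat U)\cap[0,1]^n\subseteq\AAA(U)\cap[0,1]^n$ is trivial, and the Key Claim supplies the reverse inclusion.  The standard matroid identity $\III(\hat U)=\AAA(\hat U)\cap[0,1]^n$ (which uses unit increase to force $x_i\leq 1$ on the independence polytope) then yields $\III(\hat U)=\AAA(U)\cap[0,1]^n$, and the remaining convex-hull equality follows since the 0/1 points of $\III(\hat U)$ are exactly the characteristic vectors of independent sets of $\hat U$.  Equation \eqref{generous-base-poly} follows by the same scheme: Theorem~\ref{thm:sheared-correspondence} yields $\BBB(\hat U)\subseteq\BBB(U)\cap[0,1]^n$, while the Key Claim together with $\xx(E)=\rho(E)=\hat\rho(E)$ gives the reverse inclusion.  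The convex-hull equality then uses that $\BBB(\hat U)\cap\{0,1\}^n=V(\BBB(\hat U))$ coincides with $\BBB(U)\cap\{0,1\}^n$, since any 0/1 point of $\BBB(\hat U)$ is the characteristic vector of a basis of the matroid $\hat U$.

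The main obstacle is the Key Claim; everything else is bookkeeping.  The two estimates driving its inductive step correspond exactly to the two inequalities that any matroid extension of~$\rho$ must obey, namely unit increase ($\rho'(A)\leq\rho'(A-a)+1$) and monotonicity applied to the lattice closure ($\rho'(A)\leq\rho'(\supn_\D(A))$).  The generous extension is essentially the pointwise minimum of these two bounds, which is precisely what makes the induction tight for arbitrary $\xx\in[0,1]^n$ rather than only for $\xx\in\{0,1\}^n$.
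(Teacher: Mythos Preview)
Your proposal is correct and follows essentially the same approach as the paper: the core argument in both is the two-case split according to clauses~\eqref{genX:2} and~\eqref{genX:3}, bounding $\xx(A)$ by $\xx(\supn_\D(A))$ via nonnegativity in the first case and by $\xx(A-a)+1$ via $x_a\leq 1$ in the second.  The only organizational difference is that the paper first establishes~\eqref{generous-indep-poly} and then obtains~\eqref{generous-base-poly} by intersecting with the hyperplane $\xx(E)=\rho(E)$, whereas you derive the two equalities in parallel from your Key Claim; also, you spell out the induction along the tower of atom extensions explicitly, while the paper compresses this into a single step by invoking Theorem~\ref{generous-dominates-full}.
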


\begin{proof}
We prove~\eqref{generous-indep-poly} first.  It is clear that $\conv(\AAA(U)\cap\{0,1\}^n) \subseteq \AAA(U)\cap[0,1]^n$.  Moreover, the vertices of $\III(\hat U)$ are by definition the characteristic vectors of independent sets, which satisfy the defining inequalities~\eqref{define-submod-poly} of~$\AAA(U)$.  It follows that $\III(\hat U)\subseteq\conv(\AAA(U)\cap\{0,1\}^n)$.

Accordingly, it suffices to prove that $\AAA(U)\cap[0,1]^n\subseteq\III(\hat U)$.  The polytope $\III(\hat U)$ is defined by the inequalities $\xx(i)\geq0$ for all $i\in[n]$ and $\xx(A)\leq\hat\rho(A)$ for all $A\subseteq[n]$ \cite{Edmonds}.  By Theorem~\ref{generous-dominates-full}, it suffices to show that if $\rho_a$ is any generous atom extension of $\rho$ to $\D[a]$, then $\xx(S)\leq\rho_a(S)$ for all $\xx\in\BBB(U)\cap [0,1]^E$ and all $S\in\D[a]\sm\D$.  Note that $S-a\in\D$ in this case.  By Definition~\ref{defn:generous-atom}, there are two cases. First, if $\rho(S-a)=\rho(\supn_\D(S))$, then
    \begin{align*}
        \xx(S)&\leq\xx(\supn_\D(S)) && \textup{(because $x_i\geq 0$ for all $i$)}\\
        &\leq \rho(\supn_\D(S)) && \textup{(because $\xx\in\BBB(U)$)}\\
        &=\rho_a(S) && \textup{(by~\eqref{genX:2}).}
    \end{align*}
Second, if $\rho(S-a)<\rho(\supn_\D(S))$, then
    \begin{align*}
        \xx(S)&\leq\xx(S-a)+1 && \textup{(because $x_a\leq 1$)}\\
        &\leq\rho(S-a)+1 && \textup{(because $\xx\in\BBB(U)$)}\\
        &=\rho_a(S) && \textup{(by~\eqref{genX:3}).}
    \end{align*}
We conclude that $\xx(S)\leq\rho_a(S)$ as desired, completing the proof of~\eqref{generous-indep-poly}.  Now~\eqref{generous-base-poly} follows by intersecting with the hyperplane $\xx(E)=\rho(E)$.
\end{proof}

Theorem~\ref{thm:sheared-correspondence} shows that sheared polyhedra of a U-matroid polyhedron and U-matroid extensions of its rank function are cryptomorphic. We use this to translate Corollary~\ref{cor:any-is-restrict} to the language of polyhedra.

\begin{cor}
    The U-matroid polyhedra over an accessible distributive lattice $\D$ are precisely the Minkowski sums $\BBB(M)+\rec(\D)$, where $M$ is a matroid on $E$.
\end{cor}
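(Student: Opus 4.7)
The plan is to derive this corollary immediately from Corollary~\ref{cor:any-is-restrict} together with Corollary~\ref{cor:recession-sum}; essentially it is a cryptomorphic translation of the former into polyhedral language via Theorem~\ref{thm:sheared-correspondence}.

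For the forward inclusion, I would start with an arbitrary U-matroid polyhedron $\BBB(U)$, where $U=(E,\D,\rho)$.  By Corollary~\ref{cor:any-is-restrict} (which in turn rests on the generous matroid extension from Theorem~\ref{generous-dominates-full}), $\rho$ is the lattice restriction of a matroid rank function $\hat\rho$ to $\D$; setting $M=(E,2^E,\hat\rho)$, we have that $U$ is the lattice restriction of $M$ to $\D$.  Applying Corollary~\ref{cor:recession-sum} with $U'=M$ then yields $\BBB(U)=\BBB(M)+\rec(\D)$, placing $\BBB(U)$ in the desired form.

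For the reverse inclusion, I would begin with an arbitrary matroid $M$ on $E$ and form its lattice restriction $U=M|_{\D}$ (Definition~\ref{defn:lattice-restriction}), which is a U-matroid on $E$ with characteristic lattice $\D$.  A second application of Corollary~\ref{cor:recession-sum} gives $\BBB(M)+\rec(\D)=\BBB(U)$, exhibiting the Minkowski sum as a U-matroid polyhedron over $\D$.

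The argument requires no new technical input: both the existence of a matroid extension of $\rho$ and the Minkowski-sum formula have already been established.  The only thing to double-check is the bookkeeping that the characteristic lattice of $M|_{\D}$ really is $\D$ (which follows directly from the definition of lattice restriction, since the recession cone of $\BBB(M)+\rec(\D)$ equals $\rec(\D)$ by Theorem~\ref{thm:extreme-ray} and $\BBB(M)$ being bounded).  Thus the main (minor) obstacle is just formalizing these invocations cleanly; no substantive new idea is needed.
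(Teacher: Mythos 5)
Your proof is correct and follows essentially the same route as the paper's: both directions rest on the existence of a matroid (generous) extension of $\rho$ from Corollary~\ref{cor:any-is-restrict} together with the Minkowski-sum identity for lattice restrictions (you cite Corollary~\ref{cor:recession-sum} where the paper invokes the equivalent Proposition~\ref{minkmat}, and you spell out the reverse inclusion that the paper dismisses as immediate). No gaps.
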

\begin{proof}
    It is immediate that a Minkowski sum of a matroid polytope with $\rec(\D)$ is a U-matroid polyhedron over $D$. On the other hand, let $U=(E,\D,\rho)$ be a U-matroid with base polyhedron $\ppp=\BBB(U)$. Let $\rho'$ be any matroid extension of $\rho$ (in particular, we may take $\rho$ to be the generous matroid extension, which always exists). Let $\ppp'$ be the matroid polytope of $\rho'$.
    Then Proposition~\ref{minkmat} shows that $\ppp=\ppp'+\rec(\ppp)$.
\end{proof}

\begin{remark}\label{remk:non-gen-ext}
There is not in general a unique \emph{smallest} sheared polytope within a given U-matroid polyhedron.  Consider the stalactite of Figure~\ref{fig:stalactite}. As explained in Example~\ref{ex:stal-extensions}, the convex hull of the four black points together with either of 0011 or 0011 is a sheared polytope of the stalactite, but the four black points by themselves do not span a matroid polytope and hence do not form a sheared polytope.
\end{remark}

\begin{problem} \label{prob:rank-generous}
Determine a formula for the rank function $\hat\rho$ of the generous matroid extension, in terms of $\rho$.
\end{problem}

\begin{remark} \label{generous-formula-doesnt-work}
It is tempting to conjecture the formula
\[\hat\rho(S)=\min(\rho(\supn_\D(S)),\ \rho(\infn_\D(S))+e_\D(S))\]
where $e_\D(S)=\#\{a\in S\sm \infn_\D(S)\ |\ \rho(\supn_\D(\infn_\D(S)\cup a))>\rho(\infn_\D(S))\}$.
This formula for $\hat\rho(S)$ works in many cases, but fails in general.  For example, let $\P$ be the poset on $[6]$ with relations $1<4,2<5,3<6$, let $\D=J(\P)$, and let $\rho$ be the submodular function given as follows (abbreviating, e.g., $\{2,3,5,6\}$ by 2356):
\begin{align*}
\rho^{-1}(0) &= \{\0\}, & \rho^{-1}(2) &= \{14,12,125,13,136,123,1235,1236,12356\},\\
\rho^{-1}(1) &= \{1,2,25,3,36,23,235,236,2356\}, & \rho^{-1}(3) &= \{124,1245,134,1346,1234,12345,12346,123456\}.
\end{align*}
Then the function $\hat\rho$ as defined above is not submodular --- e.g., $\hat\rho(4)=\hat\rho(56)=1$ and $\hat\rho(456)=3$ --- hence cannot be the generous matroid extension of $\rho$. (We do not claim that this counterexample minimizes the size of $E$.)
\end{remark}

\begin{remark}
Suppose that $(E,\D',\rho')$ is a lattice extension of $(E,\D,\rho)$.  What is the relationship between the normal fans of the corresponding U-polyhedra $\ppp'$ and $\ppp$?  Evidently $|\N(\ppp')|\supset|\N(\ppp)|$.  Moreover, every bounded face $\qqq\subset\ppp$ is also a face of $\ppp'$, with $N_{\ppp'}(\qqq)\supseteq N_\ppp(\qqq)$.  (To see this, observe that every vertex of $\qqq$ is a vertex of $\ppp'$, so $\qqq\subseteq\ppp'$, and for any linear functional $\ell\in N_\ppp(F)$,  the face of $\ppp'$ minimized by $\ell$ is $F\cap \ppp'=F$.)  On the other hand, if $\qqq$ is an \textit{unbounded} face of $\ppp$, then many different things can happen: $\qqq\cap\ppp'$ can equal $\qqq$, or become bounded, or drop in dimension, or even coincide with a smaller face of $\ppp$.
\end{remark}

We now describe how the operations of restriction and generous extension affect closure operators and lattices of flats of U-matroids.

\begin{thm}\label{thm:flat-latticerestrict}
Let $U=(E,\D,\rho)$ be a U-matroid with closure operator $\cl$ and lattice of flats $\L$.  Let $\D'\subseteq\D$ be an accessible distributive sublattice, and let $\L'$ be the lattice of flats of the restriction $U_{\D'}=(E,\D',\rho|_{\D'})$.  Then
\[\L'=\{\infn_{\D'}(A)\ |\ A\in\L\}.\]
\end{thm}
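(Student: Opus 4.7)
The plan is to establish the two set inclusions $\supseteq$ and $\subseteq$ separately. Throughout I would use the characterization~\eqref{flats-from-rank-no-sup} of flats together with the Faigle closure operator $\cl_U\colon 2^E\to\D$ defined in~\eqref{ranktoclosure}, since the latter behaves well under comparison of rank functions on nested lattices. The key observation is that although an element $F\in\L'$ need not be a flat of $U$, its $U$-closure $\cl_U(F)$ will be the right element of $\L$ whose $\D'$-infimum recovers $F$.

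For the inclusion $\supseteq$, let $A\in\L$ and set $F=\infn_{\D'}(A)$. To show $F\in\L'$, I would argue by contradiction using~\eqref{flats-from-rank-no-sup}: suppose some $e\in E\sm F$ satisfies $F\cup e\in\D'$ and $\rho(F\cup e)=\rho(F)$. Since $F,F\cup e\in\D'\subseteq\D$, we have $\supn_\D(F)=F$ and $\supn_\D(F\cup e)=F\cup e$, so~\eqref{ranktoclosure} immediately gives $e\in\cl_U(F)$. Because $A$ is a flat, $\cl_U(A)=A$, and Faigle's axiom (C2) applied to $F\subseteq A$ yields $\cl_U(F)\subseteq\cl_U(A)=A$. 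Therefore $e\in A$, so $F\cup e$ is an element of $\D'$ strictly containing $F$ and contained in $A$, contradicting $F=\infn_{\D'}(A)$.

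For the inclusion $\subseteq$, let $F\in\L'$ and take $A:=\cl_U(F)$, which lies in $\L$ by Faigle's theorem. By axiom (C1), $F\subseteq A$, and since $F\in\D'$ this already gives $F\subseteq\infn_{\D'}(A)$. For the reverse containment, suppose $F'\in\D'$ satisfies $F\subsetneq F'\subseteq A$. Accessibility of $\D'$ supplies a cover $F\lessdot F\cup e$ in $\D'$ with $F\cup e\subseteq F'$, so in particular $F\cup e\in\D'$ and $e\in A=\cl_U(F)$. Using $F\cup e\in\D$ in~\eqref{ranktoclosure} then forces $\rho(F\cup e)=\rho(F)$, which contradicts $F\in\L'$ via~\eqref{flats-from-rank-no-sup}.

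There is no serious obstacle: once one picks $\cl_U(F)$ as the witness in $\L$ for the $\subseteq$ direction, the rest is a careful bookkeeping exercise with Faigle's closure axioms, accessibility of $\D'$, and the fact that for elements of $\D$ the operator $\supn_\D$ is trivial. The only minor subtlety is remembering that closure in $U$ and closure in $U_{\D'}$ generally differ, so one must not conflate ``$F$ is a flat of $U_{\D'}$'' with ``$\cl_U(F)=F$.''
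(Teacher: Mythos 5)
Your proof is correct and follows essentially the same route as the paper's: both directions use the same witnesses (for $\subseteq$, the flat $\cl_U(F)=\cl(F)$), and your appeal to Faigle's axiom (C2) in the $\supseteq$ direction is just a packaged form of the submodularity argument the paper carries out by hand. The one step worth flagging --- that accessibility of $\D'$ lets you refine $F\subsetneq F'$ to a single-element step $F\cup e\in\D'$ with $F\cup e\subseteq F'$ --- is indeed justified, since in an accessible lattice every covering relation adds exactly one element.
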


\begin{proof}
($\supseteq$) Let $A\in\L$ and $A'=\infn_{\D'}(A)$.  Suppose that there exists some $e\in E\sm A'$ such that $A'\cup e\in\D'\subseteq\D$.  Then $A\cup e=A\cup(A'\cup e)$ belongs to $\D$ as well.  Moreover, $e\notin A$ (otherwise $A'\cup e$ is an element of $\D'$ larger than $A'$), so $\rho(A\cup e)>\rho(A)$ by~\eqref{flats-from-rank-no-sup}, so $\rho(A'\cup e)>\rho(A')$ by submodularity.  It follows by~\eqref{flats-from-rank-no-sup} that $A'\in\L'$.

($\subseteq$) Let $A'\in\L'$, and let $A=\cl(A')$; in particular $A\in\L$ and $\rho(A)=\rho(A')$.  Evidently $A'\subseteq\infn_{\D'}(A)$; we claim that equality holds.  Indeed, if $e\in\infn_{\D'}(A)$, then
$\supn_{\D'}(e)\subseteq\supn_{\D'}(\infn_{\D'}(A))=\infn_{\D'}(A)\subseteq A$, so $A'\subseteq\supn_{\D'}(A'\cup e) = \supn_{\D'}(A')\cup\supn_{\D'}(e)= A'\cup\supn_{\D'}(e)\subseteq A$.  It follows that $\rho(A')=\rho(\supn_{\D'}(A'\cup e))=\rho(A)$, but since $A'$ is a flat of $\D'$, it follows by~\eqref{flats-from-rank} that $e\in A'$, completing the proof of the claim.
\end{proof}

\begin{cor}\label{thm:closure-latticerestrict}
    Let $\cl_U$ be a U-matroid closure operator on $(E,\D)$, and let $\D'\subseteq\D$ be an accessible distributive sublattice. Then the lattice restriction $U'=U|_{\D'}$ has closure operator
    \[\clo[U']{A}=\infn_\D(\clo[U]{A}).\]
\end{cor}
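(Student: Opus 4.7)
The plan is to deduce the corollary directly from Theorem~\ref{thm:flat-latticerestrict} together with the observation that, for any U-matroid, $\clo{A}$ is the smallest flat containing $A$. First I would establish this characterization of closure: axiom~\ref{Faigle:D1} gives $A\subseteq\clo{A}$; axiom~\ref{Faigle:D2} forces $\cl$ to be idempotent, so $\clo{A}$ is itself a flat; and one more application of~\ref{Faigle:D2} shows that $\clo{A}$ is contained in every flat that contains $A$. (Alternatively, this follows directly from~\eqref{ranktoclosure} and the characterization~\eqref{flats-from-rank} of flats.)

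Second, I would record the reduction $\clo[U']{A}=\clo[U']{\supn_{\D'}(A)}$. This holds because $\clo[U']{A}\in\D'$ and $\clo[U']{A}\supseteq A$ force $\clo[U']{A}\supseteq\supn_{\D'}(A)$, after which idempotency of $\cl_{U'}$ yields equality. Since $A\in\D'$ satisfies $\supn_{\D'}(A)=A$, this reduces the proof to the case $A\in\D'$.

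With these two tools in hand, the argument is short. Set $F=\clo[U]{A}\in\L$. By Theorem~\ref{thm:flat-latticerestrict}, $\infn_{\D'}(F)\in\L'$, so $\infn_{\D'}(F)$ is a flat of $U'$; and since $A\in\D'$ with $A\subseteq F$, we automatically have $A\subseteq\infn_{\D'}(F)$. To check minimality, take any $F'\in\L'$ with $A\subseteq F'$. Theorem~\ref{thm:flat-latticerestrict} yields $F'=\infn_{\D'}(G)$ for some $G\in\L$, and the chain $A\subseteq F'\subseteq G$ together with minimality of $\clo[U]{A}$ gives $F=\clo[U]{A}\subseteq G$, whence $\infn_{\D'}(F)\subseteq\infn_{\D'}(G)=F'$. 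Thus $\infn_{\D'}(\clo[U]{A})$ is the smallest flat of $U'$ containing $A$, which by the first step equals $\clo[U']{A}$.

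The main bookkeeping obstacle lies in the very first step (verifying that the closure operator is recovered from its lattice of flats as ``smallest flat containing $A$'') and in the correct handling of inputs $A\not\in\D'$ via the $\supn_{\D'}$ reduction. Once these foundational points are in place, Theorem~\ref{thm:flat-latticerestrict} handles the rest mechanically, and no new submodular computation is required.
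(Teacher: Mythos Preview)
The paper states this result as an unproven corollary of Theorem~\ref{thm:flat-latticerestrict}, so there is no explicit proof to compare against; your approach of deducing it from that theorem via the ``smallest flat containing~$A$'' characterization of closure is exactly what is intended, and your argument for $A\in\D'$ is clean and correct.

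However, your reduction step has a genuine gap. You correctly observe that $\clo[U']{A}=\clo[U']{\supn_{\D'}(A)}$, but this only handles the left-hand side of the identity. To reduce fully to the case $A\in\D'$, you would also need $\infn_{\D'}(\clo[U]{A})=\infn_{\D'}(\clo[U]{\supn_{\D'}(A)})$, and this can fail. Concretely, take $E=\{1,2\}$, $\D=2^E$, $\rho(S)=|S|$ (so every subset is a flat of~$U$), and $\D'=\{\emptyset,\{1\},\{1,2\}\}$. For $A=\{2\}$ one gets $\clo[U]{A}=\{2\}$ and hence $\infn_{\D'}(\clo[U]{A})=\emptyset$, whereas $\clo[U']{A}=\{1,2\}$ by~\eqref{ranktoclosure}.

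This is not a flaw in your reasoning so much as a flaw in the statement itself: the formula (reading the paper's $\infn_\D$ as the evident typo for $\infn_{\D'}$) holds only for $A\in\D'$, not for arbitrary $A\subseteq E$. Your proof is complete and correct once one restricts to that case; the reduction paragraph should be replaced by the hypothesis $A\in\D'$ rather than used to extend the result beyond it.
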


We now look at the generous extension in these settings, again starting with the lattice of flats.

\begin{thm}\label{thm:flat-rank-genext}
Let $U=(E,\D,\rho)$ be a U-matroid with lattice of flats $\L$.  Let $a\in E$ such that $\{a\}\notin\D$ and $E\sm\{a\}\in\D$. Let $U'=(E,\D',\rho)$ be the generous atom extension of $U$ to $\D'=\D[a]$.  Then the lattice of flats of $U'$ is $\L'=\L\cup\K$, where
\begin{align*}
\K&=\{A\cup\{a\}\st A\in\L \text{ and } \rho(\supn_\D(A\cup a))\geq\rho(A)+2\}\\
&= \{A\cup\{a\}\st A\in\L \text{ and } a\not\in B \text{ for all } B\in\L \text{ covering } A\}.
\end{align*}
\end{thm}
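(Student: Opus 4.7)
My plan is to work directly with the characterization of flats from \eqref{flats-from-rank-alt}: $F\in\L'$ iff $F\in\D'$ and $\rho_a(B)>\rho_a(F)$ for every $B\in\D'$ strictly containing $F$. A short preliminary lemma I would establish is that if $a\notin S$ then $\supn_{\D'}(S)=\supn_\D(S)$, because any element of $\D'\setminus\D$ has the form $T'\cup\{a\}$ with $T'\in\D$, and removing $a$ would produce a strictly smaller element of $\D$ still containing $S$.

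For $\L\subseteq\L'$, the only delicate case is $e=a$ with $a\notin A$. Here $\supn_{\D'}(A\cup a)=A\cup a$, and the $\L$-flat property of $A$ gives $\rho(\supn_\D(A\cup a))>\rho(A)$, placing us in case \eqref{genX:3} of Definition~\ref{defn:generous-atom}, so $\rho_a(A\cup a)=\rho(A)+1>\rho(A)$. For $\K\subseteq\L'$, let $A\cup a\in\K$ so $\rho(\supn_\D(A\cup a))\geq\rho(A)+2$. By \eqref{genX:3}, $\rho_a(A\cup a)=\rho(A)+1$. For each $e\in E\setminus(A\cup a)$, a short calculation identifies $T:=\supn_{\D'}((A\cup a)\cup e)$ as either $S$ or $S\cup\{a\}$ depending on whether $a\in S:=\supn_\D(A\cup e)$. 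The target inequality $\rho_a(T)\geq\rho(A)+2$ follows by cases: if $\rho(S)\geq\rho(A)+2$, immediate; otherwise $\rho(S)=\rho(A)+1$ and then the inclusion $\supn_\D(S\cup a)\supseteq\supn_\D(A\cup a)$ forces $\rho(\supn_\D(S\cup a))\geq\rho(A)+2>\rho(S)$, placing us once again in \eqref{genX:3}.

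For $\L'\subseteq\L\cup\K$, let $F\in\L'$. If $F\in\D$, testing the flat condition against $B=\supn_\D(F\cup e)\in\D'$ (for $e\in E\setminus F$, $e\neq a$) immediately yields the $\L$-flat condition; and for $e=a$ with $a\notin F$, the condition $\rho_a(F\cup a)>\rho(F)$ rules out case \eqref{genX:2} and forces \eqref{genX:3}, giving $\rho(\supn_\D(F\cup a))>\rho(F)$. I expect the main obstacle to be the case $F\notin\D$: write $F=A\cup a$ with $A\in\D$, $a\notin A$, and note that we must extract from a single $\L'$-flat condition both that $A\in\L$ and the $\K$-rank inequality. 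The key move is to first test against $B:=\supn_\D(A\cup a)\in\D$, which strictly contains $F$ since $F\notin\D$; this forces case \eqref{genX:3} and simultaneously yields $\rho_a(F)=\rho(A)+1$ and $\rho(\supn_\D(A\cup a))\geq\rho(A)+2$. With $\rho_a(F)$ pinned down, testing the flat condition against $B'=\supn_\D(A\cup e)\cup\{a\}$ for $e\in E\setminus F$, together with the bound $\rho_a(B')\leq\rho(\supn_\D(A\cup e))+1$ coming from \eqref{genX:2}--\eqref{genX:3} and unit-increase, gives $\rho(\supn_\D(A\cup e))>\rho(A)$, hence $A\in\L$.

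Finally, equivalence of the two descriptions of $\K$ follows from semimodularity of $\L$ (\cite[Prop.~3]{Faigle}), whereby covers in $\L$ have rank difference exactly one, and any pair of flats $A\subsetneq B$ with $\rho(B)=\rho(A)+1$ must form a cover since no intermediate integer rank is available. If $B$ covers $A$ and $a\in B$, then $\supn_\D(A\cup a)\subseteq B$, forcing $\rho(\supn_\D(A\cup a))\leq\rho(A)+1$. Conversely, if $\rho(\supn_\D(A\cup a))=\rho(A)+1$, then $\cl(A\cup a)\in\L$ contains $a$, has rank $\rho(A)+1$, and strictly contains $A$, so it covers $A$, contradicting the second description of $\K$.
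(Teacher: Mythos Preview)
Your proof is correct and follows the same overall strategy as the paper: verify the two inclusions $\L\cup\K\subseteq\L'$ and $\L'\subseteq\L\cup\K$ by case analysis, using the flat characterizations \eqref{flats-from-rank-alt}--\eqref{flats-from-rank} together with the defining cases \eqref{genX:1}--\eqref{genX:3} of $\rho_a$.

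A few differences are worth noting. First, you work consistently with the $\supn$-based characterization \eqref{flats-from-rank}, whereas the paper mixes in \eqref{flats-from-rank-no-sup}; as a side effect, your argument for $\K\subseteq\L'$ never invokes the hypothesis $E\setminus\{a\}\in\D$, which the paper uses precisely to pass from $(A\cup a)\cup e\in\D'$ to $A\cup e\in\D$. Second, your $\K\subseteq\L'$ argument is direct (computing $T=\supn_{\D'}(A\cup a\cup e)=S\cup a$ and bounding $\rho_a(T)$ from below), while the paper argues by contradiction. Third, you supply the verification that the two descriptions of $\K$ agree, which the paper states without proof. One small point you glossed over: in Case~2 of your $\K\subseteq\L'$ argument, when $\rho(S)=\rho(A)+1$, you should note that $a\notin S$ (since $a\in S$ would force $S\supseteq\supn_\D(A\cup a)$ and hence $\rho(S)\geq\rho(A)+2$), so that \eqref{genX:3} genuinely applies to $T=S\cup a$; but this fills in immediately.
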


\begin{proof}
($\L'\supseteq\L\cup\K$) We first prove $\L'\supseteq\L$. Suppose $A\in\L$. If $a\in A$, then any $S\in\D'$ containing $A$ must be an element of $\D$, and since $A\in\L$ we must have $\rho(S)>\rho(A)$; it follows that $A$ is a flat of $U'$. If $a\not\in A$, then in order to show $A\in\L'$ it suffices to check that $\rho(\supn_{\D'}(A\cup a))>\rho(A)$.  Indeed, $\rho(\supn_{\D}(A\cup a))>\rho(A)$ by~\eqref{flats-from-rank-alt},
which by \eqref{genX:3} implies $\rho(A\cup a)=\rho(\supn_{\D'}(A\cup a))=\rho(A)+1$, as desired.  We conclude that $\L'\supseteq\L$.

We now show that $\L'\supseteq\K$. Take $A\in\L$ such that $\rho(\supn_\D(A\cup a))\geq\rho(A)+2$; in particular $a\not\in A$. If $A\cup a\notin\L'$ then, by definition, there is some $e\in\D\sm (A\cup a)$ such that $(A\cup a)\cup e\in\D'$ and $\rho((A\cup a)\cup e)=\rho(A\cup a)$.  Since $E\sm\{a\}\in\D$ and $A\cup a\cup e\in\D'$, the set $A\cup e=(E\sm\{a\})\cap(A\cup a\cup e)$ belongs to $\D'$, hence (since it does not contain $a$) to $\D$. Then $\rho(\supn_\D(A\cup e\cup a))\geq\rho(\supn_\D(A\cup a))\geq\rho(A)+2>\rho(A\cup e)$. If $A\cup e\cup a\in\D$ then this inequality forces $\rho(A\cup a\cup e)=\rho(A\cup e)+1$, while if $A\cup e\cup a\not\in\D$ then by~\eqref{genX:3} we have $\rho(A\cup a\cup e)=\rho(A\cup e)+1=\rho(A\cup a)+1$. In each case, $\rho(A\cup a\cup e)=\rho(A\cup e)+1$, which contradicts our choice of $e$. We conclude that $A\cup a\in\L'$, so $\L'\supseteq\K$ as desired.
\medskip

($\L'\subseteq\L\cup\K$) Let $B\in\L'$.
First, suppose $B\in\D$. Since $B$ is a flat of $U'$, $\rho(B\cup e)>\rho(B)$ for all $e\in E\sm B$ such that $B\cup e\in\D'$ --- in particular, for all $e\in E\sm B$ such that $B\cup e\in\D$.  Thus $B$ is a flat of $U$, i.e., $B\in\L$.

Second, suppose $B\not\in\D$, so that $B=A\cup a$ for some $A\in\D$.  We claim that $A\in\L$. Take $e\in E\sm A$ with $A\cup e\in\D$. In particular, $e\neq a$, so $e\in E\sm B$ and $B\cup e=B\cup(A\cup e)\in\D'$. Then $\rho(B\cup e)>\rho(B)$ (since $B\in\L'$), so $\rho(A\cup e)>\rho(A)$  by submodularity of $\D'$, proving the claim. Then $\rho(\supn_\D(A\cup a))>\rho(A)$ and $\rho(A\cup  a)=\rho(A)+1$ by~\eqref{genX:3}. Since $A\cup a=B$ is a flat of $U'$ and $\supn_\D(B)\supsetneq B$, we have $\rho(\supn_\D(B))\geq\rho(B)+1\geq\rho(A)+2$. It follows that $B=A\cup a\in\K$, as desired.
\end{proof}

\begin{remark} \label{rmk:magnanimous2}
Here we return briefly to the operation of magnanimous extension, described in Remark~\ref{rmk:magnanimous}.  Using the monotonicity property~\eqref{monotonicity}, one can show easily that the magnanimous extension of $S$ to a polymatroid $\hat S=(E,2^E,\hat\rho)$ dominates all other polymatroid extensions of~$S$, and that its base polytope is $\BBB(\hat S)=\BBB(S)\cap\Rr_{\geq0}^E$.  As a corollary, every submodular system is a lattice restriction of a polymatroid; equivalently, every submodular system polyhedron (i.e., every extended generalized permutahedron) is the Minkowski sum of a (bounded) generalized permutahedron with a cone.
\end{remark}

\section{Basis systems and the pseudo-independence complex: Generalization of Bj\"orner's and Gale's theorems} \label{sec:basis}

Let $U=(E,\D,\rho)$ be a U-matroid with basis system $\B=\B(U)$.  The \defterm{pseudo-independence complex} $\Delta(U)$ is the simplicial complex on $E$ generated by $\B$.  When $U$ is a matroid, $\Delta(U)$ corresponds to the vertex set of the independence polytope $\III(U)=\AAA(U)\cap[0,1]^n$ (see Theorem~\ref{generous-polytopes}), but for general U-matroids the connection between $\Delta(U)$ and~$\AAA(U)$ can break down.  For instance, if $U$ is the U-matroid of the stalactite (see Example~\ref{ex:stalactite}), then $\AAA(M)\cap[0,1]^n$ has 11 vertices but $\Delta(M)$ has only 9 faces.
On the other hand, pseudo-independence complexes of U-matroids share a crucial property with independence complexes of matroids: they are shellable.  This property is both significant in its own right and useful for characterizing U-matroid basis systems.

\begin{thm} \label{thm:shelling}
The pseudo-independence complex $\Delta(U)$ of every U-matroid $U$ is shellable.
\end{thm}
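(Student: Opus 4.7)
The plan is to carry out the geometric strategy hinted at earlier: slice $\ppp=\BBB(U)$ with a carefully chosen hyperplane to produce a bounded polytope $\qqq$, and then transport a Bruggesser--Mani line shelling of $\partial\qqq$ back to an ordering of the bases of $U$. Fix a maximal chain $\mathcal{C}\colon \0 = A_0 \coveredby \cdots \coveredby A_n = E$ in $\D$, equivalently a linear extension $\sigma\in\Le(\Irr(\D))$, and let $\ell=\ell_\sigma$ be the linear functional $\xx\mapsto\sum_i\sigma^{-1}(i)\,x_i$.  By Definition~\ref{def:bounded-direction}, $\ell$ is bounded from below on $\ppp$; moreover $\ell(\ee_j-\ee_i)=\sigma^{-1}(j)-\sigma^{-1}(i)>0$ for every generator $\ee_j-\ee_i$ of the recession cone (Theorem~\ref{thm:extreme-ray}), since $\sigma$ is a linear extension of $\P=\Irr(\D)$.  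Choose $c\in\Rr$ exceeding $\ell(v)$ for every vertex $v$ of $\ppp$ and set
\[
\qqq \;=\; \ppp \cap \{\xx\in\Rr^n : \ell(\xx)\leq c\}.
\]
Since $\rec(\ppp)\cap\{\ell\leq 0\}=\{\zer\}$, the polyhedron $\qqq$ is a bounded polytope whose vertex set decomposes as $V(\qqq)=V(\ppp)\sqcup V_0$, where $V_0$ is the vertex set of the new facet $F_0:=\qqq\cap\{\ell=c\}$.

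I would then invoke the Bruggesser--Mani theorem to obtain a line shelling $F_0,F_1,\dots,F_k$ of $\partial\qqq$ beginning with $F_0$, produced by walking along a generic line that exits $\qqq$ through $F_0$.  From this shelling I would extract an ordering $v_1,\dots,v_m$ of $V(\ppp)$: setting $\tau(v)$ equal to the least $i\geq 1$ with $v\in F_i$, list the $v_j$ in increasing order of $\tau$, refining ties by the shelling order on the facets containing each $v$.  With $B_j:=\supp(v_j)$, the claim is then that $B_1,B_2,\dots,B_m$ is a shelling of $\Delta(U)$.

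Verifying this last claim is the main obstacle.  For each $j$ one must show that $\langle B_j\rangle\cap\langle B_1,\dots,B_{j-1}\rangle$ is a pure subcomplex of codimension one in $\langle B_j\rangle$.  I expect this to follow by combining the shelling property of $\partial\qqq$ (which controls how each $F_i$ attaches to the union of earlier facets) with the edge structure of extended generalized permutahedra: every edge of $\ppp$ is parallel to some $\ee_a-\ee_b$, so adjacent $0,1$-vertices of $\ppp$ differ by a single element exchange in their supports, which is exactly the combinatorial move encoding a codimension-one face in $\Delta(U)$.  Care will be needed to handle non-simplicial facets of $\qqq$, vertices lying in several facets, and the interaction with the new facet $F_0$; the unit-increase axiom guarantees that every $B_j$ has the same cardinality $\rho(E)$, so $\Delta(U)$ is pure and the shelling condition is meaningful.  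The further assertion promised in the section introduction --- that any linear functional in the braid cone of a bounded direction of $\ppp$ gives rise to a shelling --- will then follow by letting $\mathcal{C}$ range over all maximal chains of $\D$.
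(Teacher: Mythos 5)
Your setup (cutting $\ppp=\BBB(U)$ with a hyperplane $\ell\leq c$ for $\ell$ in the open braid cone of a linear extension of $\Irr(\D)$, so that $\qqq$ is bounded and $V(\qqq)=V(\ppp)\sqcup V_0$) matches the paper's construction exactly, and your observation that edges of $\ppp$ correspond to single-element exchanges of supports is the right combinatorial ingredient. But the step you flag as ``the main obstacle'' is in fact the entire content of the theorem, and your proposed mechanism for it does not work as stated. A Bruggesser--Mani shelling of $\partial\qqq$ orders the \emph{facets} of $\qqq$, while the bases of $U$ are the supports of its \emph{vertices}; extracting a vertex order by ``least $i$ with $v\in F_i$'' gives no control over $\langle B_j\rangle\cap\langle B_1,\dots,B_{j-1}\rangle$. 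For that intersection to be pure of codimension one you need: whenever $B_i$ precedes $B_j$ and $|B_i\cap B_j|<r-1$, some earlier $B_k$ satisfies $B_i\cap B_j\subseteq B_k$ and $|B_k\cap B_j|=r-1$. Knowing that polytope edges are single exchanges only handles the case of vertices of $\ppp$ that are actually adjacent; it says nothing about pairs of vertices sharing a large common support that are far apart in the graph of $\qqq$, which is precisely where matroid-style shelling arguments require an exchange axiom that U-matroids are not assumed to satisfy.

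The paper closes this gap with two moves you are missing. First, it passes to the polar dual $\qqq^\vee$, so that the vertices of $\ppp$ become the facets of a pure polytopal complex $K\subset\bd\qqq^\vee$; a line shelling of $\qqq^\vee$ in the direction of decrease of $\ell$ sees exactly the facets of $K$ first (these are the vertices of $\qqq$ with $\ell<c$), so $K$ is an initial segment of a shelling and hence itself shellable. Second, it invokes the transfer theorem of Heaton and Samper: a bijection $\F(K)\to\F(\Delta(U))$ that induces an isomorphism of dual graphs and an embedding of intersection graphs carries shellings of $K$ to shellings of $\Delta(U)$. The dual-graph isomorphism is where the generalized-permutahedron edge condition is actually used. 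Without the dualization (so that bases index facets rather than vertices) and without some transfer lemma of this kind, your outline does not yet constitute a proof; if you want to avoid citing Heaton--Samper you would need to prove an equivalent statement by hand, which is a nontrivial lemma rather than a routine verification.
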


\begin{proof}
We will make use of the following result.  For a pure polyhedral cell complex~$K$, let~$\F(K)$ denote the set of facets of~$K$, let $G_K$ denote the dual graph on~$\F(K)$ (in which two facets are adjacent if they intersect in a face of codimension~1), and let $H_X$ denote the intersection graph on~$\F(K)$ (in which two facets are adjacent if they intersect in a nonempty face).  Heaton and Samper \cite[Thm.~3]{HeatonSamper} proved that, for a pure polytopal complex~$K$ and a pure simplicial complex~$\Delta$, if there exists a bijection $\phi:\F(K)\to\F(\Delta)$ that induces a bijection $G_K\to G_\Delta$ and an embedding $H_K\to H_\Delta$, then $\phi$ maps every shelling of~$K$ to a shelling of~$\Delta$.  Our plan is to apply this result with $\Delta=\Delta(U)$.  The problem is to construct a suitable polyhedral complex~$K$.

Let $\ppp\subset\mathbb{R}^n$ be a polyhedron with at least one vertex, and let $\ccc=\rec(\ppp)$. Let $\ell\in (-\ccc^*)^\circ$, i.e., $\ell\cdot\vv>0$ for all $\vv\in \ccc$.  For $a\in\Rr$, define a polyhedron by
\[\qqq = \qqq(a) = \{\xx\in\ppp\st \ell(\xx)\leq a\}.\]
We claim that $\qqq$ is in fact bounded, i.e., a polytope.  Otherwise, $\qqq$ contains a ray whose direction vector $\vv$ belongs to $\ccc$, but then $\ell\cdot\vv>0$, so $\ell$ increases without bound on the ray and hence on $\qqq$, contradicting the definition of $\qqq$.  Furthermore, observe that we may choose $a$ sufficiently large so that
\begin{enumerate}
\item every facet of $\ppp$ gives rise to a facet of $\qqq$;
\item $\qqq$ has one additional facet, defined by $\ell(\xx)=a$;
\item $V(\ppp)\subset V(\qqq)$, and every edge in $\qqq$ between two vertices of $\ppp$ is also an edge of $\ppp$; and 
\item if $\xx\in V(\qqq)$, then $\ell(\xx)<a$ if and only if $\xx\in V(\ppp)$.
\end{enumerate}
An example of this ``cutting-off'' construction is shown in Figure~\ref{fig:cutoff}.

\begin{figure}[htb]
\begin{center}
\begin{tikzpicture}[scale=0.75]
\newcommand{\scl}{0.7}
\newcommand{\xiooi}{0*\scl}	\newcommand{\yiooi}{6*\scl}
\newcommand{\xooii}{3*\scl}	\newcommand{\yooii}{4*\scl}
\newcommand{\xioio}{-1*\scl}	\newcommand{\yioio}{4*\scl}
\newcommand{\xoioi}{1*\scl}	\newcommand{\yoioi}{2*\scl}
\newcommand{\xiioo}{-3*\scl}	\newcommand{\yiioo}{2*\scl}
\newcommand{\xoiio}{0*\scl}	\newcommand{\yoiio}{0*\scl}
\coordinate (oiio) at (\xoiio,\yoiio);
\coordinate (ioio) at (\xioio,\yioio);
\coordinate (iioo) at (\xiioo,\yiioo);
\coordinate (iooi) at (\xiooi,\yiooi);
\fill[blue!10] (oiio)--(iioo)--(iooi)--(7+\xiooi/2,\yiooi)--(7+\xooii/2,\yooii)--(7+\xoiio/2,\yoiio)--cycle;
\draw[thick,red, fill=red!20] (7+\xiooi,1+\yiooi)--(7+\xooii,1+\yooii)--(7+\xoiio,-1+\yoiio)--(7+\xiioo,-1+\yiioo)--cycle;
\foreach \coo in {(oiio),(iooi),(iioo)} \draw[dashed] (ioio)--\coo;
\draw[thick] (oiio)--(iioo)--(iooi);
\draw[dashed] (ioio)--(11+\xioio,\yioio);
\draw[thick] (iooi)--(11+\xiooi,\yiooi);
\draw[thick] (oiio)--(11+\xoiio,\yoiio);
\draw[thick] (iioo)--(11+\xiioo,\yiioo);
\draw[fill=black] (\xoiio,\yoiio) circle(.1);
\draw[fill=black] (\xioio,\yioio) circle(.1);
\draw[fill=black] (\xiioo,\yiioo) circle(.1);
\draw[fill=black] (\xiooi,\yiooi) circle(.1);
\draw[very thick,green!50!black,dashed] (7+\xiooi/2,\yiooi)--(7+\xooii/2,\yooii)--(7+\xoiio/2,\yoiio);
\draw[very thick,green!50!black] (7+\xiooi/2,\yiooi)--(7+\xiioo/2,\yiioo)--(7+\xoiio/2,\yoiio);
\draw[very thick,fill=green] (7+\xiooi/2,\yiooi) circle(.1);
\draw[very thick,fill=green] (7+\xooii/2,\yooii) circle(.1);
\draw[very thick,fill=green] (7+\xiioo/2,\yiioo) circle(.1);
\draw[very thick,fill=green] (7+\xoiio/2,\yoiio) circle(.1);

\draw[very thick,->,red] (\xoioi,\yoioi+.75)--(\xoioi+2,\yoioi+.75);
\node[red] at (\xoioi+1,\yoioi+.4) {$\ell$};
\node[red] at (\xoioi+9,\yoioi+.75) {$\ell(\xx)=a$};
\node at (2,.5) {\large$\qqq$};
\node at (12,2) {\large$\ppp$};

\end{tikzpicture}
\caption{``Cutting off'' a polyhedron $\ppp$ with a hyperplane $\ell(x)=a$ to produce a polytope $\qqq$.\label{fig:cutoff}}
\end{center}
\end{figure}

We now apply this construction to the base polytope $\ppp=\BBB(U)$.  (Note that we do not claim that $\qqq$ is a generalized permutahedron.)
Consider the polar dual $\qqq^\vee$ of $\qqq$.  The facets of $\qqq^\vee$ corresponding to the vertices of $\ppp$ form a pure polytopal complex $K\subset\bd \qqq^\vee$ such that $G_K\isom G_{\Delta(U)}$.  Now consider a line shelling of $\qqq^\vee$ as described in \cite[\S8.2]{Ziegler}, moving in the direction of decrease of $\ell$ (perturbing $\ell$ if necessary so as to meet the facets of $\qqq^\vee$ at distinct points).  The facets that become visible in the first phase of the shelling are precisely those in $K$.  In particular $K$ is an initial segment of a shelling, so it is itself shellable.  By the result of Heaton and Samper stated earlier \cite[Thm.~3]{HeatonSamper}, ordering the vertices of $\ppp$ in decreasing order of the value of $\ell$ gives a shelling order on~$\Delta$.
\end{proof}

In particular, let $\sigma$ be any linear extension of $\P$, so that the braid cone $\ccc_\sigma$ lies in $\N(\P)$.  Then any linear functional $\ell\in -\ccc_\sigma^\circ$ defines a shelling of $\Delta(U)$.  By varying $\ell$ within the same braid cone, we can obtain different shelling orders, including the lexicographic and reverse-lexicographic orders on $\binom{E}{r}$ corresponding to~$\sigma$.
(By comparison, a theorem of Bj\"orner \cite[Thm.~7.3.4]{Bjorner-shellability} states that a pure complex is a matroid independence complex if and only if \textit{every} ordering on the facets induced lexicographically by a total ordering of the vertices is a shelling order.)

\begin{example} \label{shell-stalactite}
Consider the stalactite~$\sss$ of Example~\ref{ex:stalactite}, with ground set $E=[4]$ and $\B=\{12,13,14,23\}$.  Its characteristic poset contains the single relation $1<4$.  Its pseudo-independence complex is just the graph with edges $\B$, which is shellable but not a matroid complex.
Every linear extension $\sigma$ of $\P$ gives rise lexicographically to a shelling order of $\Delta$; for example, $\sigma=3<1<2<4$  gives rise to the shelling order $13,12,14,24$.  In fact, all twelve linear extensions give rise to different shelling orders.  (As it happens, there are only two linear orders on $E$ that do \textit{not} give rise to shelling orders, namely $4<3<2<1$ and $4<2<3<1$.)
\end{example}

We next characterize basis systems of U-matroids.  Specifically, given a partial order $\P$ on~$E$ and a nonempty set family $\B\subseteq\binom{E}{r}$, we will give necessary and sufficient conditions for $\B$ to be the basis system of a U-matroid with characteristic poset $\P$.
For $\sigma\in \Le(\P)$, let $f_\B(\sigma)$ denote the lexicographically smallest element of $\B$ with respect to $\sigma$
(see \S\ref{subsec:lattices}).

\begin{thm} \label{thm:basis-theorem}
Let $E=[n]$ and $0\leq r\leq n$.  Fix a poset $\P$ on $E$, and let $\B\subseteq\binom{E}{r}$.
Then the following are equivalent:
\begin{enumerate}[label=(\alph*)]
\item\label{Umatroid} There exists a U-matroid $(E,J(\P),\rho)$ with basis system $\B$.
\item\label{very-optimistic} The following conditions hold:
    \begin{enumerate}[label=(B\arabic*)]
        \item\label{VO:1} The function $f_\B\st\Le(\P)\to\B$ is surjective;
        \item\label{VO:2} For every $\sigma\in \Le(\P)$, the set $\B$ has a unique minimum element with respect to $\lgale^\sigma$.
    \end{enumerate}
\item\label{optimistic} The following conditions hold:
    \begin{enumerate}[label=(B\arabic*$'$)]
        \item\label{O:1} The function $f_\B\st\Le(\P)\to\B$ is surjective;
        \item\label{O:2} For every $\sigma\in \Le(\P)$, the order $\llex^\sigma$ is a shelling order of the simplicial complex with facets~$\B$.   
    \end{enumerate}
\end{enumerate}
\end{thm}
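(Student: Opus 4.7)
\medskip
\noindent\textbf{Proof proposal.}
The plan is to prove the cyclic chain of implications \ref{Umatroid}~$\Rightarrow$~\ref{optimistic}~$\Rightarrow$~\ref{very-optimistic}~$\Rightarrow$~\ref{Umatroid}. For \ref{Umatroid}~$\Rightarrow$~\ref{optimistic}, surjectivity of $f_\B$ is essentially bookkeeping: by Theorem~\ref{thm:submod-vertex} every vertex of $\BBB(U)$ has the form $\xx^\rho_\sigma$ for some $\sigma\in\Le(\Irr(\D))=\Le(\P)$, and inspecting~\eqref{basis-from-linext} shows that $B_\rho(\sigma)$ is exactly the $\sigma$-lexicographically smallest basis, i.e.\ $B_\rho(\sigma)=f_\B(\sigma)$. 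The shelling claim refines the proof of Theorem~\ref{thm:shelling}: in that argument any linear functional $\ell\in -\rec(\ppp)^\circ$ produces a shelling by decreasing $\ell$-value, so by perturbing $\ell$ into the interior of the braid cone indexed by $\sigma$ the induced order on vertices of $\ppp$ (and hence on the facets of $\Delta(U)$) becomes precisely $\llex^\sigma$.

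For \ref{optimistic}~$\Rightarrow$~\ref{very-optimistic}, fix $\sigma\in\Le(\P)$ and set $B^*=f_\B(\sigma)$. Any $B<_{\text{gale}}^\sigma B^*$ would satisfy $B<_{\text{lex}}^\sigma B^*$, contradicting the definition of $B^*$. The content is to rule out $B\in\B$ that is $\sigma$-Gale-incomparable with $B^*$. Write $B^*=\{a_1<\cdots<a_r\}$ and $B=\{b_1<\cdots<b_r\}$ in the $\sigma$-order, let $k$ be the smallest differing index (so $a_k<b_k$ by lex-minimality of $B^*$), and let $j>k$ be minimal with $b_j<a_j$. The shelling property supplies a unique minimal new face $R(B)\subseteq B$, and the standard lex-shelling analysis (as in Bj\"orner \cite[Thm.~7.3.4]{Bjorner-shellability}) identifies $R(B)$ with the set of elements of $B$ admitting an ``exchange to a $\llex^\sigma$-earlier facet.'' I would use this to swap $b_j$ out of $B$ in favor of $a_k$, producing a basis strictly $\sigma$-lex-smaller than $B^*$ and contradicting the choice of $B^*$.

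For \ref{very-optimistic}~$\Rightarrow$~\ref{Umatroid}, define $\rho\colon J(\P)\to\Nn$ by $\rho(A)=\max\{|A\cap B|\st B\in\B\}$, as forced by Proposition~\ref{prop:rank-from-vertices}. Integrality, calibration, monotonicity, and unit-increase are immediate, and \ref{VO:1} guarantees $\B\ne\0$ so the maximum is defined. The key technical step is submodularity. Given $A_1,A_2\in J(\P)$, I would choose $\sigma\in\Le(\P)$ for which $A_1\cap A_2$ and $A_1\cup A_2$ appear as consecutive initial segments of the chain induced by $\sigma$ (possible since $J(\P)$ is accessible), let $B^*$ be the unique $\sigma$-Gale-minimum basis guaranteed by \ref{VO:2}, and read off $|B^*\cap A|$ for the four sets $A\in\{A_1,A_2,A_1\cap A_2,A_1\cup A_2\}$; componentwise minimality of $B^*$ forces the desired inequality. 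Having proved $\rho$ is a U-matroid rank function, one verifies $\B(\rho)=\B$ by checking $B_\rho(\sigma)=f_\B(\sigma)$ (which holds because both are produced by the same $\sigma$-greedy procedure on $\B$) and invoking surjectivity of $f_\B$ once more.

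The main obstacle will be the Gale-incomparable case of \ref{optimistic}~$\Rightarrow$~\ref{very-optimistic}, where one must extract a concrete lex-reducing exchange from the abstract shelling hypothesis; a close second is the submodularity verification in \ref{very-optimistic}~$\Rightarrow$~\ref{Umatroid}, which requires delicately choosing a single linear extension $\sigma\in\Le(\P)$ that simultaneously controls $A_1\cap A_2$ and $A_1\cup A_2$ as initial segments, something that is automatic in the Boolean setting of matroids but genuinely uses the structure of $J(\P)$ here.
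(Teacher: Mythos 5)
Your implications \ref{Umatroid}$\implies$\ref{optimistic} and \ref{very-optimistic}$\implies$\ref{Umatroid} follow the paper's route and are essentially sound (for the latter, the paper proves the same Claim about initial segments and the same inequality; your variant, which only needs $A_1\cap A_2$ and $A_1\cup A_2$ as initial segments and bounds $|B^*\cap A_i|\leq\rho(A_i)$ from the definition of $\rho$, works equally well, and your identification $B_\rho(\sigma)=f_\B(\sigma)$ at the end is the right closing step). The problem is the step you yourself flag as the main obstacle: ruling out elements of $\B$ that are $\legale^\sigma$-incomparable with $B^*=f_\B(\sigma)$ in \ref{optimistic}$\implies$\ref{very-optimistic}. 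The exchange you propose does not deliver a contradiction. First, the shelling hypothesis only guarantees that for each $y$ in the restriction face $R(B)$ there is some $\llex^\sigma$-earlier facet of the form $B-y+x$; it gives you no control over which $x$ appears, so there is no reason $B-b_j+a_k$ should lie in $\B$. Second, even if $B-b_j+a_k\in\B$, it agrees with $B^*$ in its first $k$ entries (its $k$-th smallest element is $a_k$, since $a_k<b_k\leq b_j$), so it need not be $\llex^\sigma$-smaller than $B^*$, and no contradiction with the lex-minimality of $B^*$ results. As written, the implication is not proved.

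The fix is simpler than an explicit exchange and is what the paper does: list $\B$ in the order $B_1\llex^\sigma\cdots\llex^\sigma B_N$, which by \ref{O:2} is a shelling order. For each $j>1$ the shelling condition yields some $i<j$ with $|B_i\cap B_j|=r-1$, so $B_i=B_j-y+x$ with $x\neq y$; the relation $B_i\llex^\sigma B_j$ forces $x<_\sigma y$, and a single such swap of a larger element for a smaller one gives $B_i\legale^\sigma B_j$. Since the Gale order is transitive, induction on $j$ gives $B_1\legale^\sigma B_j$ for all $j$, so the lex-minimum $B_1=f_\B(\sigma)$ is the unique Gale-minimum. This bypasses the incomparability analysis entirely: you never compare an arbitrary $B$ directly with $B^*$, only with a neighboring facet earlier in the shelling.
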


\begin{proof}
\ref{Umatroid}$\implies$\ref{optimistic}: This implication follows from Theorem~\ref{thm:shelling} and the discussion thereafter.
\medskip

\ref{optimistic}$\implies$\ref{very-optimistic}: Assume that $\B$ satisfies conditions~\ref{O:1} and~\ref{O:2}.  Condition~\ref{VO:1} is identical to~\ref{O:1}.
To show~\ref{VO:2}, let $\sigma\in \Le(\P)$.  Order the bases lexicographically with respect to $\sigma$ as $B_1,\dots,B_k$; by~\ref{O:2} this is a shelling order.
By the definition of shelling, for every $j>1$, there exists $i\in[1,j-1]$ such that $|B_i\cap B_j|=r-1$.  Since $B_i\llex B_j$, it must be of the form $B_j\sm\{y\}\cup\{x\}$, where $x<y$.  In particular, $B_i\lgale B_j$ as well.  By induction, it follows that $B_1\lgale B_j$ for all $j>1$.
\medskip

\ref{very-optimistic}$\implies$\ref{Umatroid}:
Assume that $\B$ satisfies~\ref{VO:1} and~\ref{VO:2}.  Define a function $\rho: J(\P) \to \Nn$ by
\[\rho(A)=\max_{B\in \B}\{|B\cap A|\}.\]
We claim that $(E,J(\P),\rho)$ is a U-matroid.
It is immediate that $\rho$ satisfies the calibration and monotonicity conditions of Definition~\ref{def:submodsyst},as well as the integrality and unit-increase conditions of Definition~\ref{def:U-matroid}.  It remains only to check submodularity, for which we use the following fact.

\textbf{Claim:}
Let $A\in J(\P)$, let $\sigma \in \Le(\P)$ be a total order with $A$ as an initial segment (i.e., if $a\in A$ and $z\in E\sm A$, then $a<_\sigma z$), and let $B=f_\B(\sigma)$. Then $\rho(A)= |A\cap B|$.

To prove the claim, for any $B'\in\B$, the set $A\cap B'$ consists of the $k$ smallest elements of $B'$, for some $k$ (since $A$ is an order ideal).  But by~\ref{VO:2} $A\cap B$ must then contain the $k$ smallest elements of $B$, and it follows that $|A\cap B|\geq|A\cap B'|$.

Now, let $A, A'\in J(\P)$. Consider a linear extension $\sigma$ of $\P$ that contains $A\cap A'$, $A$ and $A\cup A'$ as initial segments; note that such a $\sigma$ exists because $A\cap A'\subseteq A\subseteq A\cup A'$.  
Let $B=f_\B(\sigma)$; then the claim implies that
\[\rho(A\cup A')+\rho(A\cap A')-\rho(A)
= |B\cap(A\cup A')|+|B\cap(A\cap A')|-|B\cap A| 
= |B\cap A'|
\leq \rho(A')\]
which is the submodular inequality.

Now we show that the basis system of $U=(E,\D,\rho)$ is in fact $\B$ itself.  By the recipe~\eqref{basis-from-linext} for bases in terms of linear extensions of $\P$, every basis $B$ satisfies $\rho(B)=|B|=r$, which implies $X\in\B$.
Conversely, suppose $B\in\B$. Then $B=f_\B(\sigma)$ for some $\sigma\in \Le(\P)$ by~\ref{VO:1}.  Let $A=B_\rho(\sigma)=\{a_1<\cdots<a_r\}$.
Suppose that $A'\in\binom{E}{r}$ and $A'\llex^\sigma A$, say $A'=\{a'_1<\cdots<a'_r\}$ with $a'_i=a_i$ for $i<k$ and $a'_k<_\sigma a_k$. Then $\rho(\{a'_1,\dots,a'_k\})<k$ by by Theorem~\ref{thm:submod-vertex}, and the unit-increase property then implies $\rho(A')<r$, i.e., $A'\notin\B$.  It follows that $A=f_\B(\sigma)=B$ as desired.
\end{proof}

In the special case that $\P$ is an antichain, the equivalence of \ref{Umatroid} and \ref{optimistic} reduces to Bj\"orner's theorem \cite[Thm.~7.3.4]{Bjorner-shellability} that a pure simplicial complex is matroidal if and only if every total order on the vertex set lexicographically induces a shelling order on the facets.

\section{Duality for U-matroids} \label{sec:dual}

If $\B$ is a matroid basis system on ground set $E$, its dual is the matroid basis system $\B^*=\{E\sm B\st B\in\B\}$.
The definition of duality for U-matroids is conveniently expressed geometrically.

\begin{defn} \label{defn:Udual}
Let $\ppp\subset\Rr^E$ be a U-matroid polyhedron.  The \defterm{U-matroid dual}\footnote{Not to be confused with the polar dual $\ppp^\vee$ of $\ppp$.} of~$\ppp$ is
\[\ppp^*=\{(1,1,\dots,1)-\xx \st \xx\in\ppp\}.\]
Equivalently, $\ppp^*$ is the image of~$\ppp$ under reflection through the point $(\frac12,\frac12,\dots,\frac12)$.
\end{defn}

Indeed, the vertices of $\ppp^*$ are (0,1)-vectors (they are the complements of the vertices of $\ppp$), and its 1-dimensional faces are parallel to the corresponding faces of $\ppp$, so it is a U-matroid polyhedron.  Moreover, $\rec(\ppp^*)=-\rec(\ppp)$, so if the corresponding U-matroids are $U=(E,\P,\rho)$ and $U^*=(E,\P^*,\rho^*)$, then $\P^*$ is indeed the poset dual of $\P$, and the characteristic lattice of $U$ is $\D^*=\{\bar A\st A\in\D\}$, where $\bar A=E\sm A$.
 Combining Definition~\ref{defn:Udual} with~\eqref{define-base-poly}, one can show (details omitted) that the dual rank function $\rho^*:\D^*\to\Nn$ is given by
$\rho^*(Z) = \rho(\bar Z)+|Z|-\rho(E)$, just as it is for matroids \cite[Prop.~2.1.9]{Oxley}.

This construction is comparable to the dual of a polymatroid \cite[pp.~29--30]{Fujishige} or general submodular systems \cite[pp.~36--37]{Fujishige}.  (However, duality is not canonical for polymatroids, since in general multiple vectors can play the role of $(1,1,\dots,1)$, and the dual of a submodular system is a \textit{super}modular system.)

While our definition of duality is geometric, it is worth observing that duality of basis systems can be understood purely combinatorially as well.

\begin{prop}
Suppose that $\B$ is the basis system of some U-matroid with characteristic poset $\P$.
Then $\B^*=\{\bar B\st B\in\B\}$ is the basis system of a U-matroid with characteristic poset $\P^*$.
\end{prop}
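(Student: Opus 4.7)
The plan is to verify the combinatorial conditions \ref{VO:1} and \ref{VO:2} of Theorem~\ref{thm:basis-theorem} for $\B^*$ with respect to $\P^*$, using the fact that $\B$ satisfies them with respect to $\P$.  The first observation is the bijection $\sigma\mapsto\sigma^*$ between $\Le(\P)$ and $\Le(\P^*)$: reversing a total order reverses every pairwise comparison, so $\sigma$ extends $\P$ if and only if $\sigma^*$ extends $\P^*$.

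The key technical lemma is that Gale order interacts with complementation and order reversal as follows: for any total order $\sigma$ on $E$ and any $A,B\in\binom{E}{r}$,
\[A \legale^\sigma B \iff \bar A \legale^{\sigma^*} \bar B.\]
To prove this, I would first recast $A \legale^\sigma B$ as the equivalent condition $|A\cap I|\geq|B\cap I|$ for every $\sigma$-initial segment $I$ of $E$.  Since $\sigma^*$-initial segments are exactly the complements of $\sigma$-initial segments, substituting $J=\bar I$ and using $|A|=|B|=r$ transforms the analogous inequality $|\bar A\cap J|\geq|\bar B\cap J|$ into $|A\cap I|\geq|B\cap I|$, yielding the equivalence.

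Given this lemma, condition \ref{VO:2} for $\B^*$ with respect to $\P^*$ is immediate: for any $\tau=\sigma^*\in\Le(\P^*)$, an element $\bar B_0\in\B^*$ is the $\legale^\tau$-minimum of $\B^*$ if and only if $B_0$ is the $\legale^\sigma$-minimum of $\B$, with existence and uniqueness inherited from \ref{VO:2} for $\B$.  For condition \ref{VO:1}, let $\bar B\in\B^*$.  By \ref{VO:1} for $\B$, there exists $\sigma\in\Le(\P)$ such that $B=f_\B(\sigma)$, i.e., $B$ is the $\sigma$-lex-minimum of $\B$.  Because lexicographic order refines Gale order, and \ref{VO:2} guarantees a unique Gale-minimum, the lex-minimum $f_\B(\sigma)$ must coincide with the Gale-minimum of $\B$.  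The key lemma then promotes this to: $\bar B$ is the $\sigma^*$-Gale-minimum of $\B^*$, and hence also its $\sigma^*$-lex-minimum, so $\bar B=f_{\B^*}(\sigma^*)$, establishing \ref{VO:1}.

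The only real obstacle is verifying the key lemma carefully; once the interaction of Gale order with complementation and reversal is pinned down, the remainder is a formal reduction to Theorem~\ref{thm:basis-theorem}.  As a sanity check, this combinatorial argument is compatible with the geometric duality discussed earlier: $\BBB(U^*)$ is the reflection of $\BBB(U)$ through $(\tfrac12,\dots,\tfrac12)$, which sends vertices to their complements and the recession cone to its negation, so the characteristic poset negates to $\P^*$ and the basis system to $\B^*$.
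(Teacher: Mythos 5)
Your proof is correct and follows essentially the same route as the paper: both verify conditions (B1) and (B2) of Theorem~\ref{thm:basis-theorem} for $(\P^*,\B^*)$ via the key fact $A\legale^\sigma B\iff\bar A\legale^{\sigma^*}\bar B$ and the observation that the unique Gale-minimum coincides with the lexicographic minimum. The paper asserts the key fact without proof, so your initial-segment verification of it is a welcome (and correct) addition, but the overall argument is the same.
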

\begin{proof}
By hypothesis, the pair $(\P,\B)$ satisfies conditions~\ref{VO:1} and~\ref{VO:2} of Theorem~\ref{thm:basis-theorem}.  We will show that the pair $(\P^*,\B^*)$ does too.

First, in general, $A\legale^\sigma A'\iff\bar A\legale^{\sigma^*} \bar{A'}$.  Therefore, for $\sigma^*\in\Le(\P^*)$ (i.e., for $\sigma\in\Le(\P)$), if $B$ is the unique $\legale^\sigma$-minimum of $\B$, then $\bar{B}$ is the unique $\legale^{\sigma^*}$-minimum of $\B^*$.  This establishes~\ref{VO:2}.

Now let $\bar B\in\B^*$.  By~\ref{VO:1}, there exists some $\sigma\in\Le(\P)$ such that $f_\B(\sigma)=B$.  Then $B$ is $\legale^\sigma$-minimal in~$\B$, and by~\ref{VO:2}, it is the unique such minimum.  Thus $\bar B$ is the unique $\legale^{\sigma^*}$-minimum of $\B^*$ and so $\bar B=f_{\B^*}(\sigma^*)$ (since lexicographic order is a linear extension of Gale order), implying~\ref{VO:1} for $(\P^*,\B^*)$.
\end{proof}

Basis systems also behave well with respect to U-matroid restriction:

\begin{prop}
Let $U=(E,\D,\rho)$ be a U-matroid with basis system $\B$.  Let $\D'\subseteq\D$ be a distributive lattice, and let $\P=\Irr(\D)$ and $\P'=\Irr(\D')$ (so in particular $\Le(\P)\supseteq\Le(\P')$). 
As before, let $f_\B(\sigma)$ denote the lexicographically smallest element of $\B$ with respect to $\sigma$.
Then the basis system $\B'$ of the lattice restriction $U'=U|_{\D'}$ is $f_\B(\Le(\P'))$.
\end{prop}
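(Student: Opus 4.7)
The plan is to chain two facts: the rank-theoretic description of bases via formula~\eqref{basis-from-linext}, and the identification of $B_\rho(\sigma)$ with the $\llex^\sigma$-minimum element $f_\B(\sigma)$ of the basis system $\B$ of $U$.

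First, by Theorem~\ref{thm:submod-vertex}, the bases of $U'$ are precisely $\{B_{\rho'}(\sigma) : \sigma \in \Le(\P')\}$. For any such $\sigma$, the initial segments $\{\sigma(1),\dots,\sigma(k)\}$ lie in $\D'$ by Definition~\ref{def:bounded-direction}\ref{bounded:init}, and hence also in $\D$. Because $\rho' = \rho|_{\D'}$, the recipe \eqref{basis-from-linext} applied with $\rho$ or with $\rho'$ records exactly the same rank jumps, so $B_{\rho'}(\sigma) = B_\rho(\sigma)$ for every $\sigma \in \Le(\P')$. Thus $\B' = \{B_\rho(\sigma) : \sigma \in \Le(\P')\}$, and it remains to match this set with $f_\B(\Le(\P'))$.

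Second, I would verify that $B_\rho(\sigma) = f_\B(\sigma)$ for every $\sigma \in \Le(\P)$, reprising the closing argument of the proof of Theorem~\ref{thm:basis-theorem}. Write $B_\rho(\sigma) = \{a_1 <_\sigma \cdots <_\sigma a_r\}$, and let $A' = \{a'_1 <_\sigma \cdots <_\sigma a'_r\}$ be any element of $\binom{E}{r}$ with $A' \llex^\sigma B_\rho(\sigma)$, having first disagreement at index $k$. Then Theorem~\ref{thm:submod-vertex} gives $\rho(\{a'_1,\dots,a'_k\}) < k$, and the unit-increase property forces $\rho(A') < r$, so $A' \notin \B$. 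Hence $B_\rho(\sigma)$ is the $\llex^\sigma$-minimum of $\B$, i.e., $B_\rho(\sigma) = f_\B(\sigma)$.

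Combining the two steps yields $\B' = \{B_\rho(\sigma) : \sigma \in \Le(\P')\} = \{f_\B(\sigma) : \sigma \in \Le(\P')\} = f_\B(\Le(\P'))$, which is the claim. The only point to guard against is that the lex-minimality argument in the second step, although located inside the proof of Theorem~\ref{thm:basis-theorem}, must apply to an arbitrary U-matroid and not only to the one constructed there; but since the argument uses only submodularity, unit-increase, and the definition~\eqref{basis-from-linext}, it transports verbatim, and no real obstacle remains.
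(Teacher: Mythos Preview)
Your overall strategy---reducing to the identity $B_\rho(\sigma)=f_\B(\sigma)$ and combining with the observation $B_{\rho'}(\sigma)=B_\rho(\sigma)$ for $\sigma\in\Le(\P')$---is sound and more direct than the paper's proof, which routes through the generous matroid extension. The paper first treats the case where $U$ is a matroid (so $\D=2^E$ and domain issues vanish), then writes both $U=\hat U|_\D$ and $U'=\hat U|_{\D'}$ for the generous extension $\hat U$, computes both basis systems via $f_{\hat\B}$, and finally observes that $f_{\hat\B}=f_\B$ on $\Le(\P)$. Your argument avoids invoking generous extensions altogether.

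However, your second step does \emph{not} transport verbatim from the proof of Theorem~\ref{thm:basis-theorem}, contrary to your closing remark. In that proof, $\rho$ was constructed by the formula $\rho(A)=\max_{B\in\B}|A\cap B|$, which makes sense for every subset of $E$. For a general U-matroid, $\rho$ is defined only on $\D$, and the sets $\{a'_1,\dots,a'_k\}$ and $A'$ in your argument need not lie in $\D$ (recall from Example~\ref{ex:not-comb-scheme} that even bases need not belong to $\D$). So the expressions ``$\rho(\{a'_1,\dots,a'_k\})<k$'' and ``$\rho(A')<r$'' are ill-formed, and the appeal to unit-increase collapses.

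The fix is short. For any $\sigma\in\Le(\P)$, any $B\in\B$, and any initial segment $A_i=\{\sigma(1),\dots,\sigma(i)\}\in\D$, Proposition~\ref{prop:rank-from-vertices} gives $|A_i\cap B|\le\rho(A_i)=|A_i\cap B_\rho(\sigma)|$. This is exactly the statement $B_\rho(\sigma)\legale^\sigma B$, and since lex order refines Gale order you get $B_\rho(\sigma)=f_\B(\sigma)$, after which your first step finishes the proof. If you prefer to keep the lex-style contradiction, replace $\{a'_1,\dots,a'_k\}$ by the initial segment $A_j$ with $j=\sigma^{-1}(a'_k)$: this set \emph{does} lie in $\D$, has $\rho(A_j)=k-1$, and contains $\{a'_1,\dots,a'_k\}\subseteq A'\cap A_j$, contradicting $|A_j\cap A'|\le\rho(A_j)$ from Proposition~\ref{prop:rank-from-vertices}.
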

\begin{proof}
If $U$ is a matroid, then $\Le(\P)=\Ord(E)$, and the conclusion follows from Theorem~\ref{thm:submod-vertex}.

For the general case, let $\hat U$ be the generous matroid extension of $U$ and let $\hat\B=\B(\hat U)$.  Then $U=\hat U|_{\D}$, so by the first case $\B=f_{\hat\B}(\Le(\P))$. But $U'=U|_{\D'}=(\hat U|_{\D})|_{\D'}=\hat U|_{\D'}$, so again by the first case $\B'=f_{\hat\B}(\Le(\P'))$.  Now $\Le(\P')\subseteq \Le(\P)$, and $f_{\hat\B}$ and $f_{\B}$ are equal on $\Le(\P)$, so
$\B'=f_{\B}(\Le(\P'))$ as desired.
\end{proof}

\begin{cor}
The U-matroid basis systems on ground set $E$ are exactly the sets of the form $\B\cap\D$, where $\B$ is a matroid basis system on $E$ and $\D$ is an accessible distributive lattice on $E$.
\end{cor}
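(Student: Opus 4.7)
The plan is to deduce the corollary directly from Corollary~\ref{cor:any-is-restrict} together with the preceding proposition; very little new content is needed beyond translating rank-function statements into basis-system statements.

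For the containment $\supseteq$, I would start with a matroid $M=(E,2^E,\hat\rho)$ whose basis system is $\B$ and with an accessible distributive lattice $\D\subseteq 2^E$, and form the lattice restriction $M|_\D=(E,\D,\hat\rho|_\D)$.  This is a U-matroid by Corollary~\ref{cor:any-is-restrict}, and the preceding proposition identifies its basis system as $f_\B(\Le(\P))$, where $\P=\Irr(\D)$.  Interpreting $\B\cap\D$ as this combinatorial image gives the first containment.

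For the containment $\subseteq$, I would take a U-matroid $U=(E,\D_0,\rho)$ and pass to its generous matroid extension $\hat U=(E,2^E,\hat\rho)$ guaranteed by Theorem~\ref{generous-dominates-full}, which by construction satisfies $\rho=\hat\rho|_{\D_0}$.  The preceding proposition applied to $\hat U$ restricted back to $\D_0$ then yields $\B(U)=f_{\B(\hat U)}(\Le(\P_0))$, exhibiting $\B(U)$ in the required form $\B(\hat U)\cap\D_0$.

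The delicate point, and the step I expect to require the most care, is reconciling the literal set-theoretic intersection $\B\cap\D$ with the combinatorial image $f_\B(\Le(\P))$ produced by the proposition: these need not coincide as subsets of $2^E$, because a U-matroid basis can fail to lie in the characteristic lattice (cf.\ the basis $134$ in Example~\ref{ex:not-comb-scheme}, which does not belong to the given $\D$).  If one insists on reading $\B\cap\D$ as a literal intersection, then in the forward direction $\D_0$ must be enlarged to the accessible distributive sublattice of $2^E$ generated by $\D_0\cup\B(U)$; the $\supseteq$ inclusion of the resulting intersection with $\B(\hat U)$ is immediate from the construction, but the $\subseteq$ inclusion, which says no extraneous matroid bases slip in, has to be verified by tracking the atom-by-atom construction of the generous extension and showing that each new basis of $\hat U$ outside $\B(U)$ requires some element of $2^E\setminus\D_0$ not already produced by joins and meets involving $\B(U)$.
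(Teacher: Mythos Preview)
Your instinct to worry about the literal reading of $\B\cap\D$ is exactly right, and in fact the difficulty you flag is fatal: the corollary as literally stated is \emph{false}.  Here is a concrete counterexample to the direction ``every $\B\cap\D$ is a U-matroid basis system''.  Take $E=[4]$, let $\B=\{12,14,23,34\}$ (the basis system of the rank-$2$ matroid with parallel classes $\{1,3\}$ and $\{2,4\}$), and let $\D=\{\0,1,3,12,13,34,123,134,1234\}$, which is an accessible distributive sublattice of $2^{[4]}$.  Then $\B\cap\D=\{12,34\}$.  But $\{12,34\}$ cannot be the basis system of any U-matroid: a U-matroid polyhedron with exactly the two vertices $(1,1,0,0)$ and $(0,0,1,1)$ would have to contain the segment between them as an edge, and that edge has direction $(1,1,-1,-1)$, which is not parallel to any $e_i-e_j$.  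Equivalently, condition~\ref{VO:2} of Theorem~\ref{thm:basis-theorem} fails for any poset admitting a linear extension in which $\{1,2\}$ and $\{3,4\}$ interleave.

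The paper supplies no proof of this corollary; it is asserted as an immediate consequence of the preceding proposition.  Given that proposition, the statement that actually follows---and is almost certainly what was intended---is that U-matroid basis systems are exactly the sets of the form $f_\B(\Le(\Irr(\D)))$ for $\B$ a matroid basis system and $\D$ an accessible distributive lattice.  That version is proved precisely by the two-paragraph argument you sketched: restrict a matroid to $\D$ for one direction, and pass to the generous matroid extension for the other.  Your proposed workaround of enlarging $\D_0$ to absorb the bases addresses only the forward direction and cannot rescue the reverse direction, since the counterexample above shows that the class $\{\B\cap\D\}$ is strictly larger than the class of U-matroid basis systems.
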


\begin{qn}
Is there a notion of basis exchange for U-matroids that generalizes matroid basis exchange?
\end{qn}

\section{U-matroids, poset matroids, and subspace arrangements} \label{sec:PMatroid}

In this section, we discuss how U-matroids, and specifically the operation of generous extension, can be used to study subspace arrangements.  In so doing, we describe the connections between U-matroids and two other combinatorial constructions in the literature: the \textit{poset matroids} of Barnabei, Nicoletti, and Pezzoli~\cite{Barnabei} and the \textit{multisymmetric matroids} of Crowley, Huh, Larson, Simpson and Wang~\cite{Crowley}.  As we observe, poset matroids are in fact precisely those U-matroids for which every basis belongs to the characteristic lattice.  

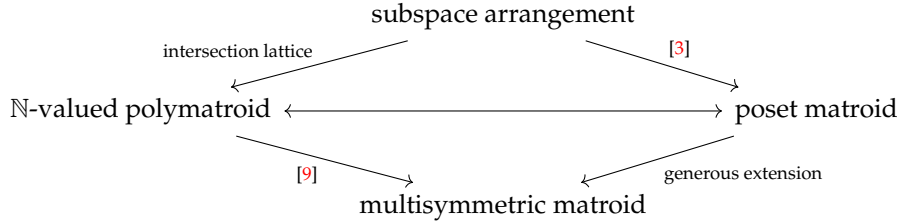
\begin{figure}[ht]
\[\begin{tikzcd}
& \text{subspace arrangement} \arrow[dl, "\text{intersection lattice}"'] \arrow[dr, "\text{\cite{Barnabei}}"]\\
\text{$\Nn$-valued polymatroid}  \arrow[dr, "\text{\cite{Crowley}}"'] \arrow[rr, leftrightarrow] && \text{poset matroid}  \arrow[dl, "\text{generous extension}"]\\
& \text{multisymmetric matroid}
\end{tikzcd}\]
\caption{Combinatorial models of subspace arrangements.\label{fig:mat-arr}}
\end{figure}

A subspace arrangement is a list $\XX=(X_1,\dots,X_m)$ of linear subspaces of a vector space $V\isom\fld^d$; typically the codimensions of the $X_i$ are specified in advance.  A basic combinatorial invariant of an arrangement $\XX$ is its rank function $\rk_\XX$, which gives the codimension of every intersection of spaces in~$\XX$. It is easy to check that $\rk_\XX$ is a polymatroid rank function.
Barnabei et al.\ constructed a poset matroid from $\XX$ whose characteristic lattice is a product of chains, and Crowley et al. constructed a canonical multisymmetric matroid corresponding to an arrangement obtained by replacing each space in $\XX$ by a generic intersection of hyperplanes.  We show that all three of these combinatorial invariants in fact encode the same information (which, by the way, is known to determine the cohomology groups of the complement of a real arrangement, but not the cohomology ring --- see \cite{Bjorner-subspace}).  On the other hand, our construction of the generous atom extension has concrete geometric meaning: it corresponds to replacing one subspace $X$ in $\XX$ with a pair $(X',H)$ of subspaces containing $X$, with $\codim X'=\codim X-1$ and $\codim H=1$.
When iterated, this operation yields the multisymmetric matroid construction of Crowley et al., so that construction is seen to be equivalent to the generous matroid extension.  As a consequence, we observe that Crowley et al.'s formula for the rank function of the minimal multisymmetric lift affords a solution of Problem~\ref{prob:rank-generous} in the special case of U-matroids arising from subspace arrangements.

\subsection{U-matroids and poset matroids} \label{sec:posetmatroid}

\begin{defn}\cite[\S11]{Barnabei}
A \defterm{poset matroid} is a triple $U=(E,\D,\rho)$, where $E$ is a finite set, $\D\subseteq 2^E$ is an accessible lattice, and $\rho$ is a function $\D\to\Nn$ that satisfies calibration~\eqref{calibration}, monotonicity~\eqref{monotonicity}, unit-increase~\eqref{unit-increase}, and the following \defterm{local chain property}.  Let $A\in\D$ and $b_1,b_2\in E$, and write $A_1=A\cup b_1$, $A_2=A\cup b_2$, $A_{12}=A\cup b_1\cup b_2$.  Suppose that $A_{12}\in\D$ and $\rho(A_{12})>\rho(A)$.  The local chain property is then that for at least one $j\in\{1,2\}$, we have $A_j\in\D$ and $\rho(A_j)=\rho(A)+1$.
\end{defn}

Poset matroid rank functions are submodular \cite[Prop.~11.7]{Barnabei}, so every poset matroid is a U-matroid.
Barnabei et al. defined the basis system of a poset matroid $U=(E,\D,\rho)$ by
\begin{equation} \label{P-basis-from-rank}
\dot\B=\dot\B(U)=\{\text{maximal elements $B\in\D$ such that $|B|=\rho(B)$}\}.
\end{equation}
The elements of $\dot\B$ were regarded in \cite{Barnabei} as the bases of $U$, and as we will see, the two notions coincide for poset matroids (although the definition of $\dot\B$ is not well-behaved in general for U-matroids).
All elements of $\dot\B(U)$ have equal cardinality~\cite[Thm.~4.1]{Barnabei}, so in particular $\dot\B$ is an antichain in $\D$.

\begin{thm} \label{thm:poset matroids-as-U-matroids}
A U-matroid $U=(E,\D,\rho)$ is a poset matroid if and only if every basis of $U$ is an element of $\D$.
\end{thm}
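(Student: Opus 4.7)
The plan is to prove both directions by analyzing the local chain property (LCP) in light of the linear-extension recipe~\eqref{basis-from-linext} for bases. I would begin with the backward direction, which is shorter. Assume every basis of $U$ lies in $\D$, and consider $A, b_1, b_2 \in E$ with $A, A_{12} := A \cup \{b_1,b_2\} \in \D$ and $\rho(A_{12}) > \rho(A)$; split into cases on whether $A \cup b_1$ and $A \cup b_2$ lie in $\D$. If both do, LCP follows from submodularity (otherwise $2\rho(A) \ge \rho(A_{12}) + \rho(A)$ would contradict $\rho(A_{12}) > \rho(A)$). The case that neither is in $\D$ forces $A_{12} \notin \D$, so cannot occur. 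The delicate case is $A \cup b_1 \in \D$, $A \cup b_2 \notin \D$, which forces $b_1 <_\P b_2$; if in addition $\rho(A \cup b_1) = \rho(A)$, so LCP fails, I would construct $\sigma \in \Le(\P)$ whose initial chain passes through $A \lessdot A \cup b_1 \lessdot A_{12}$, and use~\eqref{basis-from-linext} to read off that $B_\rho(\sigma)$ omits $b_1$ but contains $b_2$. Such a $B$ is not a down-set of $\P$, contradicting the hypothesis.

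For the forward direction, assume $U$ satisfies LCP and suppose for contradiction that $B = B_\rho(\sigma) \notin \D$ for some $\sigma \in \Le(\P)$. Then there is a ``bad pair'' $p <_\P q$ with $p \notin B$ and $q \in B$. As a first reduction, I would argue that any $y$ strictly between $p$ and $q$ in $\P$ yields a bad pair with a strictly smaller $\P$-interval---either $(p, y)$ if $y \in B$, or $(y, q)$ if $y \notin B$---so we may assume $p \lessdot_\P q$ is a covering relation. Writing $p = \sigma(s)$, $q = \sigma(t)$, and $A_i = \{\sigma(1),\dots,\sigma(i)\}$, the key idea is to apply LCP at the triple $(A, p, q)$, where
\[
A = A_{s-1} \cup \{y \in E : y <_\P q \text{ and } y \neq p\}.
\]
A routine down-set check, crucially using that $p \lessdot_\P q$ eliminates all intermediate predecessors of $q$ in $\P$, shows $A, A \cup p, A_{12} = A \cup \{p,q\} \in \D$ while $A \cup q \notin \D$ (since $p$ is a missing predecessor of $q$).

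The main obstacle is verifying the rank hypothesis of LCP, namely $\rho(A_{12}) > \rho(A)$. I would obtain $\rho(A \cup p) = \rho(A)$ by submodularity applied to $A$ and $A_s = A_{s-1} \cup p$, using $\rho(A_s) = \rho(A_{s-1})$ (since $p \notin B$) together with $A \cap A_s = A_{s-1}$ and $A \cup A_s = A \cup p$. I would then obtain $\rho(A_{12}) = \rho(A) + 1$ by submodularity applied to $A_{12}$ and $A_{t-1}$, whose union is $A_t$ and whose intersection is $A \cup p$, combined with $\rho(A_t) = \rho(A_{t-1}) + 1$ (since $q \in B$) and unit-increase along $A \lessdot A \cup p \lessdot A_{12}$. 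Now LCP applied to $(A, p, q)$ demands some $A \cup b_j \in \D$ with $\rho(A \cup b_j) = \rho(A) + 1$; but $A \cup p$ has rank only $\rho(A)$, while $A \cup q \notin \D$. This contradiction completes the forward direction.
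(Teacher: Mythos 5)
Your proposal is correct in both directions, and I checked the details: the down-set verifications for $A$, $A\cup p$, $A\cup\{p,q\}$ do go through (the covering hypothesis $p\lessdot_\P q$ is exactly what makes $\{y: y<_\P q\}\sm\{p\}$ a down-set), and the two submodularity computations yield $\rho(A\cup p)=\rho(A)$ and $\rho(A\cup\{p,q\})=\rho(A)+1$ as claimed, so LCP genuinely fails at $(A,p,q)$. Your backward direction is essentially the paper's: both reduce to the asymmetric case $A\cup b_1\in\D$, $A\cup b_2\notin\D$, $\rho(A\cup b_1)=\rho(A)$, run a chain through $A\lessdot A\cup b_1\lessdot A_{12}$, and observe that the resulting basis contains $b_2$ but not $b_1$, contradicting $\B(U)\subseteq\D$ (the paper phrases the contradiction as $(A\cup B)\cap A_{12}=A_2\in\D$; you phrase it as $B$ failing to be a down-set --- same content). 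Your forward direction, however, is genuinely different. The paper argues directly: given any maximal chain producing the basis $B$, whenever a non-jump step immediately precedes a jump step, LCP lets one commute them (replace $A_{i+1}$ by $A_i\cup\{a_{i+2}\}$, which LCP guarantees lies in $\D$ with the right rank); iterating pushes all rank jumps to the front, so $B$ itself appears as the $r$-th initial segment of a chain in $\D$. That argument is shorter and more algorithmic, and it shows slightly more (every basis of a poset matroid is realized by a ``greedy'' chain, which the paper reuses in the subsequent corollary that $\dot\B(U)=\B(U)$). Your contrapositive argument instead isolates a minimal violation $p\lessdot_\P q$ and manufactures an explicit triple at which LCP fails; it costs more bookkeeping but has the virtue of pinpointing exactly where the local chain property breaks when a basis escapes $\D$. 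Both are valid; no gaps.
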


\begin{proof}
($\Longrightarrow$) Suppose that $\rho$ is a poset matroid rank function.  Let $B\in\B(U)$; we must show $B\in\D$.  Let $\0=A_0\coveredby\cdots\coveredby A_n=E$ be a maximal chain in $\D$ such that $B=B_\rho(A)$.  Write $A_i=\{a_1,\dots,a_i\}$.  Suppose that there are consecutive indices $i,i+1,i+2$ such that $\rho(A_i)=\rho(A_{i+1})=\rho(A_{i+2})+1$.  Then the local chain property implies that $A'_{i+1}=A_i\cup\{a_{i+2}\}$ belongs to $\D$ and has rank $\rho(A_i)+1$. Replacing $A_{i+1}$ with $A'_{i+1}$ produces a chain $A'$ such that $B_\rho(A')=B$.  Now replace $A$ with $A'$ and iterate this process until it stops.  The result is a chain in which $\rho(A_i)=\min(i,\rho(E))$.  In particular, $B=A_r\in\D$.
\medskip

($\Longleftarrow$) Suppose that $\B(U)\subseteq\D$.  We must show that $\rho$ satisfies the local chain property.  Let $A\in\D$ and $b_1,b_2\in E$; write $A_1=A\cup b_1$, $A_2=A\cup b_2$, $A_{12}=A\cup b_1\cup b_2$.  Suppose that $A_{12}\in\D$ and $\rho(A_{12})>\rho(A)$; by unit-increase $\rho(A_{12})\in\{\rho(A)+1,\rho(A)+2\}$.  If $\rho(A_{12})=\rho(A)+2$ then the local chain property follows from accessibility and unit-increase, and if $A_1$ and $A_2$ both belong to $\D$ then it follows from submodularity of $\rho$.  Thus, if the local chain property fails, then the only remaining possibility is (w.l.o.g.)\ that
$A_1\in\D$, $A_2\notin\D$, and $\rho(A_1)=\rho(A)$.
Now extend $A\coveredby A_1\coveredby A_{12}$ to a maximal chain in $\D$, which gives rise to a basis $B$.  Evidently $b_1\notin B$ and $b_2\in B$.  But by hypothesis $B\in\D$, so $(A\cup B)\cap A_{12}=A_2\in\D$, a contradiction.
\end{proof}

\begin{cor}
Let $U=(E,D,\rho)$ be a poset matroid.  Then $\dot\B(U)=\B(U)$.
\end{cor}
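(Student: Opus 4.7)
The plan is to prove both containments separately, using Theorem~\ref{thm:poset matroids-as-U-matroids} as the main input. I expect the forward containment $\B(U)\subseteq\dot\B(U)$ to be essentially automatic once that theorem is in hand, while the reverse $\dot\B(U)\subseteq\B(U)$ requires slightly more care in assembling a chain through $B$.

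For $\B(U)\subseteq\dot\B(U)$: fix a basis $B\in\B(U)$. By Theorem~\ref{thm:poset matroids-as-U-matroids}, $B\in\D$. Since $B$ is the support of a $0/1$ vertex of $\BBB(U)$ lying on the hyperplane $\xx(E)=\rho(E)$, we have $|B|=\rho(E)$. Proposition~\ref{prop:rank-from-vertices} gives $\rho(B)\geq|B\cap B|=|B|$, and the reverse inequality follows from unit-increase along a chain from $\0$ to $B$, so $|B|=\rho(B)$. For maximality: if $C\in\D$ with $B\subsetneq C$ and $|C|=\rho(C)$, then $\rho(C)=|C|>|B|=\rho(E)$, contradicting monotonicity.

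For $\dot\B(U)\subseteq\B(U)$: fix $B\in\dot\B(U)$. The first task is to show $|B|=\rho(E)$. By \cite[Thm.~4.1]{Barnabei} the elements of $\dot\B(U)$ have a common cardinality $r$; since the definition of U-matroid polyhedron guarantees $\BBB(U)$ has at least one vertex, $\B(U)$ is nonempty, and by the containment just established its elements (of cardinality $\rho(E)$) lie in $\dot\B(U)$, forcing $r=\rho(E)$. Hence $|B|=\rho(B)=\rho(E)$. Using accessibility, build a maximal chain $\0=A_0\coveredby\cdots\coveredby A_{|B|}=B$ in $\D$; since $\rho(A_{|B|})-\rho(A_0)=|B|$ and each step raises rank by at most one, we must have $\rho(A_i)=i$ for every $i$. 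Extend this to a maximal chain $A_0\coveredby\cdots\coveredby A_n=E$ in $\D$; monotonicity then pins $\rho(A_i)=\rho(E)$ for all $i\geq|B|$. Formula~\eqref{basis-from-linext} reads off $B$ itself as the basis associated to this chain, so $B\in\B(U)$.

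The main obstacle is the step ``$|B|=\rho(E)$'' for $B\in\dot\B(U)$. A direct argument via the local chain property looks delicate, since modifying $B$ by one element at a time need not preserve either membership in $\D$ or the equality $|B|=\rho(B)$. The shortcut proposed above---borrowing equicardinality from~\cite{Barnabei} and then comparing against the geometrically defined bases already known to lie in $\dot\B(U)$---sidesteps this difficulty cleanly.
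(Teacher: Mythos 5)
Your proof is correct and follows essentially the same route as the paper: the forward containment via the chain construction underlying Theorem~\ref{thm:poset matroids-as-U-matroids}, and the reverse containment by running a maximal chain through $B$ and reading off $B$ as the associated basis. The only difference is that you make explicit (via the equicardinality result of~\cite{Barnabei} together with $\B(U)\neq\0$) why $|B|=\rho(E)$ for $B\in\dot\B(U)$, a point the paper's terser proof leaves implicit.
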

\begin{proof}
The construction in the first part of the proof of Theorem~\ref{thm:poset matroids-as-U-matroids} shows that $\B(U)\subseteq\dot\B(U)$.  On the other hand, if $B\in\dot\B(U)$, then for any maximal chain in $\D$ containing $B$, the corresponding U-matroid basis is just $B$ (and at least one such chain exists).  Therefore, equality holds.
\end{proof}

The class of U-matroids strictly contains the class of poset matroids.  For example, the U-matroid of Example~\ref{ex:not-comb-scheme} is not a poset matroid, because the basis 134 does not belong to the characteristic lattice.  Moreover, the data $(A,b_1,b_2)=(14,2,3)$ violates the local chain property.

\subsection{Subspace arrangements} \label{sec:arr}

Let $\fld$ be a field and $V$ a vector space over~$\fld$ of dimension~$d$.  Write $\Gr^c(n)$ for the Grassmannian of codimension-$c$ vector subspaces of $\fld^n$.
For nonnegative integers $\cc=(c_1,\dots,c_m)$, a \defterm{$\cc$-arrangement} is a list $\XX=(X_1,\dots,X_m)$ of vector subspaces of~$V$ with $\codim X_i=c_i$ for all~$i$.  We define
\begin{align*}
\Gr(V,\cc)
&= \{\text{$\cc$-arrangements $\XX$ in $V$}\}\\
&\isom \Gr^{c_1}(d)\x\cdots\x\Gr^{c_m}(d).
\end{align*}
Let $\cc=(c_1,\dots,c_m)$, $\bb=(b_1,\dots,b_m)$ such that $c_i\geq b_i$ for all $i$ (for short, $\cc\geq\bb$).  For $\XX\in\Gr(V,\cc)$ and $\YY\in\Gr(V,\bb)$, we write $\YY\supseteq\XX$ to mean that $Y_i\supseteq X_i$ for every $i$.  We define
\begin{align*}
\Gr_\XX(V,\bb)
&= \{\YY\in\Gr(V,\bb)\st \YY\supseteq\XX\}\\
&\isom \{(Y_1/X_1,\dots,Y_m/X_m)\st Y_i\in\Gr^{b_i}(V),\ Y_i\supset X_i\}\\
&= \{(Y_1/X_1,\dots,Y_m/X_m)\st Y_i/X_i\in\Gr^{b_i}(V/X_i)\}\\
&\isom \Gr^{b_1}(d-c_1)\x\cdots\x\Gr^{b_m}(d-c_m).
\end{align*}
Also, we write $\codim\cap\XX=\codim(X_1\cap\cdots\cap X_m)$.

Let $\XX\in\Gr(V,\cc)$, and let $\D_\cc$ be the distributive lattice $[0,c_1]\x\cdots\x[0,c_m]$.
Barnabei et al.\ \cite[Thm.~12.1]{Barnabei} proved that $([n],\D_\cc,\rho_\XX)$ is a poset matroid, where $n=\sum c_i$ and $\rho_\XX:\D_\cc\to\Nn$ is defined by
\begin{equation} \label{rank-from-subspace-arrangement}
\rho_\XX(b_1,\dots,b_m)
= \max\{\codim\cap\YY\st \YY\in\Gr_\XX(V,\bb)\}. 
\end{equation}
Formula~\eqref{rank-from-subspace-arrangement} is the one used in~\cite[Thm.~12.1]{Barnabei}.

Henceforth, we assume that $\fld$ is infinite, so that we can work with generic sets in the Zariski topology on $\Gr(V,\cc)$.
Since $\Gr_\XX(V,\bb)$ is an irreducible variety in which each condition $\codim\cap\YY\leq k$ is Zariski-closed (by the vanishing of appropriate Pl\"ucker coordinates), it follows that~\eqref{rank-from-subspace-arrangement} is equivalent to
\begin{equation} \label{rank-generic}
\rho_\XX(b_1,\dots,b_m) = \codim\cap\YY, \quad \text{ for a generic }\YY\in\Gr_\XX(V,\bb).
\end{equation}

In particular, if $\XX$ is a hyperplane arrangement, then $\rho_\XX$ is the rank function of the matroid represented by normals to its hyperplanes, as usual.

\begin{defn} \label{defn:representable}
A U-matroid $(E,\D,\rho)$ is \defterm{representable} (over $\fld$) if it arises from a subspace arrangement by this construction.
(In particular, $\D$ must be a product of chains, and the rank of every join-irreducible element must equal its height in $\D$.)
\end{defn}

The \textit{rank function}\footnote{Note that $\rk$ is \textit{not} the rank function of a submodular system in the sense of Definition~\ref{def:submodsyst}.  We will always use the symbol $\rho$ for the rank function of a submodular system in the sense of Fujishige~\cite{Fujishige}, and $\rk$ for the rank function of a subspace arrangement in the sense of Bj\"orner~\cite{Bjorner-subspace}.}
of a $\cc$-arrangement $\XX=(X_1,\dots,X_m)$ is the function $\rk:2^{[m]}\to\Nn$ given by
\begin{equation}\label{rank-arrangement-polymatroid}
\rk_\XX(A)=\codim\left(\bigcap_{a\in A}X_a\right).
\end{equation}
In~\cite[\S4.1]{Bjorner-subspace}, the rank function is defined on the lattice of intersections of the $X_i$, rather than on the full Boolean lattice, but the two are easily seen to be equivalent.  The function $\rk_\XX$ is a polymatroid rank function~\cite[Example~1.1]{Crowley}.

Let $P$ be an integral polymatroid $P$ on ground set $[m]$. Crowley et al. \cite[\S2.1]{Crowley} construct a matroid $M=\tilde P$
called its \defterm{minimal multisymmetric lift}, as follows.  Let $\tilde E_1,\dots,\tilde E_m$ be disjoint sets with $|E_i|=\rk_P(i)$ and $\tilde E=\tilde E_1\sqcup\cdots\sqcup\tilde E_n$. Define a projection map $\pi:\tilde E\to[n]$ by $\pi^{-1}(i)=
\tilde E_i$.  Then define for $S\subseteq\tilde E$
\begin{equation} \label{rank-lift}
\rk_M(S)=\min\{\rk_P(A)+|S\sm\pi^{-1}(A)|\st A\subseteq E\}
\end{equation}
By \cite[Thm.~2.9]{Crowley}, $\rk_M$ is a matroid rank function, and it is \textit{multisymmetric} in the sense that $\rk_M(S)$ depends only on the tuple $(|S\cap\tilde E_1|,\dots,|S\cap\tilde E_m|)$.  If $\rk_P=\rk_\XX$ for a subspace arrangement $\XX=(X_1,\dots,X_m)$, then $M$ is the matroid associated with any hyperplane arrangement obtained by replacing each $X_i$ by $c_i$ generic hyperplanes, each containing $X_i$ \cite[Example~2.10]{Crowley}.

\begin{prop}
Let $\cc=(c_1,\dots,c_m)$ and let $\XX$ be a $\cc$-arrangement.  The following objects all contain the same information:
\begin{enumerate}
\item The intersection lattice and rank function in the sense of Bj\"orner \cite{Bjorner-subspace};
\item The polymatroid rank function $\rk_\XX$ defined by~\eqref{rank-arrangement-polymatroid};
\item The minimal multisymmetric lift $\rk_M$ defined by~\eqref{rank-lift};
\item The poset matroid (or U-matroid) rank function $\rho_\XX$ defined by~\eqref{rank-generic}.
\end{enumerate}
\end{prop}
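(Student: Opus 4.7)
The plan is to exhibit explicit mutual conversions between the four objects.

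The equivalences $(1)\leftrightarrow(2)$ and $(2)\leftrightarrow(3)$ are essentially formal. For $(1)\leftrightarrow(2)$, the polymatroid rank function $\rk_\XX$ determines Bj\"orner's intersection lattice since $\bigcap_{a\in A}X_a = \bigcap_{b\in B}X_b$ if and only if $\rk_\XX(A)=\rk_\XX(A\cup B)=\rk_\XX(B)$, and the rank (codimension) of each lattice element is $\rk_\XX$ of any defining set; conversely, $\rk_\XX(A)$ is by definition the codimension of $\bigcap_{a\in A}X_a$. For $(2)\leftrightarrow(3)$, formula~\eqref{rank-lift} exhibits $\rk_M$ as an explicit function of $\rk_P=\rk_\XX$, and in the reverse direction I would verify that $\rk_\XX(A) = \rk_M(\pi^{-1}(A))$ by showing that the minimum in~\eqref{rank-lift} at $S=\pi^{-1}(A)$ is attained at $A'=A$; this uses the bound $\rk_P(A)\leq \rk_P(A')+\sum_{i\in A\setminus A'}c_i$, which follows from monotonicity and subadditivity of $\rk_P$ together with $\rk_P(\{i\})\leq c_i$.

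The substantive content lies in $(3)\leftrightarrow(4)$. The direction $(4)\Rightarrow(2)$ is immediate: substituting $b_i=c_i$ for $i\in A$ and $b_i=0$ for $i\notin A$ into~\eqref{rank-from-subspace-arrangement} forces $Y_i=X_i$ and $Y_i=V$ respectively, so $\rk_\XX(A)=\rho_\XX(\bb_A)$, where $\bb_A$ is the corresponding tuple in $\D_\cc$. For the reverse direction, the plan is to show that $\rho_\XX$ coincides with the restriction of $\rk_M$ to $\D_\cc$ under the natural identification $\bb \leftrightarrow S$ with $|S\cap\tilde E_i|=b_i$, which is well-defined by multisymmetry of $\rk_M$. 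By \cite[Example~2.10]{Crowley}, $M$ is the matroid of the hyperplane arrangement $\tilde\XX=\{H_{i,j}\}$ obtained by replacing each $X_i$ with $c_i$ generic hyperplanes $H_{i,1},\dots,H_{i,c_i}$ containing $X_i$. For generic such choices, $Y_i := H_{i,1}\cap\cdots\cap H_{i,b_i}$ is itself a generic codim-$b_i$ subspace of $V$ containing $X_i$, so
\[\rho_\XX(\bb) = \codim \bigcap_i Y_i = \rk_M(S_0), \qquad S_0 := \{H_{i,j} : 1\leq i\leq m,\ 1\leq j\leq b_i\}.\]
By multisymmetry, $\rk_M(S_0) = \rk_M(S)$ for any $S$ with $|S\cap\tilde E_i|=b_i$. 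Since $\D_\cc$ provides a system of orbit representatives for the action of $\Sym_{c_1}\times\cdots\times\Sym_{c_m}$ on $2^{\tilde E}$, the restriction of $\rk_M$ to $\D_\cc$ recovers all of $\rk_M$, closing the loop.

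The main obstacle I anticipate is coordinating the two generic-position hypotheses: a Zariski-generic $\YY \in \Gr_\XX(V,\bb)$ must correspond to a Zariski-generic choice of defining hyperplanes $H_{i,j}\supseteq X_i$. This reduces to showing that the intersection map sending a tuple of hyperplanes containing $X_i$ to their common intersection is dominant onto $\Gr^{b_i}(V/X_i)$, which follows from irreducibility of both parameter varieties together with the assumption that $\fld$ is infinite. Once this dominance is established, the generic codimension of $\bigcap_i Y_i$ computed via~\eqref{rank-generic} agrees with the matroid rank $\rk_M(S)$ on a dense open subset of the combined parameter space, completing the identification.
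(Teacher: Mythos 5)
Your proposal is correct and follows essentially the same route as the paper: the equivalences $(1)\leftrightarrow(2)$ and $(2)\leftrightarrow(3)$ are treated as formal consequences of the definitions and of~\eqref{rank-lift}, and the key identification of $\rho_\XX$ with the restriction of $\rk_M$ to $\D_\cc$ rests on the generic-hyperplane realization of the minimal multisymmetric lift from Crowley et al. You supply more detail than the paper does (the verification that the minimum in~\eqref{rank-lift} is attained at $A'=A$, and the dominance argument coordinating the two genericity hypotheses), but the decomposition and the key citation are the same.
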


\begin{proof}
As we have observed, the function $\rk_\XX$ is equivalent to the intersection lattice and rank function in Bj\"orner's sense.
The minimal multisymmetric lift $M=\tilde P$ depends only on $\rk_P$, and conversely one can show (using the fact that $\rk_P$ is not an arbitrary poset matroid, but arises from a subspace arrangement) that \eqref{rank-lift} specializes to $\rk_M(\bigcup_{a\in A}\tilde E_a)=\rk_P(A)$ for all $A\subseteq[m]$.

The U-matroid of an arrangement determines its polymatroid, since $\rk_\XX$ is just the restriction of $\rho_\XX$ to the Boolean sublattice $\{0,c_1\}\x\cdots\x\{0,c_m\}\subseteq\D_\cc$.
On the other hand, by \cite[Ex.~2.10]{Crowley}, $\rho_\XX$ is just the restriction of $\rk_M$ to $\D_\cc$, regarding the latter as a sublattice of $2^{\tilde E}$.  (Indeed, this is a lattice restriction in the sense of Definition~\ref{defn:lattice-restriction}.)
More explicitly,
\[\rho_\XX(b_1,\dots,b_m)=\rk_M(A_1\sqcup\cdots\sqcup A_m)\]
for any sets $A_1\in\binom{\tilde E_1}{b_1},\dots,A_m\in\binom{\tilde E_m}{b_m}$.
\end{proof}

\subsection{Generous extensions revisited} \label{sec:en-again}

If the U-matroid of a subspace arrangement contains the same information as these other constructions, how is it useful?  One answer to this question is that, under suitable conditions, generous atom extension of U-matroids corresponds to a small intermediate step in the geometric construction of the minimal multisymmetric lift, so that the generous matroid extension is precisely the minimal multisymmetric lift itself.  We begin with some examples of generous extensions of poset matroids.

\begin{example}
Let $\XX$ be the $3$-equal subspace arrangement in $\Rr^4$, consisting of the four planes defined by the equations $x_i=x_j=x_k$, where $1\leq i<j<k\leq 4$.  In the corresponding poset matroid, the characteristic lattice $\D$ is a product of four chains of length two; equivalently, the characteristic poset has ground set $[8]$ and relations $1<2,3<4,5<6,7<8$. The generous extension of the corresponding poset matroid is the uniform matroid $U_3(8)$.  This matroid is representable by a hyperplane arrangement consisting of eight generic hyperplanes.  Geometrically, each of the four planes $P\in\XX$ has been replaced with two generic hyperplanes whose intersection is $P$.
\end{example}

\begin{thm} \label{generous-geometry}
Let $\XX=(X_1,\dots,X_m)\in\Gr(V,\cc)$ as above.  Assume $c_m\geq2$.  Let $\cc'=(c'_1,\dots,c'_{m+1})=(c_1,\dots,c_{m-1},c_m-1,1)$, and let
$\XX'=(X_1,\dots,X_{m-1},X'_m,H)\in\Gr(V,\cc')$, where $X'_m,H$ are chosen generically so that $\codim X'_m=c_m-1$, $\codim H=1$, and $X'_m\cap H=X_m$.  Let $\rho=\rho_{\XX}$ and $\rho'=\rho_{\XX'}$, as in~\eqref{rank-from-subspace-arrangement}.  Then $\rho'$ is the generous atom extension of $\rho$ by the atom $n$.
\end{thm}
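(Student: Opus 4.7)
Proof plan. The approach is to verify directly that $\rho'$ agrees with $\rho_a$ on each of the three cases in Definition~\ref{defn:generous-atom}. First I identify the atom $a$: the characteristic lattice $\D=[0,c_1]\x\cdots\x[0,c_m]$ of $\rho$ has join-irreducibles forming a disjoint union of chains of lengths $c_1,\dots,c_m$, and $\D'=[0,c_1]\x\cdots\x[0,c_{m-1}]\x[0,c_m-1]\x[0,1]$ is obtained from $\D$ by breaking off the top element of the $m$th chain as an isolated atom. Calling this atom $a$, we have $\D'=\D[a]$ in the sense of~\eqref{define-Da}.

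For case~\eqref{genX:1}, I need $\rho'(S)=\rho(S)$ when $S\in\D$, i.e., for tuples $(b_1,\dots,b_{m-1},b_m,0)$. Since $Y'_{m+1}=V$ when $b_{m+1}=0$, it suffices to show that a generic $Y'_m\supset X'_m$ of codimension $b_m$ is also a generic codimension-$b_m$ subspace containing $X_m$. This follows from the surjectivity of the natural forgetful map from the incidence variety $\{(X'_m,Y'_m)\st X_m\subset X'_m\subset Y'_m,\ \codim X'_m=c_m-1,\ \codim Y'_m=b_m\}$ onto $\{Y_m\st X_m\subset Y_m,\ \codim Y_m=b_m\}$, together with the irreducibility of both varieties; the fibers are Grassmannians $\Gr^{c_m-1-b_m}(Y_m/X_m)$, which are nonempty because $b_m\leq c_m-1$.

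For cases~\eqref{genX:2}--\eqref{genX:3}, fix $S=(b_1,\dots,b_{m-1},b_m,1)\in\D'\sm\D$ and set $W=Y_1\cap\cdots\cap Y_{m-1}\cap Y'_m$ for generic $Y_i\supset X_i$ and $Y'_m\supset X'_m$. By case~\eqref{genX:1}, $\codim W=\rho(S-a)$, so $\rho'(S)=\codim(W\cap H)$ equals $\codim W$ if $W\subseteq H$ and $\codim W+1$ otherwise. Since $H$ ranges over hyperplanes through $X_m$, which correspond to nonzero elements of $(V/X_m)^*$, a generic such $H$ contains $W$ if and only if the image of $W$ in $V/X_m$ vanishes, i.e., $W\subseteq X_m$.

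The remaining step, and the main obstacle, is to show that $W\subseteq X_m$ is equivalent to the rank equality $\rho(S-a)=\rho(\supn_\D(S))$. Because $X_m\subseteq Y'_m$, containment $W\subseteq X_m$ forces $W=Y_1\cap\cdots\cap Y_{m-1}\cap X_m$, whose generic codimension is $\rho_\XX(b_1,\dots,b_{m-1},c_m)=\rho(\supn_\D(S))$; conversely, if this rank equality holds, then $W$ and $Y_1\cap\cdots\cap Y_{m-1}\cap X_m\supseteq W$ share the same codimension and so coincide, giving $W\subseteq X_m$. Combining with the previous paragraph recovers case~\eqref{genX:2} exactly when the rank equality holds and case~\eqref{genX:3} otherwise, matching Definition~\ref{defn:generous-atom}. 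Throughout, the subtlety is maintaining coherence between the ``fixed generic'' data $X'_m,H$ used in defining $\XX'$ and the ``varying generic'' $Y_i,Y'_m$ used to evaluate $\rho'(S)$, which requires invoking irreducibility of the relevant incidence varieties repeatedly.
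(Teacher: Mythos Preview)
Your approach is essentially the same as the paper's, and your handling of the genericity issues via incidence varieties is if anything more careful than the paper's informal ``no more restrictive'' argument.  However, there is one genuine gap in your case breakdown.

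You write that $S\in\D$ means ``tuples $(b_1,\dots,b_{m-1},b_m,0)$''.  This is incomplete: the element $(b_1,\dots,b_{m-1},c_m-1,1)\in\D'$ also lies in $\D$, since it corresponds to the order ideal $(b_1,\dots,b_{m-1},c_m)$ in the original chain-product lattice.  (In terms of $\P$: once you include the new atom $a=n$ together with all of the first $c_m-1$ elements of the $m$th chain, you have recovered the full $m$th chain, which is an order ideal of $\P$.)  Your argument for case~\eqref{genX:1} handles only $b_{m+1}=0$, and your argument for cases~\eqref{genX:2}--\eqref{genX:3} explicitly assumes $S\in\D'\sm\D$, i.e., $b_m<c_m-1$.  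So the sub-case $b_m=c_m-1$, $b_{m+1}=1$ is never addressed.

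This is precisely the paper's Case~2, and it needs its own short argument: when $b_m=c_m-1$ one has $Y'_m=X'_m$, so $Y'_m\cap Y'_{m+1}=X'_m\cap H=X_m$, whence
\[
\rho'(\bb')=\codim(Y_1\cap\cdots\cap Y_{m-1}\cap X_m)=\rho_\XX(b_1,\dots,b_{m-1},c_m)=\rho(S),
\]
as required.  Once you insert this paragraph, your proof is complete and matches the paper's.
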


We first make some observations about the poset matroids $([n],\D,\rho)$ and $([n],\D',\rho')$ arising from~$\XX$ and~$\XX'$.   We can identify the lattices $\D$ and $\D'=\D[n]$ with $[0,c_1]\x\cdots\x[0,c_m]$ and $[0,c_1]\x\cdots\x[0,c'_{m+1}]$ respectively.  The poset $\P'=\Irr(\D')$ is obtained from $\P=\Irr(\D)$ by deleting all the relations of the form $b<n$, so that $n$ has no relations in $\P'$.  Any generous atom extension of a poset matroid over an element $e\in E$ which is maximal in $\P$ may be realized in this way.  (The generous atom extension by an element $e\in E$ which is not maximal in $\P$ is defined over a distributive lattice $\D[e]$ that is not a product of chains, and hence does not correspond to a poset matroid.)

\begin{proof}[Proof of Theorem~\ref{generous-geometry}]
Let $\bb'=(b_1,\dots,b_{m+1})\in\D'$, so that $\bb=(b_1,\dots,b_m)\in\D$.  We must show that
\begin{numcases}{\rho'(\bb')=}
\rho(b_1,\dots,b_{m-1},b_m+b_{m+1}) & \text{ if $b_m=c_m-1$ or $b_{m+1}=0$}, \label{genhyp:1}\\
\rho(\bb) & \text{ if $b_m<c_m-1$, $b_{m+1}=1$, and $\rho(\bb)=\rho(b_1,\dots,b_{m-1},c_m)$}, \label{genhyp:2}\\
\rho(\bb)+1 & \text{ if $b_m<c_m-1$, $b_{m+1}=1$, and $\rho(\bb)<\rho(b_1,\dots,b_{m-1},c_m)$}. \label{genhyp:3}
\end{numcases}
which is equivalent to the formula of Definition~\ref{defn:generous-atom}.  By~\eqref{rank-generic}, we know that
\begin{align*}
\rho(\bb) &= \codim(Y_1\cap\cdots\cap Y_m),\\
\rho'(\bb') &= \codim(Y'_1\cap\cdots\cap Y'_{m+1}),
\end{align*}
for generically chosen $Y_i\in\Gr_{X_i}(V,b_i)$ and $Y'_i\in\Gr_{X'_i}(V,b_i)$.  For $1\leq i\leq m-1$, we know that $X_i=X'_i$ and thus may assume $Y_i=Y'_i$.  Moreover, we may assume $Y_m=Y'_m$, since $X'_m$ was chosen generically from all spaces containing $X_m$ as a hyperplane, so the requirement $Y'_m\supseteq X'_m$ is no more restrictive than the requirement $Y_m\supseteq X_m$.  Hence we may rewrite the previous formulas as
\begin{equation} \label{rhobb}
\begin{aligned}
\rho(\bb) &= \codim(Y_1\cap\cdots\cap Y_m), \\
\rho'(\bb') &= \codim((Y_1\cap\cdots\cap Y_m)\cap Y'_{m+1}).
\end{aligned}
\end{equation}

We now consider various cases.
\medskip

\noindent\underline{\textbf{Case 1: $b_{m+1}=0$.}}
Then $Y'_{m+1}=V$, and by \eqref{rhobb} we have $\rho(\bb)=\rho'(\bb')$, which is consistent with~\eqref{genhyp:1}.
\medskip

\noindent\underline{\textbf{Case 2: $b_{m+1}=1$ and $b_m=c_m-1$.}}
Then $Y'_{m+1}=H$ and the desired equation~\eqref{genhyp:1} reduces to $\rho'(\bb')=\rho(b_1,\dots,b_{m-1},c_m)$.
In the formula for $\rho(\bb')$ in~\eqref{rhobb},
the space $Y_m$ is a generic element of $\Gr_{X_m}(V,b_m)=\Gr_{X_m}(V,c_m-1)$ and 
$H$ is a generic element of $\Gr_{X_m}(V,1)$, so $Y_m\cap H$ contains $X_m$ and has codimension $c_m$.  But then in fact $Y_m\cap H=X_m$, so~\eqref{rhobb} becomes
\[
\rho'(\bb') = \codim(Y_1\cap\cdots\cap Y_{m-1}\cap X_m)=\rho(b_1,\dots,b_{m-1},c_m)
\]
as desired.
\medskip

\noindent\underline{\textbf{Case 3: $b_{m+1}=1$ and $b_m<c_m-1$.}}
Again, $Y'_{m+1}=H$.  Since $H$ is generic, \eqref{rhobb} says that
\begin{equation} \label{rhobb:new}
\rho'(\bb')=\begin{cases}
\rho(\bb) &\text{ if } H\supseteq Y_1\cap\cdots\cap Y_m,\\
\rho(\bb)+1 &\text{ otherwise.}
\end{cases}
\end{equation}
Meanwhile,
\begin{align*}
H&\supseteq Y_1\cap\cdots\cap Y_m \text{ for generic } \YY=(Y_1,\dots,Y_m)\in\Gr_{\XX}(V,\bb)\\
&\iff X_m\supseteq Y_1\cap\cdots\cap Y_m \text{ for generic } \YY\in\Gr_{\XX}(V,\bb)\\
&\iff X_m\supseteq Y_1\cap\cdots\cap Y_m \text{ for all } \YY\in\Gr_{\XX}(V,\bb)
\intertext{(since the containment condition is Zariski closed)}
&\iff X_m\cap(Y_1\cap\cdots\cap Y_m)=Y_1\cap\cdots\cap Y_m \text{ for all } \YY\in\Gr_{\XX}(V,\bb)\\
&\iff Y_1\cap\cdots\cap Y_{m-1}\cap X_m=Y_1\cap\cdots\cap Y_m \text{ for all } \YY\in\Gr_{\XX}(V,\bb)\\
&\iff \codim(Y_1\cap\cdots\cap Y_{m-1}\cap X_m)=\codim(Y_1\cap\cdots\cap Y_m) \text{ for all } \YY\in\Gr_{\XX}(V,\bb)\\
&\iff \rho(b_1,\dots,b_{m-1},c_m)=\rho(b_1,\dots,b_m).
\end{align*}
Combining this observation with~\eqref{rhobb:new} gives
\begin{equation} \label{rhobb:newer}
\rho'(\bb')=\begin{cases}
\rho(\bb) &\text{ if } \rho(b_1,\dots,b_{m-1},c_m)=\rho(b_1,\dots,b_m),\\
\rho(\bb)+1 &\text{ if } \rho(b_1,\dots,b_{m-1},c_m)>\rho(b_1,\dots,b_m)
\end{cases}
\end{equation}
which establishes cases \eqref{genhyp:2} and \eqref{genhyp:3}.
\end{proof}

\begin{cor}
If $\XX$ is a subspace arrangement with U-matroid $U$, then the minimal multisymmetric lift of $U$ (in the sense of Crowley et al.~\cite{Crowley}) is precisely the generous matroid extension.  In particular, the rank function of the generous matroid extension is given in closed form by~\eqref{rank-lift}, solving Problem~\ref{prob:rank-generous} for the special case of representable U-matroids.
\end{cor}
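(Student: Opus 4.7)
The plan is to realize $\hat U$ as the U-matroid of a hyperplane arrangement obtained by iteratively splitting the subspaces of $\XX$, using Theorem~\ref{generous-geometry} as the bridge between the combinatorial operation (generous atom extension) and its geometric counterpart (generic splitting of a subspace).  By Theorem~\ref{generous-dominates-full}(2), we may perform the $n-m$ generous atom extensions required to reach $2^{[n]}$ in any order.  I would choose at each stage an element of $[n]$ that is currently maximal in the characteristic poset and is the top of a chain of length $\geq 2$.  Such an element exists whenever some chain has length $\geq 2$, so the process terminates after $n-m$ steps with a matroid.

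At each stage, Theorem~\ref{generous-geometry} (after relabeling the ground set and subspaces so that the element being split off plays the role of $n$ and the affected chain plays the role of the $m$-th) tells us that the generous atom extension corresponds to replacing a subspace $X$ of codimension $c \geq 2$ by a pair $(X',H)$ with $\codim X'=c-1$, $H$ a generic hyperplane, and $X'\cap H=X$.  Iterating, I would obtain an arrangement $\tilde\XX$ of $n$ hyperplanes in which, for each $i$, the $c_i$ hyperplanes arising from $X_i$ have intersection exactly $X_i$.  Modulo the standard Zariski density argument that the successive generic choices combine into a globally generic arrangement (each individual genericity condition is Zariski open and nonempty in the relevant Grassmannian, and a finite intersection of such subsets remains nonempty and open), $\tilde\XX$ is precisely the arrangement Crowley et al.~\cite[Ex.~2.10]{Crowley} use to realize the minimal multisymmetric lift $M$.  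The U-matroid of $\tilde\XX$ is thus equal on one side to $\hat U$ (by iterated Theorem~\ref{generous-geometry}, together with Theorem~\ref{generous-dominates-full}) and on the other to $M$, yielding $\hat U=M$ and the closed-form formula~\eqref{rank-lift}, resolving Problem~\ref{prob:rank-generous} in the representable case.

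The main obstacle is the relabeling needed to invoke Theorem~\ref{generous-geometry} at intermediate stages: the theorem is stated only for splitting the $m$-th subspace and adding the atom $n$, whereas we want to split arbitrary subspaces and add arbitrary maximal atoms of the current poset.  Fortunately all the constructions involved (generous atom extension, the geometric splitting, and the passage between a subspace arrangement and its U-matroid) are equivariant under permutations of ground-set labels and of subspace indices, so the relabeling is harmless; this should be noted but does not require a substantive argument.  A secondary, routine point is the Zariski density argument for compatibility of successive generic choices, which we have sketched above.
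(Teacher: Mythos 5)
Your proposal is correct and follows exactly the argument the paper intends (the corollary is left proofless there, with the iteration of Theorem~\ref{generous-geometry} plus the identification with \cite[Example~2.10]{Crowley} sketched in the introduction to Section~\ref{sec:PMatroid}): iterate the generous atom extension over maximal elements of the current characteristic poset, use Theorem~\ref{generous-dominates-full}(2) for order-independence, and match the resulting generic hyperplane arrangement with the one realizing the minimal multisymmetric lift. Your explicit attention to the relabeling and to the compatibility of successive generic choices fills in details the paper leaves implicit, but the route is the same.
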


\begin{cor}
If $\fld$ is an infinite field and $U$ is a $\fld$-representable U-matroid, then so is every iterated generous atom extension of $U$; in particular, so is the generous matroid extension.
\end{cor}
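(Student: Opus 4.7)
The plan is to induct on $|E\sm\Atom(\D)|$, applying Theorem~\ref{generous-geometry} to produce an explicit subspace arrangement at each step. The base case $E\sm\Atom(\D)=\0$ is immediate: in this case $\D=2^E$, so $U$ is already a matroid and there are no further atom extensions to perform.

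For the inductive step, representability forces $\D\isom[0,c_1]\x\cdots\x[0,c_m]$ and $\P=\Irr(\D)$ to be a disjoint union of chains $C_1,\dots,C_m$ of lengths $c_i$; the non-atoms of $\D$ correspond to the non-minimal elements of $\P$, which are precisely the elements at heights $\geq 2$ within these chains. Thus if $E\sm\Atom(\D)\neq\0$, at least one chain has length $\geq 2$; let $a$ be its maximum element. After relabeling so that this chain is $C_m$, the hypothesis $c_m\geq 2$ of Theorem~\ref{generous-geometry} is satisfied, and the theorem produces an explicit subspace arrangement $\XX'$ whose associated U-matroid has rank function equal to the generous atom extension of $\rho$ by $a$. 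The new characteristic lattice $\D[a]\isom[0,c_1]\x\cdots\x[0,c_m-1]\x[0,1]$ is still a product of chains, so the extension is representable, and $|E\sm\Atom(\D[a])|=|E\sm\Atom(\D)|-1$, allowing the inductive hypothesis to be applied.

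Iterating produces a sequence of representable U-matroids, each a generous atom extension of the previous, terminating when the characteristic lattice reaches $2^E$. By the order-independence of iterated generous atom extension (Theorem~\ref{generous-dominates-full}(2)), the resulting matroid is the generous matroid extension of $U$, which is therefore $\fld$-representable. Each intermediate U-matroid along this iteration is representable as well, giving the first claim of the corollary.

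The main (and essentially only) subtlety is the choice of extension order. Theorem~\ref{generous-geometry} applies only when the atom $a$ being adjoined is currently maximal in $\P$ (the top of a chain); adjoining an element from the interior of a chain would produce a lattice that is no longer a product of chains and hence falls outside Definition~\ref{defn:representable}. The argument sidesteps this by always extending at a currently-maximal non-atom, which exists whenever $E\sm\Atom(\D)\neq\0$; the order-independence of the full iteration ensures that this choice still produces the unique generous matroid extension.
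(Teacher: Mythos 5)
Your proof is correct and is exactly the argument the paper intends: the corollary is stated without proof as an immediate consequence of Theorem~\ref{generous-geometry} together with the preceding remark that any generous atom extension at a maximal element of $\P$ is realized by the $(X'_m,H)$ construction, and your induction (always extending at a currently-maximal non-atom so the lattice stays a product of chains, then invoking order-independence from Theorem~\ref{generous-dominates-full}) fills in precisely those details. Your closing observation about the choice of extension order is also the right reading of the statement, matching the paper's own caveat that extensions at non-maximal elements leave the class of poset matroids.
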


The requirement $|\fld|=\infty$ cannot be dropped, for the following reason.

\begin{example} \label{ex:field-matters}
The stalactite of Example~\ref{ex:stalactite} is a poset matroid realizable over any field~$\fld$, arising from a $(1,1,2)$-arrangement in $\fld^2$ consisting of two distinct lines and the origin. However, its generous matroid extension is $U_2(4)$, which is representable over $\Rr$ but not over $\Ff_2$.
\end{example}

\begin{qn}
More generally, is every generous extension of a $\fld$-representable poset matroid $U$ to a product of chains (i) a poset matroid; (ii) if so, representable?  (If the answer to both questions is yes, then we could hope to understand the relationship between a subspace arrangement representing $U$ and an arrangement giving rise to the generous extension, which should be some generalization of Theorem~\ref{generous-geometry}.)
\end{qn}

\begin{qn}
Related question: Under what conditions is the restriction of a representable poset matroid to a lattice which is a product of chains representable?
This question appears to be difficult in light of the following examples.
\end{qn}

\begin{example}
Let $\XX$ consist of two 2-planes in $\fld$-space that meet in a line, and let $U=(E,\D,\rho)$ be the corresponding poset matroid.  Identifying $\D$ with $[0,2]\x[0,2]$, we have $\rho(0,0)=0$ and $\rho(2,2)=3$.  In particular, we cannot obtain a representable poset matroid by restricting to any maximal subchain.
\end{example}

More generally, let $\XX=(X_1,\dots,X_m)$ be a $\cc$-arrangement in $V$ and $U=(E,\D,\rho)$ its corresponding poset matroid.  As in \S\ref{sec:arr}, the lattice $\D$ may be identified with $[0,c_i]\x\cdots\x[0,c_m]$, and $\rho=\rho_\XX$ is defined by~\eqref{rank-from-subspace-arrangement}.  Let $C$ be any maximal chain in $[0,c_1]\x[0,c_2]$, say $C=\{(p_i,q_i)\st0\leq i\leq c_1+c_2\}$ with $\rho(p_i,q_i)=i$.  Then $\D'=C\x[0,c_3]\x\cdots\x[0,c_m]\isom[0,c_1+c_2]\x[0,c_3]\x\cdots\x[0,c_m]$ is an accessible distributive sublattice of $\D$.  It is easy to show that $U'=(E,\D',\rho|_{\D'})$ is representable only if $X_1,X_2$ intersect transversely, i.e., if $\rho(c_1,c_2,0,\dots,0)=c_1+c_2$.  One might hope that in this case, the subspace arrangement represented by $U'$ is obtained from $\XX$ by replacing the pair $X_1,X_2$ with their transverse intersection $X_1\cap X_2$.  However, this is not the case, as the following examples illustrate.

\begin{example}
Let $V=\fld^4$, with basis $\{\ee_1,\dots,\ee_4\}$, and let $\XX$ be the arrangement consisting of the subspaces
\[X_1=\langle \ee_1,\ee_2\rangle^\perp, \quad X_2=\langle \ee_3,\ee_4\rangle^\perp,\quad X_3=\langle \ee_1,\ee_3\rangle^\perp.\]
In particular, $X_1$ and $X_2$ intersect transversely.
The corresponding poset matroid has characteristic lattice $[0,2]\x[0,2]\x[0,2]$ and rank function
\[\rho(\bb) = \begin{cases} 3 & \text{ if } \bb=(2,0,2), \\ \min(a+b+c,4) & \text{ otherwise.}\end{cases}\]
If $C$ is a maximal chain in $[0,2]\x[0,2]$ that contains the element $(1,1)$, then $\rho|_{C\x[0,2]}$ is represented by the arrangement $(X_1\cap X_2,X_3)$, as one would hope.  On the other hand, if $C$ is any other maximal chain, then $\rho|_{C\x[0,2]}$ is not representable.
\end{example}

\begin{example}
Let $V=\fld^6$, with basis $\{\ee_1,\dots,\ee_6\}$, and let $\XX$ be the arrangement consisting of the subspaces
\[
X_1 = \langle \ee_1,\ee_2,\ee_3\rangle^\perp, \quad
X_2 = \langle \ee_4,\ee_5,\ee_6\rangle^\perp, \quad
X_3 = \langle \ee_1,\ee_2,\ee_6\rangle^\perp, \quad
X_4 = \langle \ee_3,\ee_4,\ee_5\rangle^\perp.
\]
Let $U$ be the corresponding poset matroid, with $\D=[0,3]^4$ and rank function $\rho$.
Again $X_1$ and $X_2$ meet transversely, but now there exists \textit{no} maximal chain $C\subseteq[0,3]\x[0,3]$ such that $\D_C=C\x[0,3]\x[0,3]$ represents the arrangement $\XX'=(X_1\cap X_2,X_3,X_4)$.  Sixteen of the twenty possibilities for $C$ give rise to non-representable matroids.  The remaining four are representable by an arrangement consisting of the zero space and two 3-spaces that intersect in a line.  However, we do not see how to obtain such an arrangement naturally from $\XX$ by first replacing $X_1,X_2$ with $X_1\cap X_2=0$, because it then seems impossible to retain the information carried by $\XX$ about the numbers $\codim(X_i\cap X_j)$ for $i=1,2$ and $j=3,4$.
\end{example}

\section{Further questions} \label{sec:further}

\begin{enumerate}
\item In keeping with Rota's program, one could generalize other matroidal concepts to U-matroids: loops and coloops, deletion/contraction, basis exchange properties, circuits, the Tutte polynomial, internal and external activities, the broken-circuit complex, etc.  In matroid theory, the last two rely on a choice of ordering of the ground set, but exhibit properties independent of the choice of ordering.  Here, results such as Theorem~\ref{thm:basis-theorem} suggest that the right orderings to consider are the linear extensions of~$\P$. Circuits may be a particularly important axiomatization in light of their central role in the Orlik-Solomon algebra of a matroid, as described in the next problem.

\item 
Develop a theory of \textit{unbounded oriented matroids}, with a view toward the important problem of understanding the topology of a subspace arrangement complements in terms of its combinatorics.  For hyperplane arrangements over~$\Cc$, the cohomology ring was famously described by Orlik and Solomon~\cite{OS}, and for real subspace arrangements $\XX\subseteq\Rr^d$, Goresky and McPherson~\cite{GM} computed the homology using the intersection lattice together with the rank function.   Feichtner and Ziegler \cite{FZ} extended the Orlik--Solomon construction to complex subspace arrangements with geometric intersection lattice (a very strong condition), and gave a conjecture, later proven by de~Longueville and Schultz~\cite{deLSch} for the more general class of real arrangements in which the codimensions of any two related elements of the intersection lattice differ by 2 or more.  The construction of \cite{deLSch} enriches the combinatorics of the intersection lattice by choosing an orientation of each subspace, motivating the question of defining orientations for unbounded matroids (particularly for poset matroids).

\item In analogy with the definition of representability of a P-matroid, say that a U-matroid $M=([n],\D,\rho)$ is \defterm{represented} by a collection of vectors $\{v_1,\dots,v_n\}$ if $\rho(A)=\dim\textup{Span}\{v_i\st i\in A\}$ for all $A\in\D$. Note that this is weaker than the notion of representability of P-matroids considered in Section~\ref{sec:PMatroid}.
Representable matroids give a stratification of the Grassmannian, wherein an element of the Grassmannian (realized as a matrix $A$) is associated to the matroid whose bases are the nonvanishing Pl\"ucker coordinates of $A$. Using the above notion of representability, we may consider the U-matroid analogue of this picture by associating a point of the Grassmannian to the U-matroid whose bases are the nonvanishing Pl\"ucker coordinates which correspond to elements of $\D$. This gives a U-matroid stratification of the Grassmannian which is a coarsening of the matroid stratification. One may also consider the U-analogue of positroids and the totally nonnegative Grassmannian by considering a point in the Grassmannian to be $\D$-nonnegative if it may be represented by a matrix all of whose nonvanishing maximal minors corresponding to elements of $\D$ are positive.

\item How do geometric invariants of the base polyhedron correspond to combinatorial invariants of the rank function?  For example, if $M$ is a matroid on $n$ elements, then $\dim\BBB(M)$ equals $n$ minus the number of connected components of~$M$.  What is the analogous combinatorial description of $\dim\conv(\B(U))$ for a U-matroid $U$?

\item Recall that $\BBB(\hat A)=\BBB(A)\cap[0,1]^n$ (Theorem~\ref{generous-polytopes}).  For each basis $B$ of $\hat A$, let $f(B)$ be the smallest number $k$ such that the characteristic vector of $B$ belongs to the $k$-skeleton of $\BBB(A)$.  What does the number $f(B)$ say about $B$?  What does the generating function $\sum_B x^{f(B)}$ say about $A$?  (Note that these questions have trivial answers if $A$ is itself a matroid.)

\item Submodular systems are studied for their applications in optimization \cite{Fujishige}.  Do U-matroids arise from concrete optimization problems?
\end{enumerate}

\bibliographystyle{abbrv}
\bibliography{biblio}
\end{document}